\documentclass[12pt]{amsart}
\usepackage{times}
\usepackage[T1]{fontenc}
\usepackage{dsfont}
\usepackage{mathrsfs}
\usepackage[colorlinks]{hyperref}
\usepackage{xcolor}
\usepackage[a4paper,asymmetric]{geometry}
\usepackage{mathscinet}
\usepackage{fullpage}
\usepackage{latexsym}
\usepackage{amsthm}
\usepackage{amssymb}
\usepackage{amsfonts}
\usepackage{amsmath}
\newtheorem{theorem}{Theorem}[section]
\newtheorem{thm}[theorem]{Theorem}

\newtheorem{lemma}[theorem]{Lemma}
\newtheorem{lem}[theorem]{Lemma}
\newtheorem{proposition}[theorem]{Proposition}
\newtheorem{prop}[theorem]{Proposition}
\newtheorem{corollary}[theorem]{Corollary}

\theoremstyle{definition}
\newtheorem{definition}[theorem]{Definition}
\newtheorem{defn}[theorem]{Definition}

\theoremstyle{remark}
\newtheorem{remark}[theorem]{Remark}
\newtheorem{rem}[theorem]{Remark}
\numberwithin{equation}{section}

 \DeclareMathAlphabet{\mathpzc}{OT1}{pzc}{m}{it}

  \newcommand{\dif}{\mathrm{d}}

 \newcommand{\norm}[1]{\left\Vert#1\right\Vert}
 
 \newcommand{\E}{\mathbb{E}}            

 \newcommand{\N}{\mathbb{N}}
 \newcommand{\R}{\mathbb{R}}\newcommand{\RR}{\mathbb{R}}
 
 \newcommand{\PP}{\mathbb{P}}

 \newcommand{\Be}{\begin{equation}}
 \newcommand{\Ee}{\end{equation}}
 \newcommand{\Bs}{\begin{split}}
 \newcommand{\Es}{\end{split}}
  \newcommand{\Bes}{\begin{equation*}}
 \newcommand{\Ees}{\end{equation*}}
 \newcommand{\BT}{\begin{thm}}
 \newcommand{\ET}{\end{thm}}
 \newcommand{\Bp}{\begin{proof}}
 \newcommand{\Ep}{\end{proof}}
 \newcommand{\BL}{\begin{lem}}
 \newcommand{\EL}{\end{lem}}
 \newcommand{\BP}{\begin{proposition}}
 \newcommand{\EP}{\end{proposition}}
 \newcommand{\BC}{\begin{corollary}}
 \newcommand{\EC}{\end{corollary}}
 \newcommand{\BR}{\begin{rem}}
 \newcommand{\ER}{\end{rem}}
 \newcommand{\BD}{\begin{defn}}
 \newcommand{\ED}{\end{defn}}
 \newcommand{\BI}{\begin{itemize}}
 \newcommand{\EI}{\end{itemize}}

 \newcommand{\al}{\alpha}
 \newcommand{\de}{\delta}
 \newcommand{\be}{\beta}
 
\def\dko{d_{\mathrm{Kol}}}
\def\dw{d_{\mathrm{W}}}
\def\dwb{d_{\mathrm{W}_\beta}}
\def\dfm{d_{\mathrm{FM}}}

\begin{document}
\title[Approximation of stable law by Stein's method]
{Non-integrable stable approximation by Stein's method}

\author[P. Chen]{Peng Chen}
\author[I. Nourdin]{Ivan Nourdin}
\author[L. Xu]{Lihu Xu}
\author[X. Yang]{Xiaochuan Yang}
\author[R. Zhang]{Rui Zhang}


\begin{abstract} \label{abstract}
We develop Stein's method for $\alpha$-stable approximation with $\alpha\in(0,1]$, continuing the recent line of research by Xu \cite{lihu} and Chen, Nourdin and Xu \cite{C-N-X} in the case $\alpha\in(1,2).$ The main results include an intrinsic upper bound for the error of the approximation in a variant of Wasserstein distance that involves the characterizing differential operators for stable distributions, and an application to the generalized central limit theorem. Due to the lack of first moment for the approximating sequence in the latter result, we appeal to an additional truncation procedure and investigate fine regularity properties of the solution to Stein's equation.
\end{abstract}

\maketitle
\noindent {\bf Key words:} $\alpha$-stable approximation; Stein's method; generalized central limit theorem; rate of convergence; fractional Laplacian; normal attraction;  leave-one-out approach; truncation.\\

\section{Introduction}

\subsection{Overview}
Non-Gaussian stable distributions arise ubiquitously in probabilistic approximation of random phenomenon. The most fundamental example is the generalized central limit theorem for the sum of independent random variables with common distribution that presents heavy tails \cite[p.161]{Dur10}.

Recall that a real-valued random variable $Z$ has a stable distribution if it satisfies the distributional identity: for any $a,b\in\RR$, there exist $c,d\in\RR$, such that $aZ_1+ bZ_2 \overset{d}= cZ+d$ where $Z_1,Z_2$ are independent copies of $Z$. It is called strictly stable, if the above relation holds with $d=0$ for any choice of $a,b$.
In this work, we focus on \emph{non-Gaussian strictly stable laws}, whose definition is given equivalently by their characteristic functions as follows.
\begin{definition}\cite[p.86]{sato}
Let $Z$ be a real-valued random variable. It has the non-Gaussian \emph{strictly} stable distribution if there exists $\al\in(0,2)$, $\sigma>0$, $\de\in[-1,1]$ (and $\tau\in\RR$ when $\al=1$) such that for all $\lambda\in\R$,
    \begin{align}\label{e:stable_exponent}
    \mathbb{E}\big[e^{i\lambda Z}\big]= \begin{cases}
   \exp\big\{-\sigma^\alpha|\lambda|^\alpha(1-i\,\delta\,{\rm sign}(\lambda)\,\tan\frac{\pi\alpha}2)\big\}&\quad\mbox{if $\alpha\in(0,2)\setminus\{1\}$},\\
    \exp\big\{-\sigma|\lambda| + i\tau\lambda \big\}&\quad\mbox{if $\alpha=1$.}
    \end{cases}
    \end{align}
Here $\al$ is called the stability parameter determining the decay of the tail of $Z$; $\sigma$ the scale parameter which is the stable analogue of the standard deviation; $\de$ the skewness parameter describing the asymmetry, and $\tau$ the shift parameter describing the mode.
\end{definition}

\begin{rem}
By a change of variables $Z\mapsto Z/\sigma$ in the case $\al\in(0,2)\setminus\{1\}$ or $Z\mapsto (Z-\tau)/\sigma$ in the case $\al=1$, we can and will assume that $\sigma=1$ and $\tau=0$ throughout the paper. As such, the strictly $1$-stable law is symmetric in that $Z=-Z$ in distribution, and the strictly $\al$-stable law with $\al\neq 1$ is asymmetric unless $\de=0$. With this in mind, we adopt the unified notation $Z\sim S_\al(\de)$ for strictly $\al$-stable distributions, with $\de$ being arbitrary in $[-1,1]$ if $\al\neq 1$ and $\de$ tacitly assumed to be $0$ if $\al=1$, corresponding to the symmetric Cauchy distribution.  In this paper, we exclude the case $\de\in\{1,-1\}$ because the crucial Lemma \ref{p-prop} does not hold in such case, see Remark \ref{r:subordinator}.
\end{rem}



This paper is concerned with the proximity between a random variable and a strictly stable distribution in some appropriately chosen distance, and is along the recent line of research initiated by Xu \cite{lihu}, who proved an upper bound in the Wasserstein distance $d_\mathrm{W}$ between an \emph{integrable} random variable $F$ and  $Z\sim S_\al(0)$ with $1<\al<2$. Recall that $d_\mathrm{W}(F,Z):= \sup_{h\in\mathcal H} |\E[h(F)]-\E[h(Z)]|$ where the supremum runs over all Lipchitz functions with Lipchitz constant at most $1$. Xu showed
\begin{align}\label{e:Xu}
d_{\mathrm{W}}(F,Z) \le \sup_{f\in\mathcal F}|\E[\Delta^{\frac{\alpha}{2}}f(F)]-\frac{1}{\al}\E[Ff'(F)]|,
\end{align}
where $\Delta^{\frac \al 2}$ is the fractional Laplacian normalized in such a way that its Fourier multiplier is $|\lambda|^\al$, and $\mathcal F$ is the class of functions with first and second derivatives bounded by a generic constant depending on $\al$. Later on, Chen, Nourdin and Xu \cite{C-N-X}
showed that a similar estimate holds for asymmetric stable distributions with $1<\al<2$ where the fractional Laplacian was replaced by other relevant non-local operators.  This bound turned out to be effective in giving rates of convergence for the generalized central limit theorem, see \cite[Th. 2.6]{lihu} and \cite[Th. 1.4]{C-N-X}.

The derivation of \eqref{e:Xu} relies on tools and ideas developed by Ch. Stein in the seventieth for \emph{normal approximation}. Stein's idea is so robust that can be naturally extended to other target distributions, Poisson, Gamma, Beta, to name just a few. We refer the reader to the webpage \cite{yvik-web} maintained by Y. Swan for an exhaustive list.  Roughly speaking,  Stein's  method for approximating a probability distribution $\mu$ is composed of three main steps. First, one characterizes $\mu$ by certain ``differential" operator $\mathcal{A},$ namely $\mathbb{E}[\mathcal{A}f(Y)]=0$ for an appropriate class of functions $f$ if and only if $Y$ is distributed according to $\mu$. Second, one establishes and solves a ``differential" equation,
\begin{equation}\label{e:Stein}
\mathcal{A}f(x)=h(x)-\mathbb{E}[h(Y)],
\end{equation}
 called Stein's equation, so that $\mathbb{E}[h(F)]-\mathbb{E}[h(Y)]=\mathbb{E}[\mathcal{A}f(F)]$ for any random variable $F$.   Third, one studies the regularity property of the solution $f$ to Stein's equation with respect to the regularity property of the function $h$, which will be used to estimate the expectation $\E[\mathcal A f(F)]$, thus providing the distance between $F$ and $Y$.
In view of these steps, the bound \eqref{e:Xu} hinges on the fact \cite[Th. 4.1]{lihu} that $\Delta^{\frac \al 2} f(x)  -\frac{1}{\al}xf'(x)$ characterizes a symmetric stable law with $1<\al<2$. Also, the class $\mathcal F$ in the bound captures regularity properties of solutions to Stein's equation \eqref{e:Stein}.

\medskip
The goal of this paper is to carry out Stein's method for $\al$-stable approximation with $0<\al\le 1$, continuing and completing the study of \cite{lihu,C-N-X}. For $Z\sim S_\al(\de)$, one prominent difference between the cases $\al\le 1$ and $\al>1$ is that $\E[|Z|]<\infty$ if and only if $\al>1$. Therefore, in the case $\al\le 1$, the random variable of interest $F$ in the approximation, e.g. sum of independent random variables with common distribution that has non-integrable tails, typically does not possess finite first moment. The usual Wasserstein distance has to be changed in order to obtain meaningful bounds.  In this paper, we make use  of
\begin{align}\label{e:dwb}
d_{\mathrm{W}_\beta}(F,Z):= \sup_{h\in\mathcal H_\beta} |\E[h(F)] - \E[h(Z)]|, \quad \be<\al,
\end{align}
where $\mathcal H_\beta$ is the class of Lipschitz functions $h:\R \to (\R,d_\beta)$ such that $|h(x)-h(y)|\le d_\beta(x,y)$, the real line being endowed with the metric $d_\beta(x,y)=|x-y|\wedge |x-y|^\beta$. It is known \cite[p.18]{ST} that $\E[|Z|^\beta]<\infty$ whenever $\beta<\al$, thus $\dwb$ is an appropriate metric for determining the proximity between  $F$ and \emph{non-integrable} stable laws. The advantage of using $\dwb$ over the (non-smooth) Kolmogorov distance $\dko$ given by
\begin{align*}
\dko(F,Z):=\sup_{x\in\RR}|\PP(F\le x)-\PP(Z\le x)|
\end{align*}
is that the solution $f$ to Stein's equation for $h\in \mathcal H_\be$ possess nice regularity properties in terms of boundedness and H\"older continuity of the (fractional)  derivatives of $f$, which are crucial for obtaining rates of convergence in the generalized CLT. It's worthy to point out that $\dwb$ and $\dko$ satisfy a similar relation to that of $\dw$ and $\dko$, see Corollary \ref{c:SCLT} below.

Another complication  for approximating $Z\sim S_\al(\de)$ with $\al\le 1$ arises in applying a bound like \eqref{e:Xu} to obtain the generalized CLT. The approach of \cite{lihu} or \cite{C-N-X} consists in applying regularity properties of the solution to Stein's equation to obtain a stochastic Taylor-like expansion which, combined with Stein's $K$-function or leave-one-out argument,  yields the desired rates of convergence for the generalized CLT.  Such a program amounts to significant changes in the case $\al\le 1$, e.g. finer (compared to the case $\al>1$) regularity behavior of the solution to Stein's equation has to be established which may have merits on its own, and an additional truncation term has to be handled.

  The discussion around rates of convergence in the generalized CLT has been made by many authors. Using the distance $\dko$, Hall \cite{Hall82} gave two-sided bounds (tight under some assumption), see also \cite{JuPa98} where the same distance was used. In \cite{KuKe00,DaNa02} were discussed the $L^\infty$ or $L^1$ rate of convergence of the density function of the approximating sequence. All these efforts are made upon analysis of the characteristic function of the partial sum. By extending Lindeberg's approach and applying Dynkin's formula, Chen and Xu \cite{C-X} gave a real-variable proof for the generalized CLT with explicit rates. Recently, Arras and Houdr\'e \cite{AH} developped Stein's method for self-decomposable laws with finite first moment. The requirement of finite first moment excludes $\al$-stable distributions with $\al\in(0,1]$. To the best of our knowledge, the present paper is the first dealing with Stein's method for nonintegrable random variables. In view of far-reaching applications of Stein's method  \cite{BaCeXi07,BrDa17,ChaSha11,GoTi06,KuTu11,Fang14,GaPiRe17,Hsu05,NoPe09,bluebook,NoPeSw14,F-S-X},  we believe that the present work opens the way to non-integrable stable approximation in a context where the dependence structure of the random phenomenon of interest is complex, e.g. random graphs, interacting particle systems etc.

\medskip
As already said, throughout the paper, we assume $Z\sim S_\al(\de)$ for $0<\al\le 1$ and $\de\in(-1,1)$, with the convention that $\de=0$ if $\al=1$. We use $c,C$ to denote generic constants, and $c_\al,C_\al$ constants that depend only the subscript, whose value may change in each appearance. All random objects below are defined on some common probability space.

\subsection{Statement of results} \label{s:MThm}

We start with some notation.  
Set for $\de\in(-1,1)$ and $0<\al\le 1$
\begin{align*}
\nu_{\al,\de}(dx)= {d_\al}\left( \frac{1+\de}{2} x^{-1-\al}{\bf 1}_{(0,\infty)}(x) + \frac{1-\de}{2}|x|^{-1-\al}{\bf 1}_{(-\infty,0)}(x) \right)dx ,
\end{align*}
where
\begin{align*}
0<d_\al = \begin{cases}
\frac{-1}{\Gamma(-\al)\cos \frac{\pi\al}{2}} &\mbox{ if } 0<\al<1, \\
 \frac 2{\pi} & \mbox{ if } \al=1.
\end{cases}
\end{align*}
Then, by applying  \cite[Eq. (14.18) and (14.20)]{sato} twice, one has
\begin{align*}
&\int_\RR (e^{i\lambda x} -1) \nu_{\al,\de}(dx) = -|\lambda|^\al \left(1-i\de \rm{sign}(\lambda)\tan \frac{\pi\al}{2}\right), \quad 0<\al<1;\\
&\int_\RR (e^{i\lambda x}-1-i\lambda x{\bf 1}_{|x|<1}) \nu_{1,0}(dx)= -|\lambda|.
\end{align*}
Therefore, $\nu_{\al,\de}$ and $\nu_{1,0}$ are the L\'evy measures associated with $S_\al(\de)$ and $S_1(0)$, respectively.  As such, strictly stable distributions are infinitely divisible with zero Gaussian coefficient and zero drift, see \cite[p.37]{sato} for the general form of an infinitely divisible distribution.
Define the operator
\begin{align}\label{op}
\mathcal L^{\al,\de} f(x) = \begin{cases} \int_\RR  (f(x+u) -f(x)) \nu_{\al,\de}(du) & \mbox{ if } 0<\al<1 \\
\int_\RR [f(x+u) - f(x) - uf'(x) 1_{|u|\le 1}] \nu_{1,0}(du) & \mbox{ if } \al=1
\end{cases}
\end{align}
for any sufficiently smooth function $f$. Recall that $\mathcal L^{\al,\de}$ is the generator of a stable L\'evy process $(Z_t)_{t\ge 0}$ with $Z_1\sim S_\al(\de)$, see \cite[p.208]{sato}. When $\de=0$, $\mathcal L^{\al,0}$ is the usual fractional Laplacian.

Our first result is the following, providing an intrinsic upper bound  in the spirit of \eqref{e:Xu} for the $\dwb$-distance between $S_\al(\de)$ and any random variable $F$.

\begin{thm}\label{thm1}
i) Let $Z\sim S_\al(\de)$ with $0<\al< 1$. For $\be<\al$, we have
\begin{align}\label{e:steinstable}
\dwb(F,Z)\le \sup_{f\in \mathcal F_\be}|\E[\mathcal L^{\al,\de}f(F)]- \frac{1}{\al}\E[Ff'(F)]|
\end{align}
where each element of the class $\mathcal F_\be$ satisfies the following.
\begin{itemize}
\item[a.] $\big|f(x)-f(y)\big|\leq c_{\alpha,\beta}|x-y|\wedge|x-y|^{\beta}.$
\item[b.] $\norm{f'}_{\al}\le c_\al$, where $\norm{g}_\al:=\norm{g}_\infty + \sup_{x\neq y}\frac{|g(x)-g(y)|}{|x-y|^\al}$.
\item[c.] $\norm{\mathcal L^{\al,\de}f}_\gamma\le c_{\al,\be,\gamma}$ for any $\gamma\in(0,1)$.
\end{itemize}
ii) Let $Z\sim S_1(0)$ and $\be<1$. The bound \eqref{e:steinstable} holds with $\mathcal L^{\al,\de}$ replaced by $\mathcal L^{1,0}$, and $\mathcal F_\be$ replaced by $\mathcal F_1$ composed of functions satisfying
\begin{itemize}
 \item[a.] $\big|f(x)-f(y)\big|\leq c_{\alpha,\beta}|x-y|\wedge|x-y|^{\beta}.$
 \item[b.] $\norm{f'}_{1,\log}\le c$, where $\norm{g}_{1,\log}= \norm{g}_\infty +\sup_{x\neq y, |x-y|\le 1} \frac{|g(x)-g(y)|}{|x-y|(1-\log|x-y|)}$.
 \item[c.] $\norm{\mathcal L^{1,0} f}_{1,\log}\le c$.
\end{itemize}
\end{thm}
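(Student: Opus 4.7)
The plan is to carry out Stein's program in three classical steps.

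\textbf{Step 1: characterizing operator and Stein equation.} I would take as Stein operator
\begin{equation*}
\mathcal Af(x):=\mathcal L^{\al,\de}f(x)-\tfrac{1}{\al}xf'(x),
\end{equation*}
which is the infinitesimal generator of the Ornstein--Uhlenbeck--type semigroup
\begin{equation*}
P_th(x):=\mathbb E\bigl[h\bigl(e^{-t/\al}x+(1-e^{-t})^{1/\al}Z\bigr)\bigr],\qquad t\ge 0,
\end{equation*}
whose invariance of $S_\al(\de)$ follows from strict $\al$-stability: $(e^{-t/\al})^\al+((1-e^{-t})^{1/\al})^\al=1$, so $P_t$ fixes the law of $Z$, and in particular $\mathbb E[\mathcal Af(Z)]=0$ for smooth enough $f$. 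For $h\in\mathcal H_\be$, I would define the candidate solution by the standard semigroup formula
\begin{equation*}
f_h(x):=-\int_0^\infty\bigl(P_th(x)-\mathbb E[h(Z)]\bigr)\,dt,
\end{equation*}
which formally satisfies $\mathcal Af_h=h-\mathbb E[h(Z)]$. The integrability at infinity relies on the coupling estimate $|P_th(x)-\mathbb E[h(Z)]|\le\mathbb E[d_\be(e^{-t/\al}x+(1-e^{-t})^{1/\al}Z,Z)]$, which decays exponentially thanks to the constraint $\be<\al$ ensuring $\mathbb E[|Z|^\be]<\infty$.

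\textbf{Step 2: regularity of $f_h$.} Once $f_h$ is identified, the heart of the argument is verifying (a)--(c). Property (a) transfers directly from the contraction $|e^{-t/\al}x-e^{-t/\al}y|\le|x-y|$: integrating $d_\be(e^{-t/\al}x,e^{-t/\al}y)$ over $t\in(0,\infty)$ yields the bound $c_{\al,\be}(|x-y|\wedge|x-y|^\be)$. Properties (b) and (c) rely on the smoothing estimates of the stable semigroup. Writing $P_th$ as a convolution with the smooth density $p_{Z_t^*}$ of $Z_t^*:=(1-e^{-t})^{1/\al}Z$ and integrating by parts moves derivatives from $h$ onto the kernel, producing estimates of $\|(P_th)'\|_\infty$ and $\|\mathcal L^{\al,\de}P_th\|_\infty$ that scale like $(1-e^{-t})^{-1/\al}$ near $t=0$ times a modulus of $h$. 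Integrating in $t$ combines the $t\to 0$ singularity $t^{-1/\al}$ with the exponential contraction at infinity, giving the uniform control $\|f_h'\|_\al\le c_\al$ and $\|\mathcal L^{\al,\de}f_h\|_\gamma\le c_{\al,\be,\gamma}$ for every $\gamma\in(0,1)$. The case $\al=1$ is borderline: the exponent becomes one and the naive estimate fails by a logarithmic factor, which is precisely why the $\al$-H\"older norm must be relaxed to $\|\cdot\|_{1,\log}$ in (ii).

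\textbf{Step 3: conclusion and main obstacle.} Granted (a)--(c), for every $h\in\mathcal H_\be$ the function $f_h$ lies in $\mathcal F_\be$ (resp.\ $\mathcal F_1$) and evaluating the Stein equation at $F$ gives
\begin{equation*}
\mathbb E[h(F)]-\mathbb E[h(Z)]=\mathbb E[\mathcal L^{\al,\de}f_h(F)]-\tfrac{1}{\al}\mathbb E[Ff_h'(F)];
\end{equation*}
taking the supremum over $h\in\mathcal H_\be$ produces the supremum over a subset of $\mathcal F_\be$, which proves \eqref{e:steinstable}. The main obstacle is verifying (c), namely $\|\mathcal L^{\al,\de}f_h\|_\gamma\le c_{\al,\be,\gamma}$: the operator $\mathcal L^{\al,\de}$ is non-local of order $\al\le 1$ with a singular L\'evy measure, and $f_h$ inherits only $d_\be$-regularity from $h$ with $\be<\al$. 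This forces a careful splitting of the defining integral into small-- and large--jump regions, delicately balanced against the $(1-e^{-t})^{-1/\al}$ blow-up of spatial derivatives of $P_th$ near $t=0$. This balance, absent in the setting $\al\in(1,2)$ of \cite{lihu,C-N-X} where $\mathbb E|Z|<\infty$ simplifies the analysis, is also the root cause of the logarithmic correction at the critical value $\al=1$.
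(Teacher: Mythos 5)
Your proposal follows the same Barbour-generator/Ornstein--Uhlenbeck route as the paper: characterize $S_\al(\de)$ by $\mathcal A_{\al,\de}f=\mathcal L^{\al,\de}f-\frac1\al xf'$, solve Stein's equation via $f_h=-\int_0^\infty(Q_t h-\E h(Z))\,dt$ with the explicit semigroup $Q_th(x)=\E[h(e^{-t/\al}x+(1-e^{-t})^{1/\al}Z)]$, derive regularity (a)--(c) for $f_h$ by splitting the $t$-integral against the smoothing of the stable density, and take suprema. This is exactly the structure of the paper's Lemma \ref{lm23} and Propositions \ref{prop3}--\ref{P:fc}, and the proposal is correct in outline, including the observation that the $\al=1$ endpoint produces the logarithmic modulus.

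One small imprecision worth noting: in Step 2 you attribute a $(1-e^{-t})^{-1/\al}$ blow-up to both $\|(Q_th)'\|_\infty$ and $\|\mathcal L^{\al,\de}Q_th\|_\infty$. In fact $\|(Q_th)'\|_\infty\le e^{-t/\al}$ directly (differentiate inside, keep the derivative on $h$; do \emph{not} integrate by parts onto the kernel), which is how the paper obtains $\|f'\|_\infty\le\al$ cleanly. Likewise, in the representation $\mathcal L^{\al,\de}f(x)=-\int_0^\infty s(t)^{-1}e^{-t}\int(\mathcal L^{\al,\de}p)(z)\widetilde h(s(t)^{1/\al}z+e^{-t/\al}x)\,dz\,dt$ the prefactor $s(t)^{-1}e^{-t}$ is exactly cancelled by the $s(t)$ coming from the scaling estimate $\big|\int(\mathcal L^{\al,\de}p)(z)h(az)\,dz\big|\lesssim a^\al$ with $a=s(t)^{1/\al}$, so the integrand is bounded -- there is no genuine $t^{-1/\al}$ singularity at $t=0$ in the boundedness estimate; the $t^{-1/\al}$ behaviour only enters the \emph{H\"older} estimate for $f'$ on the region $t>B=|x-z|^\al$, where the extra factor $|x-z|$ keeps the integral finite for $\al<1$ and produces the $\log$ at $\al=1$. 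The balance you describe is therefore real, but it happens in the splitting $[0,B]\cup[B,\infty)$ rather than as a $t\to0$ singularity compensated by the modulus of $h$.
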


\begin{rem} The theorem is proved by solving Stein's equation and studying the regularity of the solutions. This is achieved  by Barbour's generator approach \cite{B} where one is concerned with a Markov process with stationary distribution being the stable law. We give details in Section \ref{s:OU}. Recalling the class $\mathcal F$ in \eqref{e:Xu}, the properties satisfied by elements in $\mathcal F_\be$ involve less differentiability, but more on the H\"older continuity. As first sight, the obtained upper bound may be infinite for non-integrable $F$. It turns out that such bound gives nearly optimal rates in many examples by carefully making use the regularity estimates.
\end{rem}

To illustrate an explicit use of our abstract Theorem \ref{thm1}, we compute the rate of convergence in the generalized central limit theorem for the partial sum of a sequence of independent and identically distributed random variables in the domain of normal attraction of an $\alpha$-stable law  defined as follows.

\begin{defn}\label{px}
A real-valued random variable $X$ is said to be in the domain of normal attraction $\mathcal{D}_\alpha$  of an $\al$-stable law if its cumulative distribution function $F_X$ has the form
\begin{equation}\label{dis}
1-F_X(x) = \frac{A+\epsilon(x)}{|x|^{\alpha}}(1+\delta) \quad \mbox{ and } \quad  F_X(-x)=\frac{A+\epsilon(-x)}{|x|^{\alpha}}(1-\delta)
\end{equation}
as $x>1$, where $\alpha\in(0,1]$, $\delta\in[-1,1]$, $A>0$,  and $\epsilon: \R\to\R$ is a bounded function vanishing at $\pm\infty$. We write for simplicity $X\in \mathcal D_\al$. Since $\epsilon$ is a bounded function, we denote
\begin{align*}
K:=\sup_{x\in\mathbb{R}}|\epsilon(x)|<\infty.
\end{align*}
\end{defn}


\begin{theorem}\label{thm2}
Let $X_1,X_2,\ldots$ be independent and identically distributed random variables defined on a common probability space. Suppose that $X_1$ has
a distribution of the form (\ref{dis}) and there exists a positive constant $L,$ such that for any $|x|>L,$ the $\epsilon(x)$ in (\ref{dis}) is $C^2$, which satisfies $x\epsilon'(x)\rightarrow0$ as $|x|\rightarrow \infty$.
Set
$\sigma=\left(2A\alpha/d_\alpha \right)^\frac1\alpha$ and
\begin{align}\label{sum}
S_{n}=\frac{1}{\sigma}n^{-\frac{1}{\alpha}}
\begin{cases}
X_{1}+\cdots+X_{n}, &\alpha\in(0,1),\\
X_{1}+\cdots+X_{n}-n\mathbb{E}\big[X_{1}{\bf 1}_{(0,\sigma n)}(|X_{1}|)\big], &\alpha=1.
\end{cases}
\end{align}
\noindent
i) When $\alpha\in(0,1),$ we have
\begin{align*}
&\dwb\big(S_n,Z\big)\\
\leq&
c_{\alpha,\beta,A,K}
\Big[n^{-1}\!+\!n^{-\frac{1}{\alpha}}\!\int_{-\sigma n^{\frac{1}{\alpha}}}^{\sigma n^{\frac{1}{\alpha}}}\!|x|^{1+\alpha}\big|d\frac{\epsilon(x)}{|x|^{\alpha}}\big|\!+\!
n^{\frac{\alpha-\beta}{\alpha}}\!\int_{|x|\geq \sigma n^{\frac{1}{\alpha}}}|x|^{\beta}\big(\big|d\frac{x\epsilon'(x)}{|x|^{\alpha}}\big|\!+\!\big|d\frac{\epsilon(x)}{|x|^{\alpha}}\big|\big)\!+ \!\mathcal{R}_{1,n}\Big] ,
\end{align*}
where $c_{\alpha,\delta,\beta,A,K}$ is a constant which depends on $\alpha,\delta,\beta,A,K$ that can be made explicit and
\begin{align*}
\mathcal{R}_{1,n}=&\sup_{|x|\geq\sigma n^{\frac{1}{\alpha}}}|\epsilon(x)|+n^{\frac{\alpha-1}{\alpha}}\Big|(1+\delta)\int_{0}^{\sigma n^{\frac{1}{\alpha}}}\frac{\epsilon(x)}{x^{\alpha}}\dif x-(1-\delta)\int_{0}^{\sigma n^{\frac{1}{\alpha}}}\frac{\epsilon(-x)}{x^{\alpha}}\dif x\Big|.
\end{align*}

\noindent
ii) When $\alpha=1$ and $\delta=0,$ we have
\begin{align*}
&\dwb(S_n,Z)\\
\leq&c_{\beta,A,K}\Big[n^{-1}(\log n)^2
+n^{-1}\int_{-\sigma n}^{\sigma n}|x|^{2}\big(2-\log|\frac{n^{-1}}{\sigma}x|\big)\big|d\frac{\epsilon(x)}{|x|}\big|\\
&\qquad\qquad\qquad\qquad+n^{1-\beta}\int_{|x|\geq \sigma
n}|x|^{\beta}\big(\big|d\frac{x\epsilon'(x)}{|x|}\big|+\big|d\frac{\epsilon(x)}{|x|}\big|\big)+\mathcal{R}_{2,n}\Big],
\end{align*}
where
\begin{align*}
\mathcal{R}_{2,n}=n^{-1}(\log n)^{2}\Big|\int_{0}^{\sigma n}\frac{\epsilon(x)-\epsilon(-x)}{x}\dif x\Big|.
\end{align*}
\end{theorem}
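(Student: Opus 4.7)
The plan is to apply Theorem \ref{thm1} with $F=S_n$ and estimate the right-hand side of \eqref{e:steinstable} uniformly over $f\in\mathcal F_\be$ (or $\mathcal F_1$ when $\al=1$). Set $\tl X_i=\sigma^{-1}n^{-1/\al}X_i$ so that $S_n=\sum_i\tl X_i$ up to the explicit centering when $\al=1$, and let $S_n^{(1)}$ denote the leave-one-out partial sum, which by construction is independent of $\tl X_1$. Exchangeability yields
\[
\tfrac{1}{\al}\E[S_n f'(S_n)]=\tfrac{n}{\al}\E\bigl[\tl X_1\,f'(S_n^{(1)}+\tl X_1-c_{n,\al})\bigr]-\mathbf 1_{\{\al=1\}}\tfrac{n}{\al}\E[\tl X_1^L]\,\E[f'(S_n)],
\]
where $\tl X_1^L:=\tl X_1\mathbf 1_{\{|\tl X_1|\le 1\}}$ and $c_{n,\al}=\mathbf 1_{\{\al=1\}}\E[\tl X_1^L]$. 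This is the starting point of the leave-one-out analysis.

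The core idea is then to split the $\tl X_1$-integral at $|\tl X_1|=1$ and treat the two pieces with different tools. On the small-jump piece, I would use the elementary identity
\[
\tl x\,f'(y+\tl x)=[f(y+\tl x)-f(y)]+\int_0^{\tl x}[f'(y+\tl x)-f'(y+r)]\,dr,
\]
so that integration against $\mathbf 1_{\{|\tl x|\le 1\}}\PP_{\tl X_1}$ produces a principal term comparable to $\int_{|u|\le 1}[f(S_n+u)-f(S_n)]\,\nu_{\al,\de}(du)$ (a piece of $\E[\mathcal L^{\al,\de}f(S_n)]$) plus a remainder controlled by $\|f'\|_\al$ (resp.\ $\|f'\|_{1,\log}$ when $\al=1$). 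The key reason the match works is that the normalization $\sigma^\al=2A\al/d_\al$ makes the leading $A$-part of the tail in \eqref{dis} ensure $n\PP_{\tl X_1}$ and $\nu_{\al,\de}$ agree exactly on their leading order on $\{|u|\le 1\}$; the $\epsilon$-correction is what remains. After the change of variables $x=u\sigma n^{1/\al}$ and an integration by parts transferring a derivative onto the test function (whose modulus is controlled by property~(b) of $\mathcal F_\be$), this correction produces the announced interior integral $n^{-1/\al}\int_{|x|\le\sigma n^{1/\al}}|x|^{1+\al}\,|d(\epsilon(x)/|x|^\al)|$, with boundary terms at $|x|=\sigma n^{1/\al}$ feeding into $\mathcal R_{j,n}$.

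On the large-jump piece, the absence of a first moment rules out any Taylor-type expansion. I would instead rely on the endpoint regularity from property~(a), namely $|f(u)-f(0)|\le c(|u|\wedge|u|^\be)$, together with $\|f'\|_\infty\le c$. The bound $|\tl x\,f'(y+\tl x)|\le \|f'\|_\infty|\tl x|$ remains too crude by itself (the integral diverges), but by regrouping the expression into $f$-increments via the same identity above and using the endpoint bound $|f(y+\tl x)-f(y)|\le c\,|\tl x|^\be$ on $\{|\tl x|>1\}$, integration against $\PP_{\tl X_1}$ coupled with the tail representation \eqref{dis} yields the tail integral $n^{(\al-\be)/\al}\int_{|x|\ge\sigma n^{1/\al}}|x|^\be\bigl(|d(x\epsilon'(x)/|x|^\al)|+|d(\epsilon(x)/|x|^\al)|\bigr)$. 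The analogous large-jump split on the generator side, together with the replacement $S_n\mapsto S_n^{(1)}$ inside $\mathcal L^{\al,\de}f$ (whose cost is $O(\E[|\tl X_1|^\gamma\wedge 1])=O(n^{-1})$ for any $\gamma\in(\al,1)$ by property~(c)), absorbs the remaining main cancellation and produces $\mathcal R_{1,n}$ from the asymmetric drift mismatch.

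The main obstacle will be the case $\al=1$. First, the compensator $\mathbf 1_{\{|u|\le 1\}}$ in $\mathcal L^{1,0}$ and the explicit centering $n\E[X_1\mathbf 1_{\{|X_1|<\sigma n\}}]$ in the definition of $S_n$ must be tracked simultaneously; their mismatch is precisely what produces the asymmetric symmetry-breaking term $\mathcal R_{2,n}$. Second, since $\mathcal F_1$ only affords the log-Lipschitz modulus $|g(x)-g(y)|\lesssim|x-y|(1-\log|x-y|)$ for $f'$ rather than genuine H\"older regularity, each of the two core estimates (the Taylor-type small-jump remainder and the replacement $S_n\mapsto S_n^{(1)}$ on the generator side) loses a factor of $\log n$, thereby accounting for the announced $(\log n)^2$ prefactor in part~(ii). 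Collecting all the error terms---the small-jump Taylor remainder, the generator replacement error, the large-jump control via property~(a), the $\epsilon$-corrections and their boundary contributions at $|x|=\sigma n^{1/\al}$, and the $\al=1$ centering adjustment---and matching them against the announced forms concludes the argument.
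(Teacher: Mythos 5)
The high-level plan — leave-one-out, split at the scale $|\tilde X_1|=1$, use of the regularity properties (a), (b), (c) from Theorem~\ref{thm1}, the $O(n^{-1})$ generator-replacement cost, and the drift mismatch producing $\mathcal{R}_{j,n}$ — does coincide with the structure of the paper's proof. But there is a genuine gap in the large-jump piece that the sketch does not address.

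The elementary identity $\tilde x f'(y+\tilde x)=[f(y+\tilde x)-f(y)]+\int_0^{\tilde x}[f'(y+\tilde x)-f'(y+r)]\,dr$ produces, on $\{|\tilde x|>1\}$, a remainder whose absolute value is of order $\|f'\|_\infty\,|\tilde x|$ (even the $\alpha$-H\"older bound only yields $|\tilde x|^{1+\alpha}$), and after pairing with $d(F_X-F_{\tilde X})\sim d(\epsilon(x)/|x|^\alpha)$ on $|x|>a^{-1}$ the resulting integral $\int_{|x|>a^{-1}}|x|\,|d(\epsilon(x)/|x|^\alpha)|$ need not converge — indeed it already diverges for $\alpha<1$ when $\epsilon$ is merely bounded and vanishing. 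Nothing in your sketch explains where the $|d(x\epsilon'(x)/|x|^\alpha)|$ term in the announced tail integral comes from, and the $\|f'\|_\infty$-bound alone cannot produce it. The paper resolves exactly this in the $\mathcal{II}$-estimate of Lemma~\ref{ml2}: it does \emph{not} pass to $f$-increments first, but keeps the expression at the $f'$-level via the alternate representation $(\mathcal{L}^{\alpha,\delta}f)(x)=\frac{a^{1-\alpha}}{\alpha}\int u f'(x+au)\,\nu_{\alpha,\delta}(du)$ (Proposition~\ref{properties}), expands $d(\epsilon(x)/|x|^\alpha)$, writes $x\epsilon'(x)/x^\alpha=-\int_x^\infty d(t\epsilon'(t)/t^\alpha)$ (valid by the hypothesis $x\epsilon'(x)\to0$), and \emph{then} swaps the order of integration. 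Only after this Fubini rearrangement does the inner $x$-integral of $f'$ collapse to $a^{-1}(f(Y+at)-f(Y+1))$, which can be bounded by $C(at)^\beta$ using property~(a); this is where the finite tail integral $a^{\beta-1}\int t^\beta\bigl(|d(t\epsilon'(t)/t^\alpha)|+|d(\epsilon(t)/t^\alpha)|\bigr)$ comes from. Without this step the argument stalls.

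Two smaller points. First, your starting identity should carry the centering constant $c_{n,\alpha}=n\,\E[\tilde X_1^L]$ rather than $\E[\tilde X_1^L]$, since $S_n=\sum_i\tilde X_i-n\,\E[\tilde X_1\mathbf 1(|\tilde X_1|<1)]$ when $\alpha=1$; this is cosmetic. Second, for the small-jump piece the paper does not need any integration by parts: the interior integral $n^{-1/\alpha}\int_{|x|\le\sigma n^{1/\alpha}}|x|^{1+\alpha}\,|d(\epsilon(x)/|x|^\alpha)|$ comes directly from the $\alpha$-H\"older modulus of $f'$ (property~(b)) applied to $|x|\,|f'(Y+ax)-f'(Y)|\le C_\alpha a^\alpha|x|^{1+\alpha}$, integrated against $d(F_X-F_{\tilde X})$. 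Your ``boundary terms feeding into $\mathcal{R}_{j,n}$'' narrative is also off: in the paper $\mathcal{R}_{1,n}$ arises from the separate drift-mismatch term $\mathrm{III}$ (via the explicit evaluation of $\E[X_1\mathbf 1(|X_1|<\sigma n^{1/\alpha})]$ using Lemma~\ref{expectation}), not from IBP boundary contributions.
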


\begin{rem}
Let us explain heuristically why these bounds can be naturally derived from \eqref{e:steinstable}.  Recall the tail probability of $Z\sim S_\al(\de)$ (see \cite[p.16]{ST})
\begin{align*}
&\PP(Z> x) \sim  \frac{d_\al}{\al} \frac{1+\de}{2} x^{-\al} = \nu_{\al,\de}([x,\infty))\\
&\PP(Z< -x) \sim \frac{d_\al}{\al} \frac{1-\de}{2}  x^{-\al}= \nu_{\al,\de}((-\infty,-x])
\end{align*}
as $x\to\infty$. We formally regard $\mathcal L^{\al,\de}f$ in \eqref{e:steinstable} as a weighted increment of $f$ (the density of $Z$ being the weight function) which, together with regularity properties of $f$ and Taylor-like expansion, is comparable with $S_nf'(S_n)$. The fact that the common distribution of the sequence does not behave exactly as $Z$ in their tails makes the role of $\epsilon$ in these bounds clear. That said, more precise expansion for the tail behavior of $Z$ may be used to improve the bounds.  It is plain that the decay at infinity of $\epsilon$ affects the decay of the above bounds.
\end{rem}

Moreover, by the same argument as the proof of \cite[Corollary I.1]{C-D}, we get the following convergence upper bound on $\dko(S_n,Z),$ which will be proved in the appendix.
\begin{corollary}  \label{c:SCLT}
Keep the same notation and assumptions as in Theorem \ref{thm2}. Then
\begin{align}\label{kol}
\dko(S_n,Z)\leq C_{\alpha}\Big[\dwb\big(S_n,Z\big)\Big]^{\frac{1}{2}}.
\end{align}
\end{corollary}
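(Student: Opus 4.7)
The plan is to adapt the classical smoothing argument that relates a Wasserstein-type metric to the Kolmogorov metric, taking care that the class $\mathcal H_\be$ is defined via the truncated metric $d_\be(x,y)=|x-y|\wedge|x-y|^\be$ rather than $|x-y|$. The key preliminary observation is that any bounded Lipschitz function lies, up to a constant factor, in $\mathcal H_\be$: if $|h|\le M$ and $|h(x)-h(y)|\le L|x-y|$ with $L\ge 1$, then for $|x-y|\le 1$ we have $|h(x)-h(y)|\le L|x-y|$, while for $|x-y|\ge 1$ we have $|h(x)-h(y)|\le 2M\le 2M|x-y|^\be$, so that $h/\max(L,2M)\in\mathcal H_\be$.

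First, I would fix $x\in\R$ and $\e\in(0,1)$, and introduce the standard piecewise-linear smoothing $h_\e$ of $\mathbf 1_{(-\infty,x]}$, namely $h_\e\equiv 1$ on $(-\infty,x]$, $h_\e\equiv 0$ on $[x+\e,\infty)$, and linear in between. Then $h_\e$ is bounded by $1$ and Lipschitz with constant $1/\e$, so by the observation above $\e\, h_\e\in\mathcal H_\be$ (since $1/\e\ge 1$). Writing
\Bes
\PP(S_n\le x)-\PP(Z\le x)\le \E[h_\e(S_n)]-\E[h_\e(Z)]+\PP(x<Z\le x+\e),
\Ees
the first difference is bounded by $\tfrac{1}{\e}\dwb(S_n,Z)$ via the class membership just established.

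Next, I would invoke the well-known fact that $Z\sim S_\al(\de)$ admits a bounded density $p_Z$; in particular $\sup_z p_Z(z)\le C_\al$ (see e.g. \cite{sato}), so that $\PP(x<Z\le x+\e)\le C_\al\e$ uniformly in $x$. Combining the two estimates and performing the symmetric argument with an approximation from below of $\mathbf 1_{(-\infty,x-\e]}$ yields
\Bes
|\PP(S_n\le x)-\PP(Z\le x)|\le \frac{1}{\e}\dwb(S_n,Z)+C_\al\,\e.
\Ees
Optimising in $\e$ by choosing $\e=\sqrt{\dwb(S_n,Z)}$ (which is admissible provided the right-hand side is at most $1$; otherwise \eqref{kol} is trivial since $\dko\le 1$) delivers the announced bound \eqref{kol}.

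The proof involves no real obstacle beyond verifying that bounded Lipschitz functions of the smoothing type belong to $\mathcal H_\be$ up to a harmless constant, which I would address carefully upfront as it is the only place where the non-standard metric $d_\be$ enters nontrivially.
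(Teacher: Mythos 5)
Your argument is correct and is essentially the same smoothing argument the paper uses (the paper scales a generic $h_0\in\mathcal H_\beta$ with $\psi=1/\e$ and likewise invokes the boundedness of the stable density). One small slip: for $\e\,h_\e\in\mathcal H_\beta$ you need $1/\e\ge \max(L,2M)=\max(1/\e,2)$, i.e.\ $\e\le 1/2$, not merely $1/\e\ge 1$; this is harmless since the statement allows a constant $C_\alpha$ (and the range $\e>1/2$ is covered by $\dko\le 1$), but the parenthetical justification as written is not quite right.
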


In Theorem \ref{thm2}, we observe that the function $\epsilon$ is required to satisfy $\epsilon(x)\rightarrow0$ as $x\rightarrow\pm\infty.$ But that $\epsilon$ vanishes is not a necessary condition for the generalized CLT to hold. Actually, by slightly modifying the approach leading to Theorem \ref{thm2}, we can also consider examples where $\epsilon$ is a slowly varying function diverging at infinity. Because it would be too technical to state such result at a great level of generality, we prefer to illustrate an explicit situation for which our methodology still allows to conclude in appendix, and we will give a proof that rather relies on the density function.

\vskip 3mm

\subsection{Examples and application}

We present several consequences of our main results and compare them with those previously obtained for stable approximation.

In our first example, we consider an  independent sequence with common Pareto distributions, namely,
\begin{align*}
\mathbb{P}(X_{1}> x)=\frac{1+\delta}{2|x|^{\alpha}},\quad x\geq1,\qquad \mathbb{P}(X_{1}\leq x)=\frac{1-\delta}{2|x|^{\alpha}},\quad x\leq-1,
\end{align*}
for $\alpha\in(0,1]$ and $\delta\in[-1,1],$ whose sum scaled by $n^{-\frac{1}{\alpha}}$ weakly converges to a stable distribution. In the case $\delta=0,$ that is, $\alpha$-stable distribution is symmetric, the authors of \cite{DaNa02} proved a rate $n^{-\frac{\alpha}{1+\alpha}}$  in total variation distance and conjectured that a better rate  $n^{-1}$ in
total variation distance should be valid. Our result gives a partial answer to their conjecture, that the rate $n^{-1}$ is valid for the $\dwb$-distance. In the case $\alpha=1,$ our result is within a $\log n$ factor of optimal.

\smallskip
The second example concerns a sequence of independent random variables with common distribution function
\begin{align*}
&\mathbb{P}(X_{1}> x)=(A|x|^{-\alpha}+\widetilde{A}|x|^{-\widetilde{\alpha}})(1+\delta),\quad x\geq1,\\
&\mathbb{P}(X_{1}\leq x)=(A|x|^{-\alpha}+\widetilde{A}|x|^{-\widetilde{\alpha}})(1-\delta),\quad x\leq-1,
\end{align*}
for some $A>0,\tilde{A}>0$ and $\tilde{\alpha}>\alpha.$
We obtain the convergence rate $n^{-1}+n^{\frac{\al-\tilde{\al}}{\al}}$ for $\al\in(0,1)$ which is the same convergence rate as \cite{KuKe00} in Kolmogorov distance for the case $\delta=0$.

The third example is suggested by Persi Diaconis, who proposed it as an open problem in AMS workshop 'Stein's method and its application in high dimensional statistics' in August 2018 \cite{Dia}. This problem originates from the fact $Y=^{d}\frac{1}{Y}$ if $Y$ follows the symmetric Cauchy distribution $S_1(0)$.
So we consider the reciprocal of sum of random variables and obtain the convergence rate in Kolmogorov distance.

In Appendix \ref{s:rv}, we also consider an example
where the common distribution has the regularly varying density $p(x)=\frac{\alpha^2e^\alpha}{2(1+\alpha)}\,\frac{\log |x|}{|x|^{\alpha+1}}{\bf 1}_{[e,\infty)}(|x|)$, which is not in the domain of normal attraction of a stable law. We obtain the same convergence rate $(\log n)^{-1}$ as  \cite{JuPa98} for the case
$p(x)=\frac{\alpha^2e^\alpha}{(1+\alpha)}\,\frac{\log |x|}{|x|^{\alpha+1}}{\bf 1}_{[e,\infty)}(x)$.

\subsection{Plan of the paper}

The rest of the paper is organized as follows.  Section \ref{s:prelim} is devoted to some preliminary facts and Barbour's generator approach for solving Stein's equation.  In Section \ref{s:reg_Stein}, we study the solution to Stein's equation in detail, and obtain the intrinsic bound Theorem \ref{thm1} as a byproduct. Making use of the regularity results obtained in Section \ref{s:reg_Stein}, we prove Theorem \ref{thm2} in Section \ref{s:proofs}. Examples are worked out in Section  \ref{s:examples} and auxiliary results are given in Appendix.

{\bf Acknowledgements:} We would like to gratefully thank Jay Bartroff, Larry Goldstein, Stanislav
Minsker, and Gesine Reinert for organizing a stimulating workshop \cite{Dia}, and the participants for several discussions. We would also like to gratefully thank Persi Diaconis for very helpful discussions and encouraging us to add his example in the paper.

\section{Preliminaries}\label{s:prelim}

\subsection{Properties of stable densities}

We recall some basic facts on the densities of a strictly $\al$-stable L\'evy process $(Z_t)_{t\ge 0}$ with $Z_1\sim S_\al(\de)$. Denote by $p_t(x)$ the density of $Z_t$. It is known \cite{C-Z} that $p(t,x,y):=p_t(y-x)$ is the fundamental solution of the operator $\mathcal L^{\al,\de}$ in the sense that
\begin{align}\label{e:p_PDE}
\partial_t p(t,x,y) + \mathcal L^{\al,\de} p(t,\cdot,y)(x)=0.
\end{align}
The self-similarity of the process $(Z_t)_{t\ge 0}$ translates to the following scaling relation
\begin{align}\label{e:p_scaling}
p_t(x) = c^{\frac{1}{\al}} p_{ct}(c^{\frac 1 \al}x),  \quad \forall c>0, t>0, x\in\RR.
\end{align}
Write for simplicity $p_1(x)=p(x)$. In the following lemma, we list a few estimates of the stable densities that will be useful for our purposes.

\begin{lemma}\label{p-prop} Suppose that $\de\not\in\{-1,1\}$. The following statements hold for all $t>0$, $x\in\RR$.
\begin{itemize}
  \item [(1)]$$p_t(x)\le \frac{C_{\alpha}t}{(t^{1/\alpha}+|x|)^{\alpha+1}}.$$
  \item [(2)]$$|\mathcal{L}^{\alpha,\delta}p_t(x)|\le\frac{C_{\alpha}}{(t^{1/\alpha}+|x|)^{\alpha+1}}.$$
  \item[(3)] $$|p_t(x)-p_t(y)|\le C_{\alpha}\left(\frac{|x-y|}{t^{1/\al}}\wedge 1\right)(p_t(x)+p_t(y)).$$
  \item [(4)] In the case $\de=0$, we have for $k\in\N$, $$|\partial^{k}_x p(t,x)|\le\frac{C_{\alpha}t}{(t^{1/\alpha}+|x|)^{\alpha+1+k}}.$$
\end{itemize}
\end{lemma}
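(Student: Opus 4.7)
The plan is to reduce all four bounds to the case $t = 1$ via the self-similarity relation $p_t(x) = t^{-1/\alpha}p(t^{-1/\alpha}x)$, obtained by choosing $c = 1/t$ in \eqref{e:p_scaling}. After the change of variable $y = t^{-1/\alpha}x$, every factor of the form $(t^{1/\alpha}+|x|)^{-\gamma}$ becomes $t^{-\gamma/\alpha}(1+|y|)^{-\gamma}$, so it suffices to prove, for $p = p_1$: (i') $p(y) \le C_\alpha(1+|y|)^{-\alpha-1}$; (ii') $|\mathcal{L}^{\alpha,\delta}p(y)| \le C_\alpha(1+|y|)^{-\alpha-1}$; (iii') $|p(y)-p(z)| \le C_\alpha(|y-z|\wedge 1)(p(y)+p(z))$; and (iv') $|p^{(k)}(y)| \le C_\alpha(1+|y|)^{-\alpha-1-k}$ when $\delta = 0$.

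For (i') and (iv') I would work on the Fourier side. Since the characteristic function is $\widehat{p}(\lambda) = e^{-\psi(\lambda)}$ with $\mathrm{Re}\,\psi(\lambda) = |\lambda|^\alpha$ whenever $\delta \in (-1,1)$, both $\widehat{p}$ and $\lambda \mapsto \lambda^k\widehat{p}(\lambda)$ belong to $L^1(\mathbb{R})$, so $p$ is smooth with bounded derivatives of all orders. The polynomial tails at infinity arise from the non-smoothness of $\widehat{p}$ at the origin and can be extracted either by the classical Zolotarev asymptotic series for strictly stable densities (see, e.g., \cite[Ch.\ 14]{sato}) or by contour deformation in the Fourier inversion integral, with the corresponding expansion obtained term-by-term for derivatives. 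Combining boundedness near the origin with polynomial decay at infinity gives (i') and (iv').

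For (ii') I would exploit the scaling relation together with the forward Kolmogorov equation $\partial_t p_t = \mathcal{L}^{\alpha,\delta}p_t$, derivable from the Fourier-symbol identity already recorded in the paper. Differentiating $p_t(x) = t^{-1/\alpha}p(t^{-1/\alpha}x)$ in $t$ at $t = 1$ yields the closed-form identity $\mathcal{L}^{\alpha,\delta}p(y) = -\alpha^{-1}(p(y) + yp'(y))$ (up to a sign depending on the convention in \eqref{e:p_PDE}, irrelevant for absolute-value bounds); then (ii') reduces to combining (i') with $|yp'(y)| \le C_\alpha(1+|y|)^{-\alpha-1}$, i.e.\ the $k=1$ case of (iv') extended to general $\delta \in (-1,1)$ by the same Fourier argument, with $\delta$-dependent constants. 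For (iii'), a case split handles $|y-z| \ge 1$ trivially via the triangle inequality; for $|y-z| < 1$, the mean value theorem combined with the $k=1$ derivative bound, the elementary comparison $1+|\xi| \asymp 1+|y| \asymp 1+|z|$ for $\xi$ between $y$ and $z$, and the matching lower envelope $p(y) \ge c_\alpha(1+|y|)^{-\alpha-1}$ valid for $\delta \in (-1,1)$ (from two-sided Pareto tails of $\nu_{\alpha,\delta}$ plus continuity and positivity of $p$ near $0$) together yield the required estimate.

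The main obstacle is the sharp derivative estimate (iv') in the asymmetric case $\delta \ne 0$, which is needed as the $k = 1$ input for both (ii') and (iii'). Unlike the symmetric case, where $\widehat{p}$ is real and even and one can argue via direct integration by parts on the half-line, the asymmetric symbol $\psi(\lambda) = |\lambda|^\alpha(1 - i\delta\,\mathrm{sgn}(\lambda)\tan(\pi\alpha/2))$ is complex and requires either complex contour deformation or Zolotarev's Mellin--Barnes representation, with constants blowing up as $\delta \to \pm 1$; this is precisely the degeneration that motivates excluding one-sided stable laws from the hypothesis.
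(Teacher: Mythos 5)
Your reduction to $t=1$ via self-similarity matches the paper, but from there the routes diverge substantially. The paper proves almost nothing from scratch: for $\alpha=1$ (hence $\delta=0$) it invokes \cite[Th.~1.1, Lem.~2.2]{ChZh16} directly, and for $\alpha\in(0,1)$ it cites \cite[(2.7), (3.1)]{C-Z} for (1) and (3); for (2) it then gives a completely derivative-free argument,
\begin{equation*}
|\mathcal{L}^{\alpha,\delta}p(x)| \ \le\ d_\alpha \int_{\mathbb{R}} \frac{|p(x+y)-p(x)|}{|y|^{1+\alpha}}\,dy \ \le\ \frac{C_\alpha}{(1+|x|)^{\alpha+1}},
\end{equation*}
where the final bound follows from (1) and (3) alone by splitting the integral at $|y|\lesssim 1+|x|$: on the inner region (3) gives $|p(x+y)-p(x)|\lesssim(|y|\wedge 1)(p(x+y)+p(x))\lesssim(|y|\wedge 1)\,p(x)$ and the kernel integrates because $\alpha<1$, while on the outer region the triangle inequality together with $\int p=1$ and $\int_{|y|>1+|x|}|y|^{-1-\alpha}\,dy\lesssim(1+|x|)^{-\alpha}$ suffices. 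Your proposal instead rebuilds the heat-kernel estimates from the Fourier side (Zolotarev asymptotics, contour deformation) and derives (2) through the scaling identity $\mathcal{L}^{\alpha,\delta}p(y)=-\alpha^{-1}\bigl(p(y)+y\,p'(y)\bigr)$ obtained by differentiating $p_t(x)=t^{-1/\alpha}p(t^{-1/\alpha}x)$ in $t$ at $t=1$; that identity is correct and rather elegant.

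The cost of your route is exactly what you flag at the end: your argument for (2) needs the $k=1$ case of (4) for \emph{asymmetric} $\delta$, and your argument for (3) additionally needs a matching two-sided lower envelope $p(y)\gtrsim(1+|y|)^{-\alpha-1}$. Both are true for $\delta\in(-1,1)$, but neither is stated in the lemma (which only asserts (4) for $\delta=0$, precisely because the paper never needs more), and neither is trivial to extract from the Fourier representation in the asymmetric case. So your strategy would succeed in principle but at the price of substantially heavier inputs than the paper's citation-plus-elementary-integral argument; in particular, the detour through $p'$ for (2) is avoidable, since the split described above closes the estimate using only (1) and (3).
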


\begin{proof}
By the scaling property, we only need to consider $t=1$.
When $\delta=0,$ the $\alpha-$stable process is symmetric, so by \cite[Theorem 1.1 and Lemma 2.2]{ChZh16}, we immediately obtain the results for $\alpha=1.$ Now we consider $\alpha\in(0,1)$.
(1) and (3) are from
\cite[(2.7) and (3.1)]{C-Z},
respectively.
By the same arguments as the proof of \cite[(2.28)]{ChZh16}
\begin{align*}
\mathcal{L}^{\alpha,\delta}p(t,x)&=d_{\alpha}\int_{-\infty}^{\infty}\frac{p(t,x+y)-p(t,x)}{2|y|^{1+\alpha}}\big((1+\delta){\bf 1}_{(0,\infty)}(y)+(1-\delta){\bf 1}_{(-\infty,0)}(y)\big)\dif y\\
&\leq d_{\alpha}\int_{-\infty}^{\infty}\frac{|p(t,x+y)-p(t,x)|}{|y|^{1+\alpha}}\dif y\leq\frac{C_{\alpha}}{(t^{1/\alpha}+|x|)^{\alpha+1}},
\end{align*}
which implies (2).
\end{proof}

\begin{remark} \label{r:subordinator}
The condition on $\de$ is necessary when $\al<1$, as one can see from the specific case $S_{1/2}(1)$, where $p_t(x)=\frac{t}{x^{3/2}}e^{-\frac{t^2}{x}}1_{x>0}$ is the density of a $\frac 1 2$-stable subordinator.
\end{remark}

\subsection{Distance $\dwb$}

By Kantorovich duality \cite[p.19]{V-C}, the distance $\dwb$ used in the paper is the Wasserstein distance corresponding to the cost function $d_\be(x,y)=|x-y|\wedge|x-y|^\be$ with $\be<\al$ such that the distance is meaningful for $\al$-stable approximation.
Though not explicitly said, it is easy to see that functions in the space $\mathcal H_\be$ are \emph{bounded} due to the following lemma.
\begin{lem}\label{l:2.1}
Let $h\in\mathcal H_\be$ with $\be<\al$. Then $|h(y)-\E[h(Z)]|\le C_{\alpha}(1+|y|^{\beta})$.
\end{lem}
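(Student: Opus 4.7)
The plan is to write $h(y)-\E[h(Z)] = \E[h(y)-h(Z)]$, pull the absolute value inside the expectation via Jensen, and then use the Lipschitz-type hypothesis together with the finiteness of the fractional moment $\E[|Z|^\beta]$ (recalled just before the lemma from \cite[p.18]{ST}, valid whenever $\beta<\alpha$).

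First I would observe that for $\beta<\alpha\le 1$ the cost $d_\beta(x,y)=|x-y|\wedge|x-y|^\beta$ admits the simple global bound
\begin{equation*}
d_\beta(x,y)\le 1+|x-y|^\beta,
\end{equation*}
obtained by splitting according to whether $|x-y|\le 1$ (in which case $d_\beta=|x-y|\le 1$) or $|x-y|>1$ (in which case $d_\beta=|x-y|^\beta$). Using $h\in\mathcal H_\beta$, this gives
\begin{equation*}
|h(y)-\E[h(Z)]|\le \E\bigl[\,|h(y)-h(Z)|\,\bigr]\le \E\bigl[d_\beta(y,Z)\bigr]\le 1+\E\bigl[|y-Z|^\beta\bigr].
\end{equation*}

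Next, since $\beta\in(0,1)$, the function $t\mapsto t^\beta$ is subadditive on $[0,\infty)$, so
\begin{equation*}
|y-Z|^\beta\le |y|^\beta+|Z|^\beta.
\end{equation*}
Taking expectation and using $\E[|Z|^\beta]<\infty$ yields
\begin{equation*}
|h(y)-\E[h(Z)]|\le 1+|y|^\beta+\E[|Z|^\beta]\le C_\alpha(1+|y|^\beta),
\end{equation*}
which is the announced bound.

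There is no real obstacle here: the only subtlety is that for $\alpha\le 1$ one cannot afford the cruder estimate $\E[|y-Z|]\le |y|+\E[|Z|]$ since $\E[|Z|]=\infty$, and this is exactly what forces the truncated cost $d_\beta$ (and the use of subadditivity of $t\mapsto t^\beta$ rather than the triangle inequality) in the argument above.
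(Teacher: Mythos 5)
Your proof is correct and follows essentially the same route as the paper: both write $|h(y)-\E[h(Z)]|\le \E[d_\beta(y,Z)]$, invoke subadditivity of $t\mapsto t^\beta$ to get $|y-Z|^\beta\le |y|^\beta+|Z|^\beta$, and conclude from finiteness of the $\beta$-th absolute moment of $Z$. The only cosmetic differences are that you insert the superfluous intermediate bound $d_\beta(x,y)\le 1+|x-y|^\beta$ (the paper goes directly through $d_\beta(x,y)\le |x-y|^\beta$), and you cite $\E[|Z|^\beta]<\infty$ from Samorodnitsky--Taqqu whereas the paper deduces it from its own density estimate (Lemma~\ref{p-prop}); neither changes the substance.
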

\begin{proof} Let $x\mapsto p(x)$ be the density of $Z\sim S_\al(\de)$.
Write 
\begin{align*}
|h(y)-\E[h(Z)]|&\leq\int p(x)|h(y)-h(x)|dx\leq \int p(x)\big(|x|^{\beta}+|y|^{\beta}\big)dx\leq C_{\alpha}(1+|y|^{\beta}),
\end{align*}
where the last inequality thanks to Lemma \ref{p-prop}.
\end{proof}
Recall the Fortet-Mourier distance (see \cite[Section C.2]{bluebook} )
\begin{align*}
\dfm(X,Y):=\sup_{h\in \mathcal H_\mathrm{FM}}\big|\E[h(X)]-\E[h(Y)]\big|,
\end{align*}
where the class $\mathcal H_\mathrm{FM}$ contains  all Lipschitz functions $h$ such that $\sup_{x\in\R}|h(x)|+\sup_{x\in\R}|h'(x)|\le 1$.
It is plain that $d_{\mathrm{FM}}\le 2\dwb$. In view of the fact that the Fortet-Mourier distance metrizes convergence in distribution of random variables, $\dwb$ is  suitable for assessing the rate of convergence of limit theorems.

\subsection{Solving Stein's equation by Barbour's generator approach}\label{s:OU}
First we show that stable distributions are characterized by the operator 
\begin{align}\label{e:operator_OU}
\mathcal{A}_{\al,\de}f(x)=\mathcal{L}^{\alpha,\delta}f(x)-\frac{1}{\alpha}xf'(x).
\end{align}
Note that $\mathcal{A}_{\al,\de}$ is the generator of the Markov process solving the following Orenstein-Uhlenbeck type stochastic differential equation
\begin{align}\label{e:OU}
\begin{cases} X_{t}=\int_0^t -\frac{1}{\alpha}X_{s}ds+ Z_{t} \\
X_0=x
\end{cases},
\end{align}
where $(Z_{t})_{t\geq0}$ is an $\alpha$-stable L\'evy process with $Z_1\sim S_\al(\de)$, see \cite[Th. 6.7.4]{A}.  Such an equation can be solved explicitly
\begin{align}\label{e:OU_explicit}
X^x_t = xe^{-\frac t \al} + \int_0^t e^{-\frac {t-s} \al} dZ_s,
\end{align}
see \cite[p.105]{sato}, and provides an interpolation between any Dirac mass and $S_\al(\de)$.  The corresponding semigroup $(Q_{t})_{t\geq0}$ can then be used to solve Stein's equation.



\begin{prop}
Let $Y$ be a real-valued random variable. Then $Y\sim S_\al(\de)$ if and only if $\E[\mathcal A_{\al,\de} f(Y)]=0$ for all $f\in C^\infty$ and fast decaying at infinity.
\end{prop}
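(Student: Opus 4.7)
The plan is to identify $S_\al(\de)$ with the unique invariant distribution of the Ornstein--Uhlenbeck type equation \eqref{e:OU} and then transfer that characterization to the infinitesimal level through the generator $\mathcal{A}_{\al,\de}$.

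For the forward (``only if'') direction, I would first verify that $S_\al(\de)$ is invariant under the semigroup $(Q_t)_{t\ge 0}$ associated with \eqref{e:OU}. Using the explicit formula \eqref{e:OU_explicit}, if $Y\sim S_\al(\de)$ is independent of $(Z_t)_{t\ge 0}$, then $X_t^Y = Y e^{-t/\al} + \int_0^t e^{-(t-s)/\al}\,dZ_s$. Computing the characteristic function via independence together with the L\'evy--Khintchine formula for the stochastic integral and the known characteristic exponent of $Z_1$, a direct computation (essentially $\int_0^t e^{-(t-s)}\,ds = 1-e^{-t}$ combined with the homogeneity $|\cdot|^\al$ in the exponent; the case $\al=1$ is analogous modulo the mean-correction term in $\mathcal{L}^{1,0}$) yields $X_t^Y \stackrel{d}{=} Y$ for every $t\ge 0$. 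Invariance means $\E[Q_t f(Y)] = \E[f(Y)]$ for all smooth, fast-decaying $f$; differentiating at $t=0$ and using dominated convergence (justified because $\mathcal{A}_{\al,\de} f$ is bounded and integrable against $S_\al(\de)$ in view of Lemma \ref{p-prop} and the decay of $f$) gives $\E[\mathcal{A}_{\al,\de} f(Y)] = 0$.

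For the reverse (``if'') direction, I would invoke Barbour's generator approach. For a bounded Lipschitz test function $h$, define the candidate Stein solution
\[
f_h(x) = -\int_0^\infty \bigl(Q_t h(x) - \E[h(Z)]\bigr)\, dt.
\]
Convergence of the integral follows from the exponential contraction of the drift term $xe^{-t/\al}$ in \eqref{e:OU_explicit} together with the Lipschitz regularity of $h$; a routine manipulation based on \eqref{e:p_PDE} and the semigroup property gives $\mathcal{A}_{\al,\de} f_h = h - \E[h(Z)]$. If $f_h$ belongs to (or can be approximated within) the class of $C^\infty$ functions with fast decay, then applying the standing hypothesis to $f_h$ yields $\E[h(Y)] = \E[h(Z)]$, and letting $h$ range over a law-determining family forces $Y \sim S_\al(\de)$.

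The main obstacle will be verifying the smoothness and decay of $f_h$ needed to legitimately plug it into the hypothesis. This is exactly the regularity analysis carried out in Section \ref{s:reg_Stein} for Theorem \ref{thm1}, based on the density estimates of Lemma \ref{p-prop}. A clean shortcut that sidesteps the bootstrap here is to first establish the reverse implication for Schwartz test functions $h$, whose Stein solutions $f_h$ inherit $C^\infty$-smoothness and rapid decay from the stable density $p_t$ via dominated convergence. Since $\E[h(Y)] = \E[h(Z)]$ on the Schwartz class already determines the distribution of $Y$, this suffices to conclude.
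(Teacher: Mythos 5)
Your forward direction is essentially the paper's: identify $S_\al(\de)$ as the invariant law of the Ornstein--Uhlenbeck type process and differentiate the invariance identity at $t=0$. The paper reaches invariance by citing the limiting law from \cite[Th.~17.1, Cor.~17.9]{sato} rather than computing $X_t^Y\stackrel{d}{=}Y$ directly, but this is a cosmetic difference.

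Your reverse direction, however, contains a genuine gap, and it is precisely the step you call a ``clean shortcut.'' You assert that when $h$ is Schwartz, the Stein solution
\[
f_h(x)=-\int_0^\infty\bigl(Q_t h(x)-\E[h(Z)]\bigr)\,dt
\]
``inherits rapid decay.'' It does not. For fixed $t>0$ and $|x|\to\infty$ one has $Q_t h(x)\to 0$ (the argument $e^{-t/\al}x$ of $h$ runs off to infinity), so $Q_t h(x)-\E[h(Z)]\to -\E[h(Z)]$, a nonzero constant; the integral only converges because $Q_t h(x)\to\E[h(Z)]$ as $t\to\infty$, and this relaxation happens slower the larger $|x|$ is. Quantitatively, Propositions~\ref{prop3} and \ref{P:fc} show $|f_h(x+w)-f_h(x)|\le C\,(|w|\wedge|w|^\be)$, so $f_h$ \emph{grows} like $|x|^\be$ (or $|x|$). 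Thus $f_h$ is never fast-decaying, and you cannot substitute it into the standing hypothesis, which is only assumed for $C^\infty$ functions with rapid decay. Neither $\mathcal L^{\al,\de}f_h$ nor $xf_h'(x)$ is individually negligible against an arbitrary (possibly non-integrable) $Y$; only their difference $\mathcal A_{\al,\de}f_h=h-\E h(Z)$ is bounded, and truncating $f_h$ to force decay does not commute with the non-local operator $\mathcal L^{\al,\de}$ in any obvious way. So some nontrivial approximation argument is missing.

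The paper bypasses the generator approach entirely in this direction: it interprets the hypothesis as saying that the law of $Y$ is an \emph{infinitesimal} invariant measure in the sense of Albeverio--R\"udiger--Wu \cite{ARW}, invokes \cite[Prop.~3.2]{ARW} in the symmetric case, and in the asymmetric case (Appendix~\ref{a:albevio}) adds an independent standard Gaussian $N$ so that $Y+N$ has a smooth density $\rho$, then applies a Parseval/Fourier argument to derive the ODE $(\log\E[e^{i\lambda(Z+N)}])\widehat\rho(\lambda)=i\lambda\,\widehat\rho\,'(\lambda)$ a.e., whose unique density solution is that of $Z+N$. Deconvolving by the Gaussian (whose Fourier transform never vanishes) gives $Y\sim S_\al(\de)$. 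If you want to keep the Stein-equation route instead, you would need an explicit cutoff/approximation lemma that controls $\E[\mathcal L^{\al,\de}(f_h-f_h\chi_R)(Y)]$ and $\E[Y(f_h-f_h\chi_R)'(Y)]$ uniformly, which is not automatic and is not in the paper.
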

\begin{proof}
Let $Z\sim S_\al(\de)$ and $(X^x_t)_{t\ge 0}$ be the unique pathwise solution \eqref{e:OU_explicit} to the SDE \eqref{e:OU}. Since the L\'evy measure of $Z$ satisfies the condition (17.11) of \cite[Th. 17.1]{sato}, $X^x_t$ converges in distribution as $t\to\infty$ to a random variable with characteristic function $\lambda\mapsto \exp[\int_0^\infty \psi(e^{-\frac{s}{\al}}\lambda) ds]$, where $\psi$ is the principal log of the characteristic function of $Z$. It is readily checked that this characteristic function coincides with \eqref{e:stable_exponent}. As a consequence, $S_\al(\de)$ is the unique invariant distribution of the semigroup $(Q_t)_{t\ge 0}$ associated with $\mathcal A_{\al,\de}$ by \cite[Cor. 17.9]{sato}.  Denote by $\mu$ the distribution of $Z$. We have
\begin{align*}
\int f(x)\mu(dx) = \iint f(y) Q_t(x,dy) \mu(dx)
\end{align*}
for any $t\ge 0$ and smooth $f$. Taking derivative with respect to $t$ at $t=0$ yields $\E[\mathcal A_{\al,\de} f(Z)]=0$, as desired.

Now assume that $\E[\mathcal A_{\al,\de} f(Y)]=0$ for all smooth $f$, namely, the distribution of $Y$ is the infinitesimal invariant distribution of $(Q_t)_{t\ge 0}$ in the sense of Albeverio, Ruediger and Wu \cite{ARW}.  In the symmetric case $\de=0$, such a condition implies that $Y\sim S_\al(0)$, see \cite[Prop. 3.2]{ARW}. This statement  continues to hold in the asymmetric case $\de\neq 0$. We prove this in Appendix \ref{a:albevio}.
\end{proof}

Recall that the Ornstein-Uhlenbeck type process $(X^x_t)_{t\ge 0}$ can also be represented by a time-changed stable L\'evy process. To see this, set $Y_t:= \int_0^t e^{\frac {s} \al} dZ_s$ and $V_t= Y_{\log(1+t)}$. Then $(V_t)_{t\ge 0}$ has independent increments because $(Z_t)_{t\ge 0}$ does. On the other hand, one can prove that
\begin{align}\label{e:V_si}
\E[e^{i\lambda(V_t-V_s)}] = (\E[e^{i\lambda Z}])^{t-s}
\end{align}
for any $t\ge s\ge 0$. One concludes that $(V_t)_{t\ge 0}\overset{d}=(Z_t)_{t\ge 0}$.  In view of \eqref{e:OU_explicit}, one has
\begin{align*}
X^x_t \overset{d}= xe^{-\frac t \al} + e^{-\frac t \al} Z_{e^t-1} \overset{d}= xe^{-\frac t \al} +  Z_{1-e^{-t}},
\end{align*}
where we used the self-similarity of $(Z_t)_{t\ge 0}$ in the second identity. It follows that the transition density of $(Q_t)_{t\ge 0}$, namely the density of $X^x_t$, is given by
\begin{equation}\label{density}
  q(t,x,y)=p_{1-e^{-t}}(y-e^{-t/\alpha}x)=s(t)^{-1/\alpha}p(s(t)^{-1/\alpha}(y-e^{-t/\alpha}x)),
\end{equation}
where $s(t)=1-e^{-t}$ and we used again the self-similarity of $(Z_t)_{t\ge 0}$. The proof of \eqref{e:V_si} is given in Appendix \ref{a:V_si}.

\medskip


%

Now we consider and solve Stein's equation $$\mathcal{A}_{\al,\de}f(x)=\mathcal{L}^{\alpha,\delta}f(x)-\frac{1}{\alpha}xf'(x)=h(x)-\mathbb{E}\big[h(Z)\big]$$ for $h\in \mathcal H_\be.$
Lemma \ref{lm23} below may be explained by semigroup interpolation argument. The operator $\mathcal{A_{\al,\de}}$ generates the semigroup $(Q_{t})_{t\geq0}$ whose transition density satisfies \eqref{density}. It follows that $Q_{0}h=h$ and $Q_{\infty}(h)=\mathbb{E}\big[h(Z)\big].$ Thus, setting $f=-\int_{0}^{\infty}\big(Q_{t}h-Q_{\infty}h\big)dt$ for an appropriate class of $h,$ we have $\mathcal{A_{\al,\de}}f=-\int_{0}^{\infty}\mathcal{A}_{\al,\de}Q_{t}hdt=-\int_{0}^{\infty}\partial_{t}Q_{t}hdt=Q_{0}h-Q_{\infty}h,$ as desired.  In Appendix \ref{a:solve_Stein}, we give a detailed proof.



\begin{lemma}\label{lm23}
Let $Z\sim S_\al(\de)$ with $0<\al\le 1$ and $h\in\mathcal H_\be$ with $0<\be<\al$.
Set
\begin{align}
f(x)&:=-\int_{0}^{\infty}\mathbb{E}\big[h\big(X_{t}^{x}\big)-\E h(Z)\big]dt,\nonumber \\
& = -\int_0^\infty \int
p_{1-e^{-t}}(y-e^{-\frac{t}{\alpha}}x)(h(y)-\E h(Z))dydt \nonumber \\
& = -\int_0^\infty \int p(y)\left[ h((1-e^{-t})^{1/\al}y+e^{-t/\al} x)- h(y)\right] dy dt.\label{phih}
\end{align}
Then
\begin{equation}\label{stablestein}
\mathcal{A}_{\al,\de}f(x)= h(x) - \E h(Z).
\end{equation}
\end{lemma}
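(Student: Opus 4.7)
The plan is to use the Barbour generator approach outlined right before the lemma. Writing $Q_t h(x) := \E[h(X_t^x)]$, the definition reads $f(x) = -\int_0^\infty (Q_t h(x) - \E h(Z))\, dt$, and it suffices to establish: (i) absolute convergence of the integral together with the identification $\lim_{t\to\infty} Q_t h(x) = \E h(Z)$; (ii) the backward equation $\partial_t Q_t h(x) = \mathcal A_{\al,\de} Q_t h(x)$ for $t>0$; and (iii) the commutation of $\mathcal A_{\al,\de}$ with the time integral. Granted these, one telescopes as
\begin{align*}
\mathcal A_{\al,\de} f(x) = -\int_0^\infty \partial_t Q_t h(x)\, dt = Q_0 h(x) - \lim_{t\to\infty} Q_t h(x) = h(x) - \E h(Z).
\end{align*}

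For (i) the key is an explicit coupling: from \eqref{e:OU_explicit} combined with self-similarity we have $X_t^x \overset{d}= x e^{-t/\al} + Z_{1-e^{-t}}$, while from independent stationary increments and self-similarity of $(Z_t)_{t\ge 0}$, $Z \overset{d}= Z_{1-e^{-t}} + e^{-t/\al}\tilde Z$ with $\tilde Z$ an independent copy of $Z$. Coupling the two expressions by using the same $Z_{1-e^{-t}}$ yields $X_t^x - Z = e^{-t/\al}(x - \tilde Z)$. Since every $h \in \mathcal H_\be$ satisfies $|h(u)-h(v)| \le |u-v|^\be$ for $0<\be\le 1$, this gives
\begin{align*}
|Q_t h(x) - \E h(Z)| \le \E|X_t^x - Z|^\be \le C_\be\, e^{-\be t/\al}\bigl(|x|^\be + \E|\tilde Z|^\be\bigr),
\end{align*}
which is integrable at infinity thanks to $\be < \al$ and $\E|Z|^\be < \infty$ (see \cite[p.18]{ST}); near $t=0$ Lemma \ref{l:2.1} gives $|Q_t h(x) - \E h(Z)| \le C(1+\E|X_t^x|^\be)$, which stays locally bounded.

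Step (ii) follows from $Q_t h(x) = \int q(t,x,y)\, h(y)\, dy$ and the smoothness in $x$ of the transition density $q$ defined in \eqref{density}, which is inherited from the smoothness of the stable density $p$; differentiation under the integral sign combined with the Chapman--Kolmogorov property of the semigroup $(Q_t)_{t\ge 0}$ yields $\partial_t Q_t h = \mathcal A_{\al,\de} Q_t h$ pointwise for $t>0$.

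The main obstacle I anticipate is step (iii), namely commuting the non-local operator $\mathcal L^{\al,\de}$ and the drift $-\tfrac{1}{\al} x\partial_x$ with the time integral. The strategy is to produce a $t$-integrable majorant of $|\partial_t Q_t h(x)|$ by differentiating \eqref{density} in $t$, invoking Lemma \ref{p-prop} to control $|\partial_x q(t,x,y)|$ and $|\mathcal L^{\al,\de}_x q(t,x,y)|$, and combining these with the coupling above to absorb the H\"older decay of $h - \E h(Z)$. The target bound has the form $|\partial_t Q_t h(x)| \le C(1+|x|^\be)\,\phi(t)$ with $\phi \in L^1(0,\infty)$ (an integrable singularity at $t=0$ coming from the $s(t)^{-1/\al}$ scaling in \eqref{density} and exponential decay $e^{-\be t/\al}$ at $t=\infty$ from the coupling). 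Once this majorant is in place, Fubini's theorem licenses the interchange in (iii) and completes the proof.
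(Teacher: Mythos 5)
Your overall structure matches the paper's: define $f$ via the Ornstein--Uhlenbeck semigroup $(Q_t)$, prove the backward Kolmogorov equation $\partial_t Q_t h = \mathcal A_{\al,\de}Q_t h$ (the paper's Lemma \ref{A1}), then telescope. Your coupling $X_t^x - Z = e^{-t/\al}(x-\tilde Z)$ in step (i) is a clean way to get the exponential decay that the paper obtains by directly bounding the integrand in \eqref{phih}; both routes are fine.

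However, step (iii) has a genuine gap. You claim that a $t$-integrable majorant of the \emph{pointwise} quantity $|\partial_t Q_t h(x)|$ lets ``Fubini license the interchange.'' That is not sufficient. Once (ii) is known, $\partial_t Q_t h = \mathcal A_{\al,\de}Q_t h$, so an $L^1(dt)$ bound on $\partial_t Q_t h(x)$ only yields the (trivial) identity $-\int_0^\infty \partial_t Q_t h\,dt = Q_0 h - Q_\infty h$; it does not show that $\mathcal A_{\al,\de}\int_0^\infty(Q_t h - \E h(Z))\,dt = \int_0^\infty \mathcal A_{\al,\de}Q_t h\,dt$. The operator $\mathcal A_{\al,\de}$ contains both a singular integral in the spatial increment $u$ and a derivative $\partial_x$, and commuting each of these with $\int_0^\infty dt$ requires, respectively, absolute convergence of the \emph{iterated} integral
\begin{equation*}
\int_0^\infty\!\int_\R \big|Q_t h(x+u)-Q_t h(x) - u\,\partial_x Q_t h(x)\,\mathbf 1_{\{\al=1,|u|\le 1\}}\big|\,\nu_{\al,\de}(du)\,dt < \infty
\end{equation*}
and a locally uniform (in $x$) $L^1(dt)$ dominant for $\partial_x Q_t h(x)$. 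Neither follows from a bound on $|\partial_t Q_t h(x)|$ alone. The ingredient that actually closes the argument — and that your coupling already delivers for free — is the increment estimate $|Q_t h(x+u)-Q_t h(x)|\le e^{-\be t/\al}(|u|\wedge|u|^\be)$ together with $|\partial_x Q_t h(x)|\le e^{-t/\al}$; these are exactly the paper's \eqref{ineq} and \eqref{e:f'}, and they make the double integral absolutely convergent without any $t=0$ singularity. Using them directly (rather than bounding $|\mathcal L^{\al,\de}_x q|$, which does blow up as $s(t)\to 0$) is also cleaner. Note also that the case $\al=1$ requires the compensated form of $\mathcal L^{1,0}$, where the plain increment bound is not integrable against $\nu_{1,0}$ near $u=0$; the paper handles this with a second-order Taylor estimate, and your outline does not address it.
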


Note that the last two identities follow from \eqref{density} and a change of variables.
We end this section by verifying that \eqref{phih} is well-defined.
Since $h\in\mathcal H_\be$, we have
\begin{align*}
&\big|h((1-e^{-t})^{1/\al}y+e^{-t/\al} x)- h(y)\big| \\
&\le e^{-t/\al}|x| + e^{-t\be/\al} |x|^\be +  |y(1-(1-e^{-t})^{1/\al})|\wedge |y(1-(1-e^{-t})^{1/\al})|^\be,
\end{align*}
which is integrable with respect to $1_{t>0}dt\otimes p(y)dy$, as desired.
\section{Study of Stein's equation and Proof of Theorem \ref{thm1}}\label{s:reg_Stein}

\subsection{The regularity estimates of the solution $f$}
Theorem \ref{thm1} follows immediately once some regularity properties of the solution to Stein's equation are in place. Let us state these results first and prove them in Section \ref{sr}. Some of the results below actually play a crucial role in the proof of Theorem \ref{thm2}. Indeed, we are going to make use of the regularity estimates in order to control the error induced by the leave-one-out argument.

\vskip 3mm
$\bullet$ \underline{{\bf $\alpha\in(0,1)$}} {\bf :}
\begin{proposition}\label{prop3}
Let $\alpha \in (0,1)$. For any $h \in \mathcal H_\be$ with $\beta
\in (0,\alpha)$, let $f$ be defined as (\ref{phih}). Then the
following statements hold:

\noindent (i). We have
\begin{equation}  \label{e:F'LeAlp}
\|f'\|_{\infty} \ \le \ \alpha,
\end{equation}
\begin{equation}  \label{e:HolF'}
\sup_{x \neq y} \frac{|f'(x)-f'(y)|}{|x-y|^{\alpha}}\ \le \ C_{\alpha}.
\end{equation}
(ii) For any $x,w\in\mathbb{R},$
\Be  \label{e:Holf}
  |f(x+w)-f(x)|\ \le C_{\alpha,\beta}|w|\wedge|w|^{\beta},
\Ee
\Be  \label{e:FraFBou}
  \|\mathcal{L}^{\alpha,\delta}f\|_{\infty} \ \le C_{\alpha,\beta}.
\Ee
For any $\gamma \in (0,1)$,
\begin{align}\label{regularity1}
|\mathcal{L}^{\alpha,\delta}f(x)-\mathcal{L}^{\alpha,\delta}f(y)|\le C_{\alpha,\beta,\gamma}|x-y|^{\gamma}.
\end{align}
\end{proposition}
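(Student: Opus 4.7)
My plan is to exploit the probabilistic representation
\begin{equation*}
f(x)=-\int_0^\infty \mathbb{E}[h(X_t^x)-h(Z)]\,dt,\qquad X_t^x\stackrel{d}{=}e^{-t/\alpha}x+(1-e^{-t})^{1/\alpha}Z,
\end{equation*}
together with the transition density $q(t,x,y)=p_{1-e^{-t}}(y-e^{-t/\alpha}x)$ and the estimates in Lemma \ref{p-prop}. For part (i), integration by parts in the spatial variable (using $\partial_x q=-e^{-t/\alpha}\partial_y q$) gives
\begin{equation*}
f'(x)=-\int_0^\infty e^{-t/\alpha}\,\mathbb{E}[h'(X_t^x)]\,dt,
\end{equation*}
where $h'$ is the a.e.\ derivative of the Lipschitz function $h$, bounded by $1$; combined with $\int_0^\infty e^{-t/\alpha}\,dt=\alpha$, this gives \eqref{e:F'LeAlp}. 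For \eqref{e:HolF'}, I would write $f'(x)-f'(y)$ as the $t$-integral of $\int h'(z)[q(t,x,z)-q(t,y,z)]\,dz$, bound the spatial integral by $C_\alpha\bigl(e^{-t/\alpha}|x-y|/(1-e^{-t})^{1/\alpha}\wedge 1\bigr)$ via Lemma \ref{p-prop}(3), and evaluate the remaining $t$-integral by splitting at $t_\ast=\log(1+|x-y|^\alpha)$, precisely where the two regimes of the truncation meet.

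For the first two assertions of part (ii), \eqref{e:Holf} is a direct consequence of the defining integral of $f$: the hypothesis $|h(a)-h(b)|\le |a-b|\wedge|a-b|^\beta$ reduces the matter to bounding $\int_0^\infty (e^{-t/\alpha}|w|)\wedge(e^{-t/\alpha}|w|)^\beta\,dt$, which is easily handled by treating $|w|\le 1$ and $|w|>1$ separately. The bound \eqref{e:FraFBou} then follows by feeding \eqref{e:Holf} into the L\'evy-type formula for $\mathcal{L}^{\alpha,\delta}$: the integrand $(|u|\wedge|u|^\beta)|u|^{-1-\alpha}$ is integrable since $\alpha<1$ tames the singularity at $0$ and $\beta<\alpha$ tames the decay at infinity.

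The estimate \eqref{regularity1} is the most substantial step, and I expect the main obstacle to lie here. Since $\mathcal{L}^{\alpha,\delta}f$ is bounded by \eqref{e:FraFBou}, the claim is trivial for $|x-y|\ge 1$, so I may restrict to $|x-y|\le 1$. My plan is to work directly with
\begin{equation*}
\mathcal{L}^{\alpha,\delta}f(x)-\mathcal{L}^{\alpha,\delta}f(y)=\int\bigl[f(x+u)-f(x)-(f(y+u)-f(y))\bigr]\nu_{\alpha,\delta}(du),
\end{equation*}
and to dominate the integrand by the pointwise minimum of three complementary bounds: (a) the telescoping H\"older bound $C_\alpha|u||x-y|^\alpha$, obtained by rewriting the integrand as $\int_0^u[f'(x+r)-f'(y+r)]\,dr$ and using \eqref{e:HolF'}; (b) the triangle bound $2C(|x-y|\wedge|x-y|^\beta)$; and (c) the bound $2C(|u|\wedge|u|^\beta)$ from \eqref{e:Holf}. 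Splitting the $u$-axis at the crossover points of these bounds, and exploiting the interpolation $|g(r,z_1)-g(r,z_2)|\le 2C|r|^{\alpha(1-\theta)}|z_1-z_2|^{\alpha\theta}$ for $g(r,z)=f'(z+r)-f'(z)$ and any $\theta\in[0,1]$, then optimizing $\theta$ and the splitting threshold as functions of the target exponent $\gamma$, should yield the desired bound with a constant $C_{\alpha,\beta,\gamma}$ that blows up as $\gamma\to 1$. The main difficulty is the calibration of the interpolation parameter and the cutoffs so that the final exponent can be pushed arbitrarily close to~$1$ rather than stalling at a strict sub-exponent such as $1-\alpha+\alpha^2$.
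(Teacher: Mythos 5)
Your treatment of (\ref{e:F'LeAlp}), (\ref{e:HolF'}), (\ref{e:Holf}) and (\ref{e:FraFBou}) is correct and runs parallel to the paper's Lemmas~\ref{lem:f'} and~\ref{lem:ph}: differentiating under the semigroup integral gives the $\alpha$-bound on $f'$, the heat-kernel Lipschitz estimate of Lemma~\ref{p-prop}(3) plus a split of the $t$-integral at $B=|x-y|^\alpha$ gives (\ref{e:HolF'}), the metric property of $h$ gives (\ref{e:Holf}), and feeding it into the L\'evy measure gives (\ref{e:FraFBou}).

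The gap is exactly where you feared: your strategy for (\ref{regularity1}) cannot reach arbitrary $\gamma\in(0,1)$. Write $\eta=|x-y|\le1$ and $g(u)=[f(x+u)-f(x)]-[f(y+u)-f(y)]$. The pointwise bounds at your disposal from (\ref{e:HolF'}) and (\ref{e:Holf}) are (a) $|g(u)|\le C_\alpha|u|\eta^\alpha$, (a$'$) $|g(u)|\le C_\alpha\eta|u|^\alpha$, and (b) $|g(u)|\le C_{\alpha,\beta}\eta^\beta$. The critical comparison for large $|u|$ is between (b) and the telescoping bound: the crossover is at $|u|\sim\eta^{\beta-\alpha}$, and feeding each regime into $\int|g(u)|\,|u|^{-1-\alpha}\,du$ and optimizing yields exponent $\beta(1-\alpha)+\alpha^2$, which is strictly less than $\alpha$ (since $\beta<\alpha$). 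Bringing in (a$'$) does not help: the integral $\int_M^\infty \eta|u|^{\alpha-1-\alpha}\,du$ diverges at infinity, so (a$'$) must be replaced by (b) there, recovering the same exponent. No interpolation of bounds that all saturate at the $\alpha$-H\"older modulus of $f'$ can push past this; the obstruction is structural, not a matter of calibration.

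The paper escapes by never estimating the L\'evy integral pointwise. Using \eqref{1.2}, one writes
\begin{equation*}
\mathcal{L}^{\alpha,\delta}f(x)=-\int_0^\infty s(t)^{-1}e^{-t}\,\Big(\int_{\R}(\mathcal{L}^{\alpha,\delta}p)(z)\,\widetilde h\big(s(t)^{1/\alpha}z+e^{-t/\alpha}x\big)\,dz\Big)\,dt
\end{equation*}
and transfers the nonlocal operator onto the stable density $p$, whose cancellations are then exploited \emph{inside} the $z$-integral. Lemma~\ref{lem:tria-p} gives $\int|\mathcal{L}^{\alpha,\delta}p|<\infty$, producing a Lipschitz-in-$x$ bound $C_\alpha|x-y|$ for the inner integral; Lemma~\ref{lem3} gives the complementary bound $C_{\alpha,\beta}s(t)$. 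Interpolating $\min(s(t),\eta)\le s(t)^{1-\gamma}\eta^\gamma$ and using $\int_0^\infty s(t)^{-\gamma}e^{-t}\,dt<\infty$ for every $\gamma<1$ then delivers (\ref{regularity1}) for the full range of $\gamma$. The ingredients you are missing are precisely Lemmas~\ref{lem3} and~\ref{lem:tria-p}, or some other mechanism that sees the oscillation of $\mathcal{L}^{\alpha,\delta}p$; a purely pointwise domination of the L\'evy integrand discards it irretrievably.
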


\vskip 3mm
$\bullet$ \underline{{\bf $\alpha=1$}} {\bf :}
\begin{proposition}\label{P:fc}
Let $\alpha=1$. For any $h \in \mathcal H_\be$ with $\beta \in (0,1)$ and $f$ is defined as (\ref{phih}), we have\\
(i)
\begin{equation}  \label{e:F'LeAlpc}
\|f'\|_{\infty} \ \le \ 1,
\end{equation}
and for any $|x-z|<1$,
\begin{equation}\label{e:holdf'c}
  |f'(x)-f'(z)|\le C\Big(2-\log |x-z|\Big)|x-z|.
\end{equation}
(ii) For any $x,w\in\mathbb{R},$
\Be  \label{e:Holf1}
  |f(x+w)-f(w)|\ \le C_{\beta}|w|\wedge|w|^{\beta},
\Ee
\Be\label{e:FraFBouc}
\|\mathcal{L}^{1,0}f\|_\infty\le C_{\beta}.
\Ee
For any $|x-y|<1$,
\Be \label{e:FraFHolc}
|\mathcal{L}^{1,0}f(x)-\mathcal{L}^{1,0}f(y)|\le C_{\beta}|x-y|\left(1-\log|x-y|\right).
\Ee
\end{proposition}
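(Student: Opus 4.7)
The plan closely parallels the proof of Proposition \ref{prop3}, but the borderline character of $\al=1$ forces the polynomial H\"older moduli there to be replaced by logarithmic ones; these will arise naturally as borderline-divergent integrals at the threshold between integrable and non-integrable behavior. The starting point is the density representation
\[
f(x) = -\int_0^\infty \int p_{1-e^{-t}}(y-e^{-t}x)\bigl[h(y)-\E h(Z)\bigr]\,dy\,dt
\]
obtained from \eqref{phih} and \eqref{density} with $\al=1$. A useful preliminary remark is that every $h\in\mathcal H_\be$ is automatically $1$-Lipschitz on $\RR$ because $|x-y|^\be\le |x-y|$ whenever $|x-y|\ge 1$; hence $h$ admits an a.e.\ derivative with $|h'|\le 1$.

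For part (i), differentiating inside the integral and moving the derivative from $p_{1-e^{-t}}$ to $h$ via integration by parts yields
\[
f'(x) = -\int_0^\infty e^{-t}\!\int p_{1-e^{-t}}(y-e^{-t}x)\,h'(y)\,dy\,dt,
\]
from which $\|f'\|_\infty\le 1$ is immediate. For the modulus of continuity, the gradient estimate Lemma \ref{p-prop}(3) together with $\int p_{1-e^{-t}}\,dy=1$ reduces matters to bounding $\int_0^\infty e^{-t}\min\bigl(e^{-t}|x-z|/(1-e^{-t}),\,1\bigr)\,dt$; the substitution $s=1-e^{-t}$ turns this into an elementary integral that evaluates to $|x-z|\log(1+|x-z|^{-1})$, which via $\log(1+a^{-1})\le 2-\log a$ for $a\in(0,1)$ gives \eqref{e:holdf'c}. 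The H\"older-type bound \eqref{e:Holf1} follows from $\|f'\|_\infty\le 1$ when $|w|\le 1$; when $|w|>1$, it comes from a direct estimate of the integrand of \eqref{phih} via $|h(u+e^{-t}w)-h(u)|\le |e^{-t}w|\wedge|e^{-t}w|^\be$ after splitting the $t$-integral at $t=\log|w|$.

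The uniform bound \eqref{e:FraFBouc} cannot be read off Stein's equation, since the right-hand side $h(x)-\E h(Z)+xf'(x)$ involves two unbounded terms that must cancel; instead I shall apply $\mathcal L^{1,0}$ to the semigroup form of $f$. A direct calculation that exploits the change of variables $v=e^{-t}u$ in the $\nu_{1,0}$-integral together with the symmetry $\int_{e^{-t}<|v|\le 1}v^{-1}\,dv=0$ (allowing the $t$-dependent principal-value cutoff to collapse to the standard one), combined with $\int\mathcal L^{1,0}p_s(z)\,dz=0$, leads to
\[
\mathcal L^{1,0}f(x) = -\int_0^\infty e^{-t}\!\int\bigl(h(z+e^{-t}x)-h(e^{-t}x)\bigr)\mathcal L^{1,0}p_s(z)\,dz\,dt.
\]
Combining $|h(z+a)-h(a)|\le|z|\wedge|z|^\be$ with Lemma \ref{p-prop}(2) then reduces \eqref{e:FraFBouc} to integrability of $e^{-t}\log(1+(1-e^{-t})^{-1})$ in $t$. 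The main obstacle is \eqref{e:FraFHolc}: the analogous formula for $\mathcal L^{1,0}f(x)-\mathcal L^{1,0}f(y)$ carries a double difference $h(z+e^{-t}x)-h(e^{-t}x)-h(z+e^{-t}y)+h(e^{-t}y)$ which I shall bound by the minimum of its $z$-Lipschitz estimate $2(|z|\wedge|z|^\be)$ and its $x$-Lipschitz estimate $2(r\wedge r^\be)$, where $r=e^{-t}|x-y|$. Splitting the $z$-integral at $|z|=r$ and $|z|=1$ and invoking Lemma \ref{p-prop}(2) controls the inner integral by $C[\log(1+r/s)+r]$; the substitution $s=1-e^{-t}$ then reduces the outer $t$-integral to $\int_0^1\log(1+(1-u)|x-y|/u)\,du$, whose explicit antiderivative equals $-|x-y|\log|x-y|/(1-|x-y|)$ and yields the required bound $C_\be|x-y|(1-\log|x-y|)$ for $|x-y|<1$. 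The delicate part throughout is to track which of the two Lipschitz bounds is active in each $(t,z)$ regime --- this borderline bookkeeping is precisely what produces the extra logarithmic factor that distinguishes $\al=1$ from the case $\al<1$ treated in Proposition \ref{prop3}.
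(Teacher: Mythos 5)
Your proof is essentially correct, and part (i) and the first bound \eqref{e:Holf1} of part (ii) follow the same route as the paper (differentiate under the integral, apply Lemma \ref{p-prop}(3), and perform the substitution $s=1-e^{-t}$; your explicit formula $|x-z|\log(1+|x-z|^{-1})$ is a slightly more computed version of the paper's split-at-$B=|x-z|$ argument in Lemma \ref{lem:f'}). Where you diverge from the paper is in part (ii) for \eqref{e:FraFBouc} and \eqref{e:FraFHolc}. For the uniform bound the paper does \emph{not} pass $\mathcal L^{1,0}$ through the semigroup at all: in Lemma \ref{lem:ph} it uses the already-established $\log$-H\"older modulus of $f'$ to produce a second-order Taylor remainder $|f(x+w)-f(x)-f'(x)w|\le Cw^2\log(1/|w|)$ and integrates this directly against the L\'evy measure, splitting at $|w|=1$ --- a more elementary argument that avoids any statement about $\mathcal L^{1,0}p_s$. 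Your route via the identity $\mathcal L^{1,0}f(x)=-\int_0^\infty e^{-t}\int(h(z+e^{-t}x)-h(e^{-t}x))\mathcal L^{1,0}p_s(z)\,dz\,dt$ also works (it is equivalent to the paper's \eqref{1.2} after a scaling change of variables and subtracting a constant using $\int\mathcal L^{1,0}p_s=0$), but note you still need the justification for interchanging $\mathcal L^{1,0}$ and the $t$-integral, which the paper supplies via Fubini and Lemma \ref{p-prop}(3). For \eqref{e:FraFHolc} the paper constructs two bounds on the entire inner integral --- $Ce^{-t}|x-y|\int|\mathcal L^{1,0}p|$ (using Lemma \ref{lem:tria-p}) and $C_\be s(t)^\be$ (using the integration-by-parts Lemma \ref{lem3}) --- and then splits the outer $t$-integral at $B=|x-y|^{1/\be}$. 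You instead bound the double $h$-difference pointwise by $\min(2(|z|\wedge|z|^\be),\,2(r\wedge r^\be))$ with $r=e^{-t}|x-y|$, split the $z$-integral at $|z|=r$ and $|z|=1$, and use Lemma \ref{p-prop}(2) directly; this produces the inner bound $C[\log(1+r/s)+r]$ (the elementary inequality $\tfrac{r}{s+r}\le\log(1+r/s)$ absorbs the middle region) and the explicit $t$-integral $-\tfrac{a\log a}{1-a}$, which is indeed $\le C a(1-\log a)$ uniformly on $(0,1)$ after noting the removable singularity at $a=1$. Both of your alternatives are genuinely different from, but no harder than, the paper's Lemma \ref{lem3}/\ref{lem:tria-p} machinery; the paper's approach has the advantage that Lemma \ref{lem3} is reused (in Section \ref{s:proofs}) whereas your pointwise-min bookkeeping is self-contained but single-purpose.
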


\subsection{Proof of Theorem \ref{thm1}}
Let $Z\sim S_\al(\de)$, one has
\begin{align*}
d_{\mathrm{W}_\beta}(X,Z) &= \sup_{h\in \mathcal H_\beta} |\E h(X) -\E h(Z) | \le \sup \E [\mathcal A_{\al,\de} f(X)]
\end{align*}
where the supremum runs over all the functions $f$ of the form \eqref{phih} with $h\in\mathcal H_\be$ and Parts a.-c.  follow from Propositions \ref{prop3} and \ref{P:fc}. \qed

\subsection{Proof of Propositions \ref{prop3} and \ref{P:fc}}\label{sr}

Now we prove Propositions \ref{prop3} and \ref{P:fc} through some lemmas below.

\begin{lemma}\label{lem:f'}
Let $\alpha \in (0,1]$. For any $h \in \mathcal H_\be$ with $\beta
\in (0,\alpha)$, let $f$ be defined as (\ref{phih}).

 (1) We have
$$\|f'\|_\infty\le \alpha.$$

(2) If $\alpha\in(0,1)$, then
\begin{equation*}
  |f'(x)-f'(z)|\le C_{\alpha}|x-z|^{\alpha}.
\end{equation*}
If $\alpha=1$, then for any $|x-z|<1$,
\begin{equation*}
  |f'(x)-f'(z)|\le C\Big(2-\log |x-z|\Big)|x-z|.
\end{equation*}
\end{lemma}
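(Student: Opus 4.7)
The plan is to differentiate \eqref{phih} under the integral sign, shift the $x$-dependence out of $h'$ by a change of variables, and then invoke the translation estimate in Lemma~\ref{p-prop}(3). Since $h\in\mathcal H_\beta$ is $1$-Lipschitz, Rademacher's theorem gives $|h'|\le 1$ a.e.; dominated convergence (the integrand being dominated by $e^{-t/\alpha}p(y)$) then yields
\[
f'(x) = -\int_0^\infty e^{-t/\alpha} \int p(y)\, h'\!\bigl(s(t)^{1/\alpha} y + e^{-t/\alpha} x\bigr)\,dy\, dt,\qquad s(t):=1-e^{-t}.
\]
Since $\int p = 1$ and $\|h'\|_\infty\le 1$, part (1) is immediate: $\|f'\|_\infty \le \int_0^\infty e^{-t/\alpha}dt = \alpha$.

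For part (2), the substitution $u = s(t)^{1/\alpha}y + e^{-t/\alpha}x$, applied separately to $f'(x)$ and $f'(z)$, transfers the $x$-dependence into the argument of $p$:
\[
f'(x)-f'(z) = -\int_0^\infty \frac{e^{-t/\alpha}}{s(t)^{1/\alpha}}\int \Bigl[p\!\bigl(\tfrac{u-e^{-t/\alpha}x}{s(t)^{1/\alpha}}\bigr)-p\!\bigl(\tfrac{u-e^{-t/\alpha}z}{s(t)^{1/\alpha}}\bigr)\Bigr]h'(u)\,du\,dt.
\]
A further substitution $w = (u-e^{-t/\alpha}x)/s(t)^{1/\alpha}$, together with $\|h'\|_\infty\le 1$ and Lemma~\ref{p-prop}(3) applied at $t=1$, produces
\[
|f'(x)-f'(z)|\le C_\alpha \int_0^\infty e^{-t/\alpha}\,\min\!\Bigl(\frac{e^{-t/\alpha}|x-z|}{s(t)^{1/\alpha}},\,1\Bigr)\,dt.
\]

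It remains to evaluate this deterministic time integral. Setting $r=|x-z|$ and $t^{\ast} = \log(1+r^\alpha)$, one checks that $e^{-t/\alpha}r/s(t)^{1/\alpha}=r/(e^t-1)^{1/\alpha}\ge 1$ precisely when $t\le t^{\ast}$. The contribution on $[0,t^{\ast}]$ is $\alpha\bigl(1-(1+r^\alpha)^{-1/\alpha}\bigr)$, which is bounded by $r^\alpha$ via the elementary inequality $1-(1+x)^{-1/\alpha}\le x/\alpha$. On $(t^{\ast},\infty)$, the change of variable $s=1-e^{-t}$ reduces the remainder to
\[
r\int_{s^{\ast}}^1 \frac{(1-s)^{2/\alpha-1}}{s^{1/\alpha}}\,ds,\qquad s^{\ast} = \frac{r^\alpha}{1+r^\alpha}.
\]

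For $\alpha\in(0,1)$, splitting this integral at $s=1/2$, the piece near $s=1$ is a finite constant while the piece near $s=s^{\ast}$ is controlled by $C_\alpha (s^{\ast})^{1-1/\alpha}\le C_\alpha\,r^{\alpha-1}$; multiplication by the prefactor $r$ produces the desired $C_\alpha r^\alpha$ for small $r$, and the trivial bound $|f'(x)-f'(z)|\le 2\alpha$ takes care of $r\ge 1$. For $\alpha=1$, the integrand simplifies to $(1-s)/s = 1/s - 1$, and direct integration yields $-\log s^{\ast}-(1-s^{\ast})$, which for $r<1$ is dominated by $2-\log r$; multiplication by $r$ gives the announced bound $Cr(2-\log r)$. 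The main technical obstacle is the careful bookkeeping of the two regimes in $t$ and, in the borderline case $\alpha=1$, the clean extraction of the logarithmic factor from the near-origin singularity at $s=s^{\ast}$.
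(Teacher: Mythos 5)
Your proof is correct and follows essentially the same strategy as the paper: differentiate under the integral sign, change variables to move the $x$-dependence into the argument of $p$, invoke Lemma~\ref{p-prop}(3) to get the integrand $e^{-t/\alpha}\bigl(s(t)^{-1/\alpha}e^{-t/\alpha}|x-z|\wedge 1\bigr)$, and then split the $t$-integral into a ``small $t$'' and ``large $t$'' regime. The only differences are cosmetic: the paper splits at $B=|x-z|^\alpha$ and uses the elementary bound $(e^t-1)^{-1/\alpha}\le t^{-1/\alpha}$ on the tail, whereas you locate the exact crossover $t^*=\log(1+r^\alpha)$ and compute the tail after the substitution $s=1-e^{-t}$; both routes produce $C_\alpha r^\alpha$ for $\alpha\in(0,1)$ and the $(2-\log r)r$ bound for $\alpha=1$ with the same amount of work.
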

\noindent\textbf{Proof:}
(1) Note that $\norm{h'}_\infty\le 1$, from which it is readily checked that one can differentiate under the integral sign in \eqref{phih}. Hence
\begin{align}\label{e:f'}
f'(x)= -\int_0^\infty\int e^{-t/\al} p(y)h'(s(t)^{1/\al} y + e^{-t/\al} x)dy dt,
\end{align}
yielding $|f'(x)|\le \al$ for all $x\in\R$.

(2)
Choose $B=|x-z|^\alpha$. Applying successively \eqref{e:f'}, change of variables, and Lemma \ref{p-prop}, we get that
\begin{align*}
  &\quad|f'(x)-f'(z)|\\
  &\le\int_0^\infty e^{-t/\alpha} \int_{\R}|p(y-s(t)^{-1/\alpha}e^{-t/\alpha}x)-p(y-s(t)^{-1/\alpha}e^{-t/\alpha}z)||h'(ys(t)^{1/\alpha})|\,dy\,dt\\
  &\le C_{\alpha}\|h'\|_\infty\int_0^\infty e^{-t/\alpha} ((s(t)^{-1/\alpha}e^{-t/\alpha}|x-z|)\wedge1)\,dt\\
  &\le C_{\alpha}\Big(\int_0^B e^{-t/\alpha} dt+\int_B^\infty e^{-2t/\alpha} s(t)^{-1/\alpha}dt|x-z|\Big)\\
  &\le C_{\alpha}\Big(B+\int_B^\infty t^{-1/\alpha}e^{-t/\alpha} dt|x-z|\Big),
\end{align*}
where in the forth inequality, we use the fact that $s(t)^{-1/\alpha}e^{-t/\alpha}=(e^t-1)^{-1/\alpha}\le t^{-1/\alpha}$.
If $\alpha\in(0,1)$, then
$$|f'(x)-f'(z)|\le C_{\alpha}\Big(B+\int_B^\infty t^{-1/\alpha} dt|x-z|\Big)\leq C_{\alpha}|x-z|^{\alpha}.$$
If $\alpha=1$, then for $B=|x-z|<1$,
\begin{align*}
 |f'(x)-f'(z)|&\le C\Big(B+\Big(\int_B^1 t^{-1}dt+\int_1^\infty e^{-t}\,dt\Big)|x-z|\Big)\\
 &\le C\Big(2-\log |x-z|\Big)|x-z|.
\end{align*}
\hfill$\Box$

\begin{lemma}\label{lem:ph}
Let $\alpha \in (0,1]$ and $ h\in \mathcal H_\be$ with $\beta \in (0,\alpha)$. Let $f$ be
defined as \eqref{phih}. If $\alpha\in(0,1)$, then for any $x,w\in\mathbb{R},$
\begin{align*}
|f(x+w)-f(x)|\leq C_{\alpha,\beta}|w|\wedge|w|^{\beta},
\end{align*}
\begin{align*}
\|\mathcal{L}^{\alpha,\delta}f\|_\infty\le C_{\alpha,\beta}.
\end{align*}
If $\alpha=1$ and $\delta=0,$ then for any $x,w\in\mathbb{R},$
\begin{align*}
|f(x+w)-f(x)|\leq C_{\beta}|w|\wedge|w|^{\beta},
\end{align*}
\begin{align*}
\|\mathcal{L}^{1,0}f\|_\infty\le C_{\beta}.
\end{align*}
\end{lemma}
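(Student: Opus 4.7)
The plan is to treat the two assertions separately, starting with the Hölder-type estimate on the increment of $f$, and then using it to control $\mathcal{L}^{\alpha,\delta}f$ against the Lévy measure $\nu_{\alpha,\delta}$. The case $\alpha=1$ will need an extra correction term coming from the principal-value compensator.

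\emph{Step 1: Bound on $|f(x+w)-f(x)|$.} Starting from the third line of \eqref{phih} and subtracting, we obtain
\begin{equation*}
f(x+w)-f(x)=-\int_{0}^{\infty}\!\int p(y)\Bigl[h\bigl(s(t)^{1/\alpha}y+e^{-t/\alpha}(x+w)\bigr)-h\bigl(s(t)^{1/\alpha}y+e^{-t/\alpha}x\bigr)\Bigr]dy\,dt.
\end{equation*}
Since $h\in\mathcal{H}_{\beta}$, the integrand is controlled by $d_{\beta}(e^{-t/\alpha}w,0)=(e^{-t/\alpha}|w|)\wedge(e^{-t\beta/\alpha}|w|^{\beta})$. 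Integrating out $y$ and $t$ yields \emph{two} bounds simultaneously,
\begin{equation*}
|f(x+w)-f(x)|\le|w|\int_{0}^{\infty}e^{-t/\alpha}dt=\alpha|w|,\quad |f(x+w)-f(x)|\le|w|^{\beta}\int_{0}^{\infty}e^{-t\beta/\alpha}dt=\tfrac{\alpha}{\beta}|w|^{\beta},
\end{equation*}
and taking their minimum gives $|f(x+w)-f(x)|\le C_{\alpha,\beta}\bigl(|w|\wedge|w|^{\beta}\bigr)$, which proves \eqref{e:Holf} and \eqref{e:Holf1} in one stroke.

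\emph{Step 2: Bound on $\mathcal{L}^{\alpha,\delta}f$ when $\alpha\in(0,1)$.} Plugging Step 1 into the definition \eqref{op} and using the explicit form of $\nu_{\alpha,\delta}$,
\begin{equation*}
|\mathcal{L}^{\alpha,\delta}f(x)|\le C_{\alpha,\beta}\int_{\mathbb{R}}\bigl(|u|\wedge|u|^{\beta}\bigr)\nu_{\alpha,\delta}(du)\le C_{\alpha,\beta}d_{\alpha}\Bigl[\int_{0}^{1}u^{-\alpha}du+\int_{1}^{\infty}u^{\beta-1-\alpha}du\Bigr]=C_{\alpha,\beta},
\end{equation*}
the two integrals being finite exactly because $\alpha<1$ and $\beta<\alpha$; this proves \eqref{e:FraFBou}.

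\emph{Step 3: Bound on $\mathcal{L}^{1,0}f$.} The compensator forces us to split the $u$-integral. For $|u|>1$ Step 1 still gives $|f(x+u)-f(x)|\le C_{\beta}|u|^{\beta}$, and $\int_{|u|>1}|u|^{\beta-2}du<\infty$ because $\beta<1$. For $|u|\le 1$ we use the identity $f(x+u)-f(x)-uf'(x)=\int_{0}^{u}[f'(x+s)-f'(x)]ds$ together with the log-Lipschitz estimate from Lemma \ref{lem:f'}(2), namely $|f'(x+s)-f'(x)|\le C(2-\log|s|)|s|$ for $|s|<1$, to obtain
\begin{equation*}
|f(x+u)-f(x)-uf'(x)|\le C\int_{0}^{|u|}(2-\log s)s\,ds\le Cu^{2}(1-\log|u|),
\end{equation*}
and $\int_{0}^{1}(1-\log u)\,du<\infty$ so the inner piece also integrates against $|u|^{-2}du$. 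Combining the two pieces gives $\|\mathcal{L}^{1,0}f\|_{\infty}\le C_{\beta}$, which proves \eqref{e:FraFBouc}.

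The main subtlety lies in Step 3: the Lévy measure $\nu_{1,0}$ has a non-integrable singularity at the origin, so the boundedness of $f$-increments is not enough and one really needs the second-order Taylor expansion together with the logarithmically improved modulus of continuity of $f'$. Everything else is a routine integration exercise once Step 1 is in hand.
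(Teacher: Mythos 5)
Your proof is correct and follows essentially the same route as the paper: both derive the increment bound on $f$ directly from the representation \eqref{phih} and the $d_\beta$-Lipschitz property of $h$, both integrate that bound against $\nu_{\alpha,\delta}$ for $\alpha<1$, and both handle $\alpha=1$ by Taylor-expanding to second order and invoking the log-Lipschitz modulus of $f'$ from Lemma \ref{lem:f'}(2). The only cosmetic difference is that you split the increment estimate into two separate integrals before taking a minimum, whereas the paper folds the two cases into the single bound $e^{-\beta t/\alpha}(|w|\wedge|w|^{\beta})$.
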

\noindent\textbf{Proof:}
For $\alpha \in (0,1]$, one has by \eqref{phih}
\begin{multline*}
 f(x+w)-f(x) \\ =-\int_0^\infty \int_{\R}p(z)(h(s(t)^{-1/\alpha}z+e^{-t/\alpha}(x+w))-h(s(t)^{-1/\alpha}z+e^{-t/\alpha}x))\,dz\,dt.
\end{multline*}
Thus, for $h \in\mathcal H_\be$ with $\beta \in (0,\alpha)$,
\begin{equation*}
  |f(x+w)-f(x)|\le \int_0^\infty \int_{\R}p(z)e^{-\beta t/\alpha}\,dz\,dt(|w|^\beta\wedge|w|)=\frac{\alpha}{\beta}(|w|^\beta\wedge|w|).
\end{equation*}
It follows that , for $\alpha \in (0,1)$,
$$|\mathcal{L}^{\alpha,\delta}f(x)|\le d_{\alpha}\int_{\R}\frac{|f(x+w)-f(x)|}{|w|^{1+\alpha}}\,dw\le \frac{\alpha d_\alpha}{\beta} \int_{\R}\frac{|w|^\beta\wedge|w|}{|w|^{1+\alpha}}\,dw\le C_{\alpha,\beta}.$$

Now it remains to bound $\norm{\mathcal L^{1,0}f}_\infty$.  By Lemma \ref{lem:f'}, for $|w|\le 1$, one has
\begin{align*}
|f(x+w)-f(x)-f'(x)w| &\le \int_0^{|w|} |f'(x+u)-f'(x)| du \\
&\le C\int_0^{|w|} u(2+\log(1/u)) du\le C w^2 \log (1/|w|),
\end{align*}
It follows  that
\begin{align*}
  |\mathcal{L}^{1,0}f(x)|&\le C\Big(\int_{|w|\le1} \log (1/|w|)\,dw+\int_{|w|>1}|w|^{\beta-2}\,dw\Big)\leq C_{\beta}.
\end{align*}
\qed


\begin{lemma}\label{lem3}
Let $\alpha \in (0,1]$ and $h \in \mathcal H_\be$ with $\beta \in (0,\alpha)$. \\
(1) If $\alpha\in(0,1)$, then  for any $a>0$,
$$\left|\int_{\R}\mathcal{L}^{\alpha,\delta}p(y)h(ay)\,dy\right|\le C_{\alpha,\beta}a^{\alpha}.$$
(2) If $\alpha=1$ and $\delta=0$, then for any $a>0$,
$$\left|\int_{\R}\mathcal{L}^{1,0}p(y)h(ay)\,dy\right|\le C_{\alpha,\beta}(a^{\beta}+a).$$
\end{lemma}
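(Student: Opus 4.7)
The plan is to reduce the estimate to a weighted moment bound on the increment $h(ay)-h(0)$ using two ingredients: the fact that $\int_{\R}\mathcal L^{\alpha,\delta}p(y)\,dy=0$, and the pointwise decay estimate $|\mathcal L^{\alpha,\delta}p(y)|\le C_\alpha(1+|y|)^{-\alpha-1}$ from Lemma \ref{p-prop}(2). The vanishing of the integral follows by Fubini from the definition \eqref{op} and translation invariance $\int (p(y+u)-p(y))\,dy=0$ (plus $\int p'(y)\,dy=0$ in the case $\alpha=1$); Fubini is justified using Lemma \ref{p-prop}(3) together with the fact that $\int(|u|\wedge 1)\,\nu_{\alpha,\delta}(du)<\infty$ when $\alpha\in(0,1)$, and the analogous symmetric-cancellation estimate when $\alpha=1,\delta=0$. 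Therefore
\[
\int_{\R}\mathcal L^{\alpha,\delta}p(y)h(ay)\,dy=\int_{\R}\mathcal L^{\alpha,\delta}p(y)\bigl(h(ay)-h(0)\bigr)\,dy,
\]
and the hypothesis $h\in\mathcal H_\beta$ yields $|h(ay)-h(0)|\le |ay|\wedge|ay|^{\beta}$.

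Combining these estimates, it suffices to bound
\[
I(a):=\int_{\R}\frac{|ay|\wedge|ay|^{\beta}}{(1+|y|)^{\alpha+1}}\,dy
\]
from above by $C_{\alpha,\beta}a^{\alpha}$ when $\alpha\in(0,1)$ and by $C_{\beta}(a+a^{\beta})$ when $\alpha=1$. I would split the region of integration at $|y|=1/a$, using $|ay|\wedge|ay|^\beta=|ay|$ for $|y|\le 1/a$ and $|ay|^\beta$ for $|y|>1/a$. In the tail region the integral is a direct power-law computation producing $a^\beta\cdot a^{\alpha-\beta}=a^\alpha$ (resp.\ $a$ when $\alpha=1$). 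In the central region one splits further at $|y|=1$ to separate the $(1+|y|)^{\alpha+1}\asymp 1$ part from the polynomial decay part; for $\alpha\in(0,1)$ this produces $a\cdot a^{\alpha-1}=a^{\alpha}$, while for $\alpha=1$ a logarithm $a\log(1/a)$ appears, which is absorbed into $a^\beta$ via $a\log(1/a)\le C_\beta a^\beta$ for $0<a\le 1$. The case $a>1$ is handled analogously and turns out to be even easier, the dominant contribution coming from the tail.

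The main subtlety I expect is twofold. First, the rigorous justification that $\int\mathcal L^{\alpha,\delta}p=0$ in the case $\alpha=1$, $\delta=0$, where the compensator $up'(y)\mathbf 1_{|u|\le 1}$ must be handled carefully in Fubini; here the symmetry of $\nu_{1,0}$ and the integrability of $\int |u|^2\wedge 1\,\nu_{1,0}(du)$ together with the decay of $p'$ from Lemma \ref{p-prop}(4) suffice. Second, the logarithmic loss in the $\alpha=1$ computation, which is the reason the bound in (2) takes the slightly asymmetric form $a^\beta+a$ rather than $a^\alpha=a$: for $a\downarrow 0$ the logarithmic factor forces us to trade one power of $a$ for the Hölder exponent $\beta$, explaining the presence of both terms in the final bound.
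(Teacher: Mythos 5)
Your proposal is correct and gives a genuinely different route to the same bound. The paper unfolds $\mathcal{L}^{\alpha,\delta}p$ as an integral against the L\'evy measure, applies Fubini to pass the $y$-integration inside so that $h$ appears only through the increment $h(az-aw)-h(az)$, and then uses the H\"older modulus of $h$ directly; in the case $\alpha=1$ it must additionally split the L\'evy measure into small and large jumps, and for the compensated small-jump part perform an integration by parts (trading $h$ for $h'$) and invoke the pointwise estimate on $p'$ from Lemma~\ref{p-prop}(4). You instead exploit the centering $\int_\R\mathcal{L}^{\alpha,\delta}p(y)\,dy=0$ (justified via Fubini and translation invariance, exactly as you describe) to replace $h(ay)$ with $h(ay)-h(0)$, then use the pointwise decay $|\mathcal{L}^{\alpha,\delta}p(y)|\le C_\alpha(1+|y|)^{-\alpha-1}$ from Lemma~\ref{p-prop}(2) and finish with a scalar weighted-moment integral $I(a)=\int_\R\frac{|ay|\wedge|ay|^\beta}{(1+|y|)^{\alpha+1}}\,dy$. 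Your computation of $I(a)$ is correct: for $\alpha\in(0,1)$ the split at $|y|=1/a$ (and then at $|y|=1$) gives $I(a)\lesssim a+a^\alpha\lesssim a^\alpha$, and for $\alpha=1$ the middle region $1<|y|<1/a$ produces the expected $a\log(1/a)$ which you correctly absorb into $a^\beta$ for $a\le 1$ (and for $a\ge 1$ everything is controlled by $a^\beta$). The main advantage of your argument is that it treats the two cases $\alpha\in(0,1)$ and $\alpha=1$ uniformly and avoids both the Fubini-with-$h$ step and the integration by parts on the compensated part; the main advantage of the paper's argument is that it works directly from the jump structure of $\mathcal{L}^{\alpha,\delta}$ without needing the pointwise decay of $\mathcal{L}^{\alpha,\delta}p$, so the scaling $a^\alpha$ is visible as a pure change of variables in the L\'evy integral. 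Both are sound.
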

\noindent\textbf{Proof:}
(1)
Let $k_\delta(x)=(1+\delta)\textbf{1}_{(0,\infty)}(x)+(1-\delta)\textbf{1}_{(-\infty,0]}(x)$.
By Fubini's theorem (justified by the fact that $\norm{h}_\infty<\infty$, see Lemma \ref{l:2.1}), we have that,
for any $a>0$,
\begin{align*}
  \left|\int_{\R}(\mathcal{L}^{\alpha,\delta}p)(z)h\left(az\right)\,dz\right|&\leq d_{\alpha}\left|\int_{\R}\int_{\R}
  \frac{(p(z+w)-p(z))k_\delta(w)}{2|w|^{1+\alpha}}
  h\left(az\right)\,dw\,dz\right|\\
  &=d_{\alpha}\left|\int_{\R}\int_{\R}\frac{p(z)k_\delta(w)}{2|w|^{1+\alpha}}(h(az-aw)-h(az))\,dw\,dz\right|\\
  &\le d_{\alpha}a^{\alpha}\int_{\R}\,dw\int_{\R}\frac{p(z)|w|\wedge|w|^\beta}{|w|^{1+\alpha}}\,dz\leq C_{\alpha,\beta}a^{\alpha}.
\end{align*}
(2) Note that
\begin{align*}
  &\mathcal{L}^{1,0}p(x)=\frac{d_{1}}{2}\int_{|w|>1}\frac{p(x+w)-p(x)}{w^{2}}\,dw+\frac{d_{1}}{2}\int_{|w|\le 1}\frac{p(x+w)-p(x)-p'(x)w}{w^2}\,dw.
\end{align*}
By Fubini's theorem,
\begin{align*}
  &\quad\left|\int_{\R}\int_{|w|>1}\frac{p(z+w)-p(z)}{|w|^{2}}h\left(az\right)\,dw\,dz\right|\\
 &=\left|\int_{|w|>1}\int_{\R}\frac{p(z)}{|w|^{2}}(h(az-aw)-h(az))\,dz\,dw\right|\\
 &\le a^{\beta}\int_{|w|>1}\,dw\int_{\R}\frac{p(z)|w|^\beta}{|w|^{2}}\,dz\leq C_{\beta}a^{\beta}.
\end{align*}
Applying Fubini's theorem, integration by parts and the estimate of $p'(x)$ (Lemma \ref{p-prop}), we get
\begin{align*}
  &\quad\left|\int_{\R}\int_{|w|\le 1}\frac{p(z+w)-p(z)-p'(z)w}{|w|^{2}}h\left(az\right)\,dw\,dz\right|\\
 &=\left|\int_{|w|\le 1}\frac{1}{|w|^{2}}\,dw\int_{\R}\left(\int_0^w(p'(z+u)-p'(z))\,du\right)h\left(az\right)\,dz\right|\\
 &=a\left|\int_{|w|\le 1}\frac{1}{|w|^{2}}\,dw\int_{\R}\left(\int_0^w(p(z+u)-p(z))\,du\right)h'\left(az\right)\,dz\right|\\
 &\le Ca\int_{|w|\le 1}\frac{1}{|w|^{2}}\,dw\int_{\R}\left(\int_0^{|w|}|u|(p(z+u)+p(z))\,du\right)\,dz\leq Ca.
\end{align*}
Thus, the assertion is proved.
\hfill$\Box$

\begin{lemma}\label{lem:tria-p}
Let $\alpha \in (0,1)$ or $\alpha=1$ with $\delta=0.$ Then
\begin{equation*}
\int_{\R}\left|\mathcal{L}^{\alpha,\delta}p(z)\right|\,dz\le C_{\alpha}
\end{equation*}
\end{lemma}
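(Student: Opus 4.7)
The plan is to reduce the assertion directly to the pointwise bound already established in Lemma \ref{p-prop}(2). Specializing that estimate to $t=1$ yields
\[
|\mathcal{L}^{\alpha,\delta}p(x)|\ \le\ \frac{C_\alpha}{(1+|x|)^{\alpha+1}}
\]
for all $x\in\R$, both in the regime $\alpha\in(0,1)$ (for any admissible $\delta\in(-1,1)$) and in the symmetric Cauchy case $\alpha=1$, $\delta=0$ covered by Lemma \ref{p-prop}. Note that $\mathcal{L}^{\alpha,\delta}p$ is well-defined pointwise thanks to the smoothness and decay of $p$, so there is no delicate issue of principal-value interpretation to worry about (in particular, in the $\alpha=1$ case the drift-correction term in \eqref{op} is integrable against $p'$ by Lemma \ref{p-prop}(4) applied at $t=1$).

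Given this pointwise bound, I would simply integrate:
\[
\int_{\R}|\mathcal{L}^{\alpha,\delta}p(z)|\,dz\ \le\ C_\alpha \int_{\R}\frac{dz}{(1+|z|)^{\alpha+1}}\ =\ \frac{2C_\alpha}{\alpha}\ \le\ C_\alpha,
\]
where the integral converges because $\alpha+1>1$.

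There is essentially no obstacle here: once Lemma \ref{p-prop}(2) is in hand, the statement is a one-line corollary obtained by integrating an $L^1$-integrable majorant. The only thing one might want to double-check is that the constant hidden in Lemma \ref{p-prop}(2) is uniform in $\delta\in(-1,1)$ for the case $\alpha\in(0,1)$, but this is clear from the proof of that lemma (which bounds the symmetric L\'evy-type integrand $(p(x+y)-p(x))/|y|^{1+\alpha}$ independently of the skewness $\delta$).
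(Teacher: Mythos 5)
Your proposal is correct but takes a genuinely different (and cleaner) route than the paper. The paper does not invoke Lemma \ref{p-prop}(2) at all: for $\alpha\in(0,1)$ it expands $\mathcal{L}^{\alpha,\delta}p(z)$ as a L\'evy-type integral, applies the H\"older-type density estimate of Lemma \ref{p-prop}(3) to dominate $|p(z+w)-p(z)|$ by $(\lvert w\rvert\wedge 1)(p(z+w)+p(z))$, and then integrates in $z$ first (using that $p$ is a probability density), finishing with $\int_{\R}\frac{\lvert w\rvert\wedge 1}{\lvert w\rvert^{1+\alpha}}\,dw<\infty$; for $\alpha=1$, $\delta=0$ it uses the second-derivative bound of Lemma \ref{p-prop}(4) to control the compensated increment $p(z+w)-p(z)-p'(z)w$ for $\lvert w\rvert\le 1$, again integrating in $z$ before $w$. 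Your argument instead integrates the pointwise decay bound of Lemma \ref{p-prop}(2) at $t=1$ directly in $z$, which is shorter and makes the $L^1$ bound an immediate corollary of an already-established pointwise estimate rather than a fresh Fubini-type computation. Both are valid; yours economizes on work because it reuses part (2) of the lemma, whereas the paper's approach is self-contained at the level of the density estimates (3) and (4) and in particular does not require the integrand of the L\'evy operator to be controlled pointwise before integrating. Your remark about well-definedness and about the constant in Lemma \ref{p-prop}(2) being $\delta$-free for $\alpha\in(0,1)$ is reasonable and consistent with the way the paper states that lemma.
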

\noindent\textbf{Proof:}
If $\alpha\in(0,1)$, then by Lemma \ref{p-prop} (3),
\begin{align*}
&\int_{\R}\left|\mathcal{L}^{\alpha,\delta}p(z)\right|\,dz\le d_{\alpha}\int_{\R}\int_{\R}\frac{|p(z+w)-p(z)|}{|w|^{1+\alpha}}\,dw\,dz\\
   &\le C_{\alpha}\int_{\R}\frac{|w|\wedge1}{|w|^{1+\alpha}}\,dw\int_{\R}p(z+w)+p(z)\,dz\leq C_{\alpha}.
\end{align*}
If $\alpha=1$ and $\delta=0$, then by Lemma \ref{p-prop} (4), we get that, for any $|u|\le 1$,
$$|p''(z+u)|\le \frac{C}{(1+|z+u|)^{4}}\le \frac{C}{(1+|z|)^{4}},$$
where in the last inequality, we use the fact that $2(1+|z+u|)\ge  2+|z|-|u|\ge1+|z|$.
It follows that, for any $|w|\le 1$,
$$|p(z+w)-p(z)-p'(z)w|\le \frac{C}{(1+|z|)^{4}}w^2.$$
Thus, we have that
\begin{align*}
 \int_{\R}\left|\mathcal{L}^{1,0}p(z)\right|\,dz\le& \frac{d_{1}}{2}\int_{\R}\,dz\int_{|w|>1}\frac{p(z+w)+p(z)}{w^{2}}\,dw\\
 &+\frac{d_{1}}{2}\int_{\R}\,dz\int_{|w|\le 1}\frac{|p(z+w)-p(z)-p'(z)w|}{w^2}\,dw\\
 \le& 2d_{1}+\frac{d_{1}}{2}\int_{\R}\,dz\int_{|w|\le 1}\frac{C}{(1+|z|)^{4}}\,dw\leq C.
\end{align*}
\hfill$\Box$

\begin{lemma}\label{lip:T}
Let $\alpha\in(0,1]$ and $h\in\mathcal H_\be$ with $\beta\in(0,\alpha)$.
\begin{itemize}
  \item [(1)]If $\alpha\in(0,1)$, then for any $\gamma\in(0,1)$,
\begin{align*}
|\mathcal{L}^{\alpha,\delta}f(x)-\mathcal{L}^{\alpha,\delta}f(y)|\le C_{\alpha,\beta,\gamma}|x-y|^{\gamma}.
\end{align*}
  \item [(2)]If $\alpha=1$ and $\delta=0$, then for any $|x-y|\le 1$,
\begin{align*}
|\mathcal{L}^{1,0}f(x)-\mathcal{L}^{1,0}f(y)|\le C_{\beta}|x-y|\left(1-\log|x-y|\right).
\end{align*}
\end{itemize}
\end{lemma}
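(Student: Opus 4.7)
The approach is to use the L\'evy integral representation
\[
\mathcal{L}^{\alpha,\delta}f(x)-\mathcal{L}^{\alpha,\delta}f(y) = \int_{\R}\Big[(f(x+w)-f(y+w)) - (f(x)-f(y))\Big]\,\nu_{\alpha,\delta}(dw)
\]
for $\alpha\in(0,1)$ and the analogous formula carrying the compensator $-w(f'(x)-f'(y))\mathbf{1}_{|w|\le 1}$ when $\alpha=1$. Without loss of generality $|x-y|\le 1$, since for $|x-y|\ge 1$ the desired bound is immediate from the uniform estimate $\|\mathcal{L}^{\alpha,\delta}f\|_\infty\le c_{\alpha,\beta}$ already proved in Lemma \ref{lem:ph}.

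For $\alpha\in(0,1)$, I would split the integration into three regions. On $\{|w|\le|x-y|\}$, the identity $(f(x+w)-f(x))-(f(y+w)-f(y)) = \int_0^w[f'(x+s)-f'(y+s)]\,ds$ combined with the $\alpha$-H\"older estimate of $f'$ from Lemma \ref{lem:f'}(2) gives integrand $\le c_\alpha|w|\cdot|x-y|^\alpha$, contributing $c_\alpha|x-y|$ after integration against $|w|^{-1-\alpha}$. On $\{|x-y|<|w|\le 1\}$, the dual identity $(f(x+w)-f(y+w))-(f(x)-f(y)) = \int_y^x[f'(u+w)-f'(u)]\,du$ and the same H\"older estimate give integrand $\le c_\alpha|x-y|\cdot|w|^\alpha$, which integrates to $c_\alpha|x-y|\log(1/|x-y|)$. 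Finally on $\{|w|>1\}$, a double application of the modulus of continuity of $f$ (Lemma \ref{lem:ph}) bounds the integrand by $2c_{\alpha,\beta}|x-y|$, contributing $c_{\alpha,\beta}|x-y|$. Summing these yields $|\mathcal{L}^{\alpha,\delta}f(x)-\mathcal{L}^{\alpha,\delta}f(y)|\le c_{\alpha,\beta}|x-y|(1+\log(1/|x-y|))$ on $|x-y|\le 1$, which implies $\gamma$-H\"older continuity for any $\gamma\in(0,1)$ (with a constant depending on $\gamma$) since $|x-y|^{1-\gamma}(1+\log(1/|x-y|))$ is bounded on $(0,1]$.

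The case $\alpha=1$ proceeds analogously, except the compensator $w(f'(x)-f'(y))\mathbf{1}_{|w|\le 1}$ must be retained throughout. Setting $R(u,w):=f(u+w)-f(u)-wf'(u)$, the integrand on $\{|w|\le 1\}$ equals $R(x,w)-R(y,w)$, and the representation $R(u,w)=\int_0^w[f'(u+s)-f'(u)]\,ds$ combined with the logarithmic modulus of $f'$ from Proposition \ref{P:fc}(i), $|f'(a)-f'(b)|\le c(2-\log|a-b|)|a-b|$ for $|a-b|<1$, produces a bound on $R(x,w)-R(y,w)$ which, after splitting the inner $s$-integral at $|s|=|x-y|$, integrates against $|w|^{-2}dw$ to yield exactly $c_\beta|x-y|(1-\log|x-y|)$. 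On $\{|w|>1\}$ the compensator vanishes and the Lemma \ref{lem:ph} modulus of $f$ provides the needed Lipschitz estimate.

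The main obstacle is the $\alpha=1$ case: obtaining the sharp logarithmic factor $(1-\log|x-y|)$ rather than a $(\log|x-y|)^2$ factor requires carefully exploiting the cancellation between the small-$|s|$ bound $(2-\log|s|)|s|$ and the large-$|s|$ bound $(2-\log|x-y|)|x-y|$ inside the inner integral defining $R(x,w)-R(y,w)$, and then summing the resulting contributions across the three $w$-regions without losing factors. A naive triangle inequality at any step degrades the final log exponent.
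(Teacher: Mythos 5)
Your argument for part (1), $\alpha\in(0,1)$, is correct and follows a genuinely different route from the paper. The paper uses the semigroup representation
\[
\mathcal{L}^{\alpha,\delta}f(x)=-\int_0^\infty s(t)^{-1}e^{-t}\int_{\R}(\mathcal{L}^{\alpha,\delta}p)(z)\,\widetilde h\big(s(t)^{1/\alpha}z+e^{-t/\alpha}x\big)\,dz\,dt,
\]
bounding the inner integral's $x$-increment by the minimum of a Lipschitz estimate (Lemma \ref{lem:tria-p}) and a $t$-local estimate (Lemma \ref{lem3}), then interpolating in $t$. Your approach instead applies the triangle inequality directly in the L\'evy integral, splitting the $w$-variable at $|x-y|$ and at $1$. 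Both are correct; your route is more elementary and actually yields the stronger conclusion $|x-y|(1+\log(1/|x-y|))$ for $|x-y|\le 1$, from which the stated $\gamma$-H\"older bound follows trivially.

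For part (2), $\alpha=1$, there is a genuine gap. Your three-region decomposition gives, on the middle region $|x-y|<|w|\le 1$, the integrand bound $|R(x,w)-R(y,w)|\le C\rho|w|(2-\log\rho)$ with $\rho=|x-y|$, and integrating this against $|w|^{-2}\,dw$ produces $C\rho(2-\log\rho)\log(1/\rho)$, i.e.\ a $(\log\rho)^2$ factor. You acknowledge this obstacle, but your claim that ``carefully exploiting cancellation'' between the small-$|s|$ and large-$|s|$ bounds on $|(f'(x+s)-f'(x))-(f'(y+s)-f'(y))|$ yields $\rho(1-\log\rho)$ is not substantiated: the minimum of the two bounds, integrated in $s$ and then in $w$, still gives $\rho(\log\rho)^2$ after the crossover at $|s|\approx\rho$. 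The root of the difficulty is that the modulus of $f'$ already carries a logarithm, so any argument factored through $f'$ alone pays a double log. The paper avoids this by bounding the inner $z$-integral directly in terms of the $\beta$-H\"older modulus of $h$ (Lemma \ref{lem3} gives the scaling $a^\beta+a$ with no log), so that the $t$-interpolation at the threshold $B=|x-y|^{1/\beta}$ produces a single logarithm. Without some analogue of this idea your decomposition cannot reach the stated bound.
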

\noindent\textbf{Proof:} (1) Set $s(t)=1-e^{-t}$ and  $\widetilde h = h - \E[h(Z)]$. We claim that
\begin{eqnarray}\label{1.2}
  &&\mathcal{L}^{\alpha,\delta}f(x)=-\int_0^\infty \int_{\R} \mathcal{L}^{\alpha,\delta}q(t,\cdot,y)(x)\widetilde h(y)\,dy\,dt\nonumber\\
  &=&-\int_0^\infty s(t)^{-1}e^{-t}\,dt\int_{\R}(\mathcal{L}^{\alpha,\delta}p)(z)\widetilde h\left(s(t)^{1/\alpha}z+e^{-t/\alpha}x\right)\,dz.
\end{eqnarray}
The second equality follows from \eqref{density}. To see that the first one holds,  note that $h\in \mathcal H_\be$ so that Fubini's theorem implies
\begin{align*}
\mathcal L^{\al,\de} f(x) = -\int_0^\infty \mathcal L^{\al,\de} \left(\int q(t,\cdot,y) \widetilde h(y) dy\right) (x) dt.
\end{align*}
For each fixed $t>0$, applying Lemma \ref{p-prop} (3) justifies a further use of Fubini's theorem, we are led to
\begin{align*}
\mathcal L^{\al,\de} \left(\int q(t,\cdot,y) \widetilde h(y) dy\right) (x) = \int \mathcal L^{\al,\de} q(t,\cdot,y)(x)\widetilde h(y) dy
\end{align*}
and the claim follows.
 By Lemma \ref{lem:tria-p}, we get that
\begin{align*}
  &\quad\left|\int_{\R}(\mathcal{L}^{\alpha,\delta}p)(z)(\widetilde h(s(t)^{1/\alpha}z+e^{-t/\alpha}x)-\widetilde h(s(t)^{1/\alpha}z+e^{-t/\alpha}y))\,dz\right|\\
&\le e^{-t/\alpha}|x-y|\int_{\R}\left|(\mathcal{L}^{\alpha,\delta}p)(z)\right|\,dz
   \le C_{\alpha}|x-y|.
\end{align*}
By Lemma \ref{lem3}  applied to $\widetilde h(\cdot+e^{-t/\al}x), \widetilde h(\cdot+e^{-t/\al}y)\in \mathcal H_\be$, we get that,
\begin{align*}
\left|\int_{\R}(\mathcal{L}^{\alpha,\delta}p)(z)\widetilde h\left((s(t)^{1/\alpha}z+e^{-t/\alpha}x\right)\!dz\!-\!\!\int_{\R}(\mathcal{L}^{\alpha,\delta}p)(z)\widetilde h\left((s(t)^{1/\alpha}z+e^{-t/\alpha}y\right)\!dz\right|\le C_{\alpha,\beta}s(t).
\end{align*}
Thus, we get that, for any $\gamma\in[0,1]$,
\begin{align}\label{1.1}
  &\left|\int_{\R}(\mathcal{L}^{\alpha,\delta}p)(z)\widetilde h\left(s(t)^{1/\alpha}z+e^{-t/\alpha}x\right)\,dz-\int_{\R}(\mathcal{L}^{\alpha,\delta}p)(z)\widetilde h\left(s(t)^{1/\alpha}z+e^{-t/\alpha}y\right)\,dz\right|\nonumber\\
  &\le C_{\alpha,\beta}\big(s(t)\wedge|x-y|\big)\le C_{\alpha,\beta}s(t)(1\wedge s(t)^{-1}|x-y|)^{\gamma}\leq C_{\alpha,\beta,\gamma}s(t)^{1-\gamma}|x-y|^{\gamma}.
\end{align}
Then, by \eqref{1.2} and \eqref{1.1}, we get that
\begin{align*}
 \quad|\mathcal{L}^{\alpha,\delta}f(x)-\mathcal{L}^{\alpha,\delta}f(y)|&\le C_{\alpha,\beta,\gamma}\int_0^\infty s(t)^{-1}s(t)^{1-\gamma}e^{-t}\,dt |x-y|^{\gamma}\\
  &\le C_{\alpha,\beta,\gamma}\int_0^\infty s(t)^{-\gamma}e^{-t}\,dt |x-y|^{\gamma}\\
  &\le C_{\alpha,\beta,\gamma}\int_0^\infty t^{-\gamma}e^{-(1-\gamma)t}\,dt |x-y|^{\gamma}\leq C_{\alpha,\beta,\gamma}|x-y|^{\gamma}.
\end{align*}
(2) Case $\alpha=1$ and $\delta=0$. In the following, we assume that $|x-y|\le 1$.\\
By Lemma \ref{lem:tria-p}, we get that
\begin{align*}
  &\left|\int_{\R}(\mathcal{L}^{1,0}p)(z)(h(s(t)^{1/\alpha}z+e^{-t/\alpha}x)-h(s(t)^{1/\alpha}z+e^{-t/\alpha}y))\,dz\right|\\
&\le e^{-t}|x-y|\int_{\R}\left|(\mathcal{L}^{1,0}p)(z)\right|\,dz.
\end{align*}
By Lemma \ref{lem3}, we get that, for $t<1$,
\begin{align}\label{eq:p2}
 &\quad\left|\int_{\R}(\mathcal{L}^{1,0}p)(z)h\left(s(t)^{1/\alpha}z+e^{-t/\alpha}x\right)\,dz-\int_{\R}(\mathcal{L}^{1,0}p)(z)h\left(s(t)^{1/\alpha}z+e^{-t/\alpha}y\right)\,dz\right|\nonumber\\
&\le C_{\beta}(s(t)+s(t)^{\beta})\le C_{\beta}s(t)^{\beta}.
\end{align}
Let $B=|x-y|^{1/\beta}$. Then by \eqref{1.2}, we get that
\begin{align*}
|\mathcal{L}^{1,0}f(x)-\mathcal{L}^{1,0}f(y)|&\le C_{\beta}\Big(\int_0^B s(t)^{-1}s(t)^{\beta}e^{-t}\,dt+\int_B^\infty s(t)^{-1}e^{-2t}\,dt |x-y|\Big)\\
&\le C_{\beta}\Big(B^{\beta}+\int_B^\infty t^{-1}e^{-t}\,dt |x-y|\Big)\\
&=C_{\beta}\Big(B^{\beta}+\left(\int_B^1 t^{-1}\,dt+\int_1^\infty e^{-t}\,dt\right) |x-y|\Big)\\
&\leq C_{\beta}|x-y|\left(1-\log|x-y|\right),
\end{align*}
ending the proof.
\hfill $\Box$

\section{Proof of Theorem \ref{thm2}}\label{s:proofs}

\subsection{Alternate expressions for $\mathcal L^{\al,\de}$}

The following proposition gathers useful alternate expressions for the operator $\mathcal{L}^{\alpha,\delta}$. 

\begin{prop}\label{properties}
Let $\alpha\in(0,1]$ and
 $f\in C^1(\R)$.
We have, for all $x\in\R$ and $a>0$,

\noindent
a.) When $\alpha\in(0,1),$
\begin{eqnarray*}
(\mathcal{L}^{\alpha,\delta}f)(x)&=&\frac{d_\alpha}{\alpha}\int_0^\infty
\frac{(1+\delta)f'(x+u)-(1-\delta)f'(x-u)}{2u^\alpha}du\notag\\
&=& \frac{a^{1-\alpha}}{\alpha}\int_\R uf'(x+au) \nu_{\al,\de}(du)
\end{eqnarray*}
provided that $\int_{\R}\frac{|f(x+t)-f(x)|}{|t|^{1+\alpha}}\dif t<\infty$ and $\int_{\R}\frac{|f'(x+t)|}{|t|^\alpha}\dif t<\infty$.

\noindent
b.) When $\alpha=1,$ $\delta=0$,
\begin{align*}
\quad(\mathcal{L}^{1,0}f)(x)&=d_{1}\int_{0}^\infty\frac{(f'(x+t)-f'(x){\bf 1}_{(0,1)}(t))-(f'(x-t)-f'(x){\bf 1}_{(0,1)}(t))}{2t}dt\\
&=\int_{-\infty}^\infty t(f'(x+t)-f'(x){\bf 1}_{(-1,1)}(t)) \nu_{1,0}(dt)
\end{align*}
provided that $\int_{\R}\frac{|f(x+t)-f(x)-f'(x){\bf 1}_{(-1,1)}(t))|}{|t|^2}\dif t<\infty$ and $\int_{\R}\frac{|f'(x+t)-f'(x){\bf 1}_{(-1,1)}(t))|}{|t|}\dif t<\infty$.
\end{prop}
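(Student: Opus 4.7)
My plan is to verify both identities by starting from the definition of $\mathcal{L}^{\alpha,\delta}$ given in \eqref{op}, symmetrizing, and then integrating by parts. The outline is as follows.

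For part a), I will first rewrite the defining integral, splitting into positive and negative half-lines and substituting $u\mapsto -u$ in the negative half, to get
\[
\mathcal L^{\al,\de}f(x)=d_\al\int_0^\infty\!\frac{1+\de}{2}\,\frac{f(x+u)-f(x)}{u^{1+\al}}\,du+d_\al\int_0^\infty\!\frac{1-\de}{2}\,\frac{f(x-u)-f(x)}{u^{1+\al}}\,du.
\]
Integration by parts on each piece, taking antiderivative $-\tfrac{1}{\al}u^{-\al}$ of $u^{-1-\al}$, moves the derivative onto $f$. The boundary contributions vanish: at $u=0$, $f(x\pm u)-f(x)=O(u)$ gives $u^{-\al}(f(x\pm u)-f(x))=O(u^{1-\al})\to 0$ since $\al<1$, and the assumed integrability of $|f(x+t)-f(x)|/|t|^{1+\al}$ controls the behavior at infinity via a routine argument. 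This yields the first equality. For the second equality, I simply substitute $v=au$ in the integral $\int_\R uf'(x+au)\nu_{\al,\de}(du)$: each half-line integral picks up a factor $a^{\al-1}$, and the two halves combine back into the symmetrized form just obtained, up to the multiplicative constant $\al^{-1}$.

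For part b), the strategy is the same but the truncation at $|u|\le 1$ requires care. I will split the defining integral at $|u|=1$, giving
\[
\mathcal L^{1,0}f(x)=\frac{d_1}{2}\int_0^1\!\frac{f(x+u)-f(x)-uf'(x)}{u^2}du+\frac{d_1}{2}\int_1^\infty\!\frac{f(x+u)-f(x)}{u^2}du
\]
plus the symmetric contribution on $(-\infty,0)$ (using $u\mapsto -u$). On the inner piece, integration by parts with antiderivative $-1/u$ produces the boundary value $-(f(x+1)-f(x)-f'(x))$ at $u=1$ and vanishing boundary at $u=0$ (since $f(x+u)-f(x)-uf'(x)=O(u^2)$). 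On the outer piece, integration by parts produces the boundary value $+(f(x+1)-f(x))$ at $u=1$ and vanishing boundary at $\infty$ (from the integrability hypothesis). Adding, the $f(x+1)-f(x)$ terms cancel and what survives is $f'(x)+\int_0^1 \frac{f'(x+u)-f'(x)}{u}du+\int_1^\infty\frac{f'(x+u)}{u}du$. The mirror computation for the $(-\infty,0)$ part yields $-f'(x)$ plus the corresponding pieces involving $f'(x-u)$, and the $\pm f'(x)$ boundary terms cancel. Combining and reintroducing the indicator $\mathbf 1_{(0,1)}$ at the appropriate places yields exactly the first displayed formula; the second formula then follows by undoing the symmetrization $u\mapsto -u$ just as in part a).

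The main obstacle is bookkeeping at the cut $|u|=1$ in part b): the truncation creates what looks like a jump, but in fact the inner and outer boundary contributions at $u=1$ cancel, and similarly the constant $f'(x)$ contributions from the two half-lines cancel by symmetry of the measure $\nu_{1,0}$. I will also need to verify that all integration-by-parts manipulations are justified under the stated integrability hypotheses on $f$ and $f'$, which is standard but must be done carefully to control the behavior at $0$ and $\infty$.
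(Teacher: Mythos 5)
Your argument is correct in its conclusions, and the algebra you sketch for both parts checks out (in particular, the cancellations at $u=1$ and between the $\pm f'(x)$ boundary terms in part~b really do occur, and the end result matches the claimed formulas). However, the route differs from the paper's: you integrate by parts on each half-line, whereas the paper writes $f(x+u)-f(x)=\int_0^u f'(x+t)\,dt$ (with the appropriate truncation adjustments for $\alpha=1$), swaps the order of integration by Fubini, and evaluates the inner $u$-integral in closed form. The two derivations are of course close cousins, but the Fubini approach is cleaner under exactly the stated hypotheses: the double integral is absolutely convergent (one checks $\int_0^\infty |f'(x+t)|\,t^{-\alpha}\,dt<\infty$, respectively $\int_0^\infty |f'(x+t)-f'(x)\mathbf 1_{(0,1)}(t)|\,t^{-1}\,dt<\infty$, which are precisely the second integrability assumptions), so no boundary terms arise and nothing further needs to be proved.

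Your integration-by-parts approach, by contrast, introduces boundary terms at $0$, at $1$ (for $\alpha=1$), and at $\infty$, and while the first two vanish or cancel by the estimates you give, the vanishing at $\infty$ is \emph{not} a direct consequence of the stated integrability hypothesis: integrability of $|f(x+t)-f(x)|/|t|^{1+\alpha}$ on $[1,\infty)$ does not imply that $t^{-\alpha}(f(x+t)-f(x))\to 0$ pointwise (an $L^1$ function need not go to $0$). What you can do is pick a sequence $u_n\to\infty$ along which the boundary term tends to $0$, integrate by parts on $[0,u_n]$ (or $[1,u_n]$), and pass to the limit, using dominated convergence on the surviving integral. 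You flagged this as a "routine argument," which is fair, but it is a genuine extra step that the Fubini proof avoids entirely. So: correct, slightly more work, and a legitimate alternative; if you want a proof that is tight to the assumptions with no loose ends, the Fubini route is preferable.
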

\noindent
{\it Proof}.
Note that the conditions on $f$ ensure that all the integrals are well defined and we can use Fubini's theorem in the following proof.
We first consider $\alpha\in(0,1).$\\
1. One can write
\begin{eqnarray}\label{1op}
&&\frac1{d_\alpha}(\mathcal{L}^{\alpha,\delta}f)(x)\nonumber\\
&=&(1+\delta)\int_0^\infty \frac{du}{2u^{1+\alpha}}\int_0^udt\,f'(x+t)
-(1-\delta)\int_{-\infty}^0  \frac{du}{(-u)^{1+\alpha}}\int_u^0 dt\,f'(x+t)\nonumber\\
&=&(1+\delta)
\int_0^\infty dt\,f'(x+t)\int_t^\infty \frac{du}{2u^{1+\alpha}}
-(1-\delta)
\int_{-\infty}^0 dt\,f'(x+t)\int_{-\infty}^t \frac{du}{2(-u)^{1+\alpha}}\nonumber\\
&=&\frac{1}{\alpha}\int_0^\infty
\frac{(1+\delta)f'(x+t)-(1-\delta)f'(x-t)}{2t^\alpha}dt.
\end{eqnarray}
2. One can write
\begin{eqnarray}\label{2op}
\frac1{d_\alpha}(\mathcal{L}^{\alpha,\delta})(x)
&=&\frac{1+\delta}{\alpha}
\int_0^\infty f'(x+t)\frac{dt}{2t^\alpha}
-\frac{1-\delta}{\alpha}
\int_{-\infty}^0 f'(x+t)\frac{dt}{2|t|^\alpha}\nonumber\\
&=&\frac{1}{\alpha}\int_\R tf'(x+t)\frac{(1+\delta){\bf 1}_{(0,\infty)}(t)+(1-\delta){\bf 1}_{(-\infty,0)}(t)}
{2|t|^{\alpha+1}}dt\nonumber\\
&=&\frac{a^{1-\alpha}}{\alpha}\int_\R uf'(x+au)\frac{(1+\delta){\bf 1}_{(0,\infty)}(u)+(1-\delta){\bf 1}_{(-\infty,0)}(u)}
{2|u|^{\alpha+1}}du.
\end{eqnarray}

Now we deal with $\alpha=1$ and $\delta=0$.
We have that
\begin{align}
\quad\frac{1}{d_1}(\mathcal{L}^{1,0}f)(x)&=\int_0^\infty \frac{du}{2u^{2}}\int_0^u\,(f'(x+t)-f'(x){\bf 1}_{(0,1)}(u))\dif t\nonumber\\
&-\int_{-\infty}^0  \frac{du}{2(-u)^{2}}\int_u^0 \,(f'(x+t)-f'(x){\bf 1}_{(-1,0)}(u))\dif t)\nonumber\\
&=\int_0^\infty \frac{f'(x+t)-f'(x){\bf 1}_{(0,1)}(t)}{2t}\dif t
-\int_{-\infty}^0 \frac{f'(x+t)-f'(x){\bf 1}_{(-1,0)}(t)}{2|t|}\dif t\label{3op}\\
&=\int_{-\infty}^\infty
\frac{t(f'(x+t)-f'(x){\bf 1}_{(-1,1)}(t))}{2|t|^2}dt\label{4op},
\end{align}
combining (\ref{3op}) and (\ref{4op}), we immediately obtain the results in the case $\alpha=1.$
\qed
\medskip

\subsection{Taylor-like expansion}
In order to prove Theorem \ref{thm2}, we shall make use of the following lemmas. Recall the definition of $k_\de$ in the proof of Lemma \ref{lem3}.

$\bullet$ \underline{{\bf $\alpha\in(0,1)$}} {\bf :} For any $\delta\in[-1,1],$ we have
\begin{align*}
\int_{\mathbb{R}}\frac{y{\bf 1}_{(-1,1)}(y)}{2|y|^{1+\alpha}} k_\de(y)
 dy=\frac{\delta}{1-\alpha},
\end{align*}
which follows that
for any $a>0,$ we have
\begin{align}\label{ashift}
&\frac{1}{d_{\alpha}}\mathcal{L}^{\alpha,\delta}f(x)-\frac{\delta f'(x)}{\alpha(1-\alpha)}=\frac{1}{\alpha}\int_\R \big(uf'(x+u)-u{\bf 1}_{(-1,1)}(u)f'(x)\big)\frac{k_\de(u)}
{2|u|^{1+\alpha}}du\nonumber\\
=&a^{1-\alpha}\frac{1}{\alpha}\int_\R \big(uf'(x+au)-u{\bf 1}_{(-1,1)}(au)f'(x)\big)\frac{k_\de(u)}
{2|u|^{1+\alpha}}du.
\end{align}
According to (\ref{ashift}), we have the following Taylor-like expansion.
\begin{lem}\label{ml2}
Consider $\alpha\in(0,1).$ Let $X$ have a distribution $F_{X}$ with the form (\ref{dis}),
and $\tilde{X}$ have a distribution $F_{\tilde{X}}$ defined in \eqref{Fun}. $Y$ is a random variable, which is independent with $X$ and $\tilde{X}$.
For any $0<a\leq(2A)^{-\frac{1}{\alpha}}\wedge L\wedge1$ and $f$ defined as above, denote
\begin{align*}
T_{1}\!:=\!\Big|\mathbb{E}\big[Xf'(Y+aX)\big]\!-\!\mathbb{E}[X{\bf 1}_{(-1,1)}(aX)]\mathbb{E}[f'(Y)]-\frac{2A\alpha^{2}}{d_{\alpha}}a^{\alpha-1}\mathbb{E}[\mathcal{L}^{\alpha,\delta}f(Y)-\frac{\delta d_{\alpha}f'(Y)}{\alpha(1-\alpha)}]\Big|,
\end{align*}
then
\begin{align*}
T_{1}\leq C_{\alpha,\beta,A}\Big[a^{\alpha}+a^{\alpha}\int_{-a^{-1}}^{a^{-1}}|x|^{1+\alpha}\big|d\frac{\epsilon(x)}{|x|^{\alpha}}\big|
+a^{\beta-1}
\int_{|x|\geq a^{-1}}|x|^{\beta}\big(\big|d\frac{x\epsilon'(x)}{|x|^{\alpha}}\big|+\big|d\frac{\epsilon(x)}{|x|^{\alpha}}\big|\big)\Big].
\end{align*}
\end{lem}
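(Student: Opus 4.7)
Multiplying identity \eqref{ashift} by $\frac{2A\alpha^2}{d_\alpha}a^{\alpha-1}$ shows that the ``target'' term appearing in $T_1$ equals
\[
\mathbb{E}_Y\Big[\int_{\R} G_Y(u)\,\widetilde\nu(du)\Big],\qquad G_y(u):=uf'(y+au)-u\mathbf{1}_{(-1,1)}(au)f'(y),
\]
where $\widetilde\nu(du):=\frac{A\alpha\,k_\delta(u)}{|u|^{1+\alpha}}du$ and $k_\delta$ is the function used in the proof of Lemma \ref{lem3}. On the other hand, independence of $X$ and $Y$ together with Fubini yield $\mathbb{E}[Xf'(Y+aX)]-\mathbb{E}[X\mathbf{1}_{(-1,1)}(aX)]\mathbb{E}[f'(Y)]=\mathbb{E}_Y[\int G_Y(u)\,dF_X(u)]$, so
\[
T_1=\Big|\mathbb{E}_Y\!\int_{\R}G_Y(u)\big(dF_X(u)-\widetilde\nu(du)\big)\Big|,
\]
reducing the task to bounding the inner integral uniformly in $y\in\R$. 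Differentiating \eqref{dis} gives the explicit formula $(dF_X-\widetilde\nu)(u)=-\mathrm{sign}(u)k_\delta(u)\,d(\epsilon(u)/|u|^\alpha)$ on $|u|>1$, equivalently a density $k_\delta(u)(\alpha\epsilon(u)-u\epsilon'(u))/|u|^{\alpha+1}$. The plan is to split the integral at $|u|=a^{-1}$.

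\textbf{Small-$u$ region.} On $|u|<a^{-1}$ the indicator equals $1$, so $G_y(u)=u(f'(y+au)-f'(y))$ and the $\alpha$-H\"older bound \eqref{e:HolF'} gives $|G_y(u)|\le C_\alpha a^\alpha|u|^{1+\alpha}$. Over $\{|u|\le 1\}$ the weight $|u|^{1+\alpha}$ tames both $dF_X$ and $\widetilde\nu$ near the origin, producing a contribution $\le C_{A,\alpha}a^\alpha$. Over $\{1<|u|<a^{-1}\}$, substituting the explicit form of $dF_X-\widetilde\nu$ yields the bound $Ca^\alpha\int_{1<|u|<a^{-1}}|u|^{1+\alpha}|d(\epsilon(u)/|u|^\alpha)|$. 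These two pieces together match the first two terms of the claimed upper bound.

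\textbf{Tail region and main difficulty.} On $|u|\ge a^{-1}$ the indicator vanishes, so $G_y(u)=uf'(y+au)$. Splitting the density of $dF_X-\widetilde\nu$ into its $\epsilon$- and $\epsilon'$-parts, the tail contribution from $u>0$ reduces to
\[
\alpha(1+\delta)\!\int_{a^{-1}}^\infty\!\frac{\epsilon(u)f'(y+au)}{u^\alpha}\,du-(1+\delta)\!\int_{a^{-1}}^\infty\!\frac{u\epsilon'(u)f'(y+au)}{u^\alpha}\,du,
\]
with an analogous expression for $u<0$. A direct integration by parts now meets two obstructions: the boundary at $u=\pm\infty$ need not vanish (no decay rate on $\epsilon$ is prescribed beyond $\epsilon\to 0$ and $x\epsilon'(x)\to 0$), and any antiderivative of $f'$ introduces additive $y$-dependent constants that the $y$-free target bound cannot absorb. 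Both obstructions are neutralized by a single device: set $g(u):=f(y+au)-f(y+\mathrm{sign}(u))$, which satisfies $g(\pm a^{-1})=0$ and, by \eqref{e:Holf}, $|g(u)|\le C_{\alpha,\beta}a^\beta|u|^\beta$ for $|au|\ge 1$, uniformly in $y$. Writing $f'(y+au)=g'(u)/a$ and integrating by parts in each of the two integrals, the boundary at $u=\pm a^{-1}$ vanishes by construction, and the boundary at $u=\pm\infty$ vanishes because $\epsilon(u)/|u|^\alpha\to 0$ and $u\epsilon'(u)/|u|^\alpha\to 0$ beat $|g(u)|\le Ca^\beta|u|^\beta$ thanks to $\beta<\alpha$. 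Each remaining integral is then bounded by $Ca^{\beta-1}\int_{|u|\ge a^{-1}}|u|^\beta|d(\cdot)|$ with $(\cdot)\in\{\epsilon(u)/|u|^\alpha,\ u\epsilon'(u)/|u|^\alpha\}$, delivering the final error term. Combining the three regions, and noting that every estimate is pointwise in $y$, the outer $\mathbb{E}_Y$ is a harmless pass-through and the stated bound follows.
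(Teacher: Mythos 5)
Your proposal is correct and follows essentially the same route as the paper's proof. You start from the same identity \eqref{ashift}, reduce to bounding $\int G_Y(u)\,d(F_X-\widetilde\nu)(u)$ (the paper packages $\widetilde\nu$ as the law of an auxiliary random variable $\tilde X$ plus a remainder $\mathcal R$, but this is only cosmetic), split at $|u|=a^{-1}$, use the $\alpha$-H\"older bound \eqref{e:HolF'} on the inner region exactly as the paper does for $\mathcal I$ and $\mathcal R$, and on the tail region your integration by parts with $g(u)=f(y+au)-f(y+\mathrm{sign}(u))$ is precisely the paper's Fubini-plus-antiderivative step that produces the increment $f(Y+at)-f(Y+1)$ and invokes \eqref{e:Holf} — the boundary-at-$a^{-1}$ cancellation and the vanishing at infinity via $\beta<\alpha$, $\epsilon\to 0$, $u\epsilon'(u)\to 0$ are the same ingredients.
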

\begin{proof}
We have by (\ref{ashift})
\begin{align*}
&\frac{2A\alpha^{2}}{d_{\alpha}}a^{\alpha-1}
\mathbb{E}\left[\mathcal{L}^{\alpha,\delta}f(Y)-\frac{\delta d_{\alpha}f'(Y)}{\alpha(1-\alpha)}\right]\\
=&2A\alpha\mathbb{E}\Big[\int_{-\infty}^{\infty}\big[uf'(Y+au)-u{\bf 1}_{(-1,1)}(au)f'(Y)\big]\frac{k_\de(u)}
{2|u|^{\alpha+1}}du\Big]\\
=&\mathbb{E}\Big[\int_{|u|\geq(2A)^{\frac{1}{\alpha}}}\big[uf'(Y+au)-u{\bf 1}_{(-1,1)}(au)f'(Y)\big]\frac{A\alpha{k}_{\delta}(u)}
{|u|^{\alpha+1}}du\Big]+\mathcal{R},
\end{align*}
where
\begin{align}\label{R}
\mathcal{R}=\mathbb{E}\Big[\int_{-(2A)^{\frac{1}{\alpha}}}^{(2A)^{\frac{1}{\alpha}}}\big[uf'(Y+au)-u{\bf 1}_{(-1,1)}(au)f'(Y)\big]\frac{A\alpha{k}_{\delta}(u)}
{|u|^{\alpha+1}}du\Big].
\end{align}
Since $\int_{|u|\geq(2A)^{\frac{1}{\alpha}}}\frac{A\alpha{\bf 1}_{\delta}(u)}
{|u|^{\alpha+1}}du=1,$ we can consider a random variable $\tilde{X}$ which is independent of $Y$ and satisfies
\begin{align}\label{Fun}
\mathbb{P}(\tilde{X}>x)=\frac{A(1+\delta)}{|x|^{\alpha}},\quad x\geq(2A)^{\frac{1}{\alpha}},\qquad \mathbb{P}(\tilde{X}\leq x)=\frac{A(1-\delta)}{|x|^{\alpha}},\quad x\leq-(2A)^{\frac{1}{\alpha}},
\end{align}
it follows that
\begin{align*}
\frac{2A\alpha^{2}}{d_{\alpha}}a^{\alpha-1}\mathbb{E}\left[\mathcal{L}^{\alpha,\delta}f(Y)-\frac{\delta d_{\alpha}f'(Y)}{1-\alpha}\right]=\mathbb{E}\big[\tilde{X}f'(Y+a\tilde{X})\big]-\mathbb{E}[\tilde{X}{\bf 1}_{(-1,1)}(a\tilde{X})f'(Y)\big]
+\mathcal{R}.
\end{align*}
As a result, denoting by $F_{\tilde{X}}$ the distribution function of $\tilde{X}$,
we have
\begin{align}\label{result}
T_{1}\leq&\mathbb{E}\Big|\int_{-\infty}^{\infty}\big[xf'(Y+ax)-x{\bf 1}_{(-1,1)}(ax)f'(Y)\big]
d\big(F_{X}(x)-F_{\tilde{X}}(x)\big)\Big|+|\mathcal{R}|,
\end{align}
and it is easy to verify by (\ref{e:HolF'}),
\begin{align}\label{R1}
|\mathcal{R}|\leq 2A\alpha\int_{-(2A)^{\frac{1}{\alpha}}}^{(2A)^{\frac{1}{\alpha}}}
\frac{\mathbb{E}\big|f'(Y+au)-f'(Y)\big|}
{|u|^{\alpha}}du\leq C_{\alpha,A}a^{\alpha}.
\end{align}
For the first term, we have
\begin{align*}
&\mathbb{E}\Big|\int_{-\infty}^{\infty}\big[xf'(Y+ax)-x{\bf 1}_{(-1,1)}(ax)f'(Y)\big]
d\big(F_{X}(x)-F_{\tilde{X}}(x)\big)\Big|\\
\leq&\mathbb{E}\Big|\Big(\int_{-a^{-1}}^{a^{-1}}+\int_{|x|>a^{-1}}\Big)\big[xf'(Y+ax)-x{\bf 1}_{(-1,1)}(ax)f'(Y)\big]
d\big(F_{X}(x)-F_{\tilde{X}}(x)\big)\Big|:=\mathcal{I}+\mathcal{II}.
\end{align*}
According to (\ref{dis}) and (\ref{Fun}), we immediately obtain
\begin{align}\label{function}
F_{X}(x)-F_{\tilde{X}}(x)=&\big(\frac{1}{2}-\frac{A+\epsilon(x)}{|x|^{\alpha}}\big)(1+\delta){\bf 1}_{(0,(2A)^{\frac{1}{\alpha}})}(x)-\frac{\epsilon(x)}{|x|^{\alpha}}(1+\delta){\bf 1}_{((2A)^{\frac{1}{\alpha}},\infty)}(x)\nonumber\\
&+\big(\frac{A+\epsilon(x)}{|x|^{\alpha}}-\frac{1}{2}\big)(1-\delta){\bf 1}_{(-(2A)^{\frac{1}{\alpha}},0)}(x)+\frac{\epsilon(x)}{|x|^{\alpha}}(1-\delta){\bf 1}_{(-\infty,-(2A)^{\frac{1}{\alpha}})}(x).
\end{align}
On the one hand, we have by (\ref{e:HolF'}) and (\ref{function}),
\begin{align*}
\mathcal{I}&\leq\mathbb{E}\Big[\int_{-a^{-1}}^{a^{-1}}|x||f'(Y+ax)-f'(Y)|
\big|d\big(F_{X}(x)-F_{\tilde{X}}(x)\big)\big|\Big]\\
&\leq C_{\alpha}a^{\alpha}\int_{-a^{-1}}^{a^{-1}}|x|^{1+\alpha}\big|d\big(F_{X}(x)-F_{\tilde{X}}(x)\big)\big|\leq C_{\alpha,A}\big(a^{\alpha}+a^{\alpha}\int_{-a^{-1}}^{a^{-1}}|x|^{1+\alpha}\big|d\frac{\varepsilon(x)}{|x|^{\alpha}}\big|\big).
\end{align*}
On the other hand, noting that $\epsilon$ is $C^2$ and $x\epsilon'(x)\rightarrow0$ as $|x|\rightarrow\infty,$ we have by integration by parts that
\begin{align*}
&\mathbb{E}\Big|\int_{a^{-1}}^{\infty}xf'(Y+ax)d\frac{\epsilon(x)}{x^{\alpha}}\Big|\\
\leq&\mathbb{E}\Big|\int_{a^{-1}}^{\infty}xf'(Y+ax)\frac{x\epsilon'(x)}{x^{\alpha+1}}dx\Big|+\mathbb{E}\Big|\int_{a^{-1}}^{\infty}xf'(Y+ax)\frac{\alpha\epsilon(x)}{x^{\alpha+1}}dx\Big|\\
=&\mathbb{E}\Big|\int_{a^{-1}}^{\infty}f'(Y+ax)dx\int_{x}^{\infty}d\frac{t\epsilon'(t)}{t^{\alpha}}\Big|
+\mathbb{E}\Big|\int_{a^{-1}}^{\infty}f'(Y+ax)dx\int_{x}^{\infty}d\frac{\alpha\epsilon(t)}{t^{\alpha}}\Big|\\
=&\mathbb{E}\Big|\int_{a^{-1}}^{\infty}d\frac{t\epsilon'(t)}{t^{\alpha}}\int_{a^{-1}}^{t}f'(Y+ax)dx\Big|
+\mathbb{E}\Big|\int_{a^{-1}}^{\infty}d\frac{\alpha\epsilon(t)}{t^{\alpha}}\int_{a^{-1}}^{t}f'(Y+ax)dx\Big|\\
=
&a^{-1}\mathbb{E}\Big|\int_{a^{-1}}^{\infty}\big(f(Y+at)-f(Y+1)\big)d\frac{t\epsilon'(t)}{t^{\alpha}}\Big|
+a^{-1}\mathbb{E}\Big|\int_{a^{-1}}^{\infty}\big(f(Y+at)-f(Y+1)\big)d\frac{\alpha\epsilon(t)}{t^{\alpha}}\Big|,
\end{align*}
then, we have by (\ref{e:Holf})
\begin{align*}
\mathbb{E}\Big|\int_{a^{-1}}^{\infty}xf'(Y+ax)d\frac{\epsilon(x)}{x^{\alpha}}\Big|
\leq a^{\beta-1}\int_{a^{-1}}^{\infty}|t|^{\beta}\big|d\frac{t\epsilon'(t)}{t^{\alpha}}\big|+a^{\beta-1}\int_{a^{-1}}^{\infty}|t|^{\beta}\big|d\frac{\alpha\epsilon(t)}{t^{\alpha}}\big|.
\end{align*}
Using the same argument, we get that
\begin{align*}
\mathbb{E}\Big|\int_{-\infty}^{-a^{-1}}xf'(Y+ax)d\frac{\epsilon(x)}{|x|^{\alpha}}\Big|\leq a^{\beta-1}\int^{-a^{-1}}_{-\infty}|t|^{\beta}\big|d\frac{t\epsilon'(t)}{|t|^{\alpha}}\big|+a^{\beta-1}\int^{-a^{-1}}_{-\infty}|t|^{\beta}\big|d\frac{\alpha\epsilon(t)}{|t|^{\alpha}}\big|.
\end{align*}
These imply
\begin{align}\label{1}
\mathcal{II}\leq a^{\beta-1}\int_{|x|\geq a^{-1}}|x|^{\beta}\big|d\frac{x\epsilon'(x)}{|x|^{\alpha}}\big|+a^{\beta-1}\int_{|x|\geq a^{-1}}|x|^{\beta}\big|d\frac{\alpha\epsilon(x)}{|x|^{\alpha}}\big|,
\end{align}
the desired conclusion follows.
\end{proof}

$\bullet$ \underline{{\bf $\alpha=1$}} {\bf :}

\begin{lem}\label{ml3}
Consider $\alpha=1$ and $\delta=0.$ Let $X$ have a distribution $F_{X}$ with the form (\ref{dis}), $X$ and $Y$ are independent. For any $0<a\leq(2A)^{-1}\wedge1$ and $f$ is defined as above, denote
\begin{align*}
T_{2}\!:=\!\Big|\mathbb{E}\big[Xf'(Y+aX)\big]\!-\!\mathbb{E}[X{\bf 1}_{(-1,1)}(aX)]\mathbb{E}[f'(Y)]-\frac{2A}{d_{1}}\mathbb{E}[\mathcal{L}^{1,0}f(Y)]\Big|,
\end{align*}
then
\begin{align*}
T_{2}\leq C_{\beta,A}\Big[&a-a\log a+a\int_{-a^{-1}}^{a^{-1}}|x|^{2}\big(2-\log|ax|\big)\big|d\frac{\epsilon(x)}{|x|}\big|\\
&+a^{\beta-1}\int_{|x|\geq a^{-1}}|x|^{\beta}\big(\big|d\frac{x\epsilon'(x)}{|x|}\big|+\big|d\frac{\epsilon(x)}{|x|}\big|\big)\Big].
\end{align*}
\end{lem}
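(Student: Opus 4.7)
The plan is to follow the same blueprint as the proof of Lemma \ref{ml2}, with modifications required by the logarithmic (instead of H\"older) regularity of $f'$ in the $\alpha=1$ case (Proposition \ref{P:fc}) and by the truncation $\mathbf{1}_{(-1,1)}$ appearing in the L\'evy representation of $\mathcal L^{1,0}$.

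\emph{Step 1: rewrite the operator term.} By Proposition \ref{properties}b, together with the change of variables $t=au$, I would first express
\[
\frac{2A}{d_1}\mathbb{E}[\mathcal L^{1,0}f(Y)]=A\,\mathbb{E}\int_{-\infty}^{\infty}\frac{uf'(Y+au)-u\mathbf{1}_{(-1,1)}(au)f'(Y)}{u^{2}}\,du.
\]
Splitting the integral at $|u|=2A$ and noting that the measure $A\,du/u^{2}$ integrates to $1$ on $\{|u|\ge 2A\}$, I would introduce an auxiliary variable $\tilde X$, independent of $Y$, with $\mathbb{P}(\tilde X>x)=A/x$ for $x\ge 2A$ and $\mathbb{P}(\tilde X\le x)=A/|x|$ for $x\le -2A$. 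This yields the decomposition
\[
\frac{2A}{d_1}\mathbb{E}[\mathcal L^{1,0}f(Y)]=\mathbb{E}[\tilde Xf'(Y+a\tilde X)]-\mathbb{E}[\tilde X\mathbf{1}_{(-1,1)}(a\tilde X)]\mathbb{E}[f'(Y)]+\mathcal R,
\]
where $\mathcal R$ is the contribution from $|u|\le 2A$. Since $a\le(2A)^{-1}$ forces $|au|<1$ on that range, $\mathbf{1}_{(-1,1)}(au)=1$ and $\mathcal R=A\,\mathbb{E}\int_{-2A}^{2A}(f'(Y+au)-f'(Y))u^{-1}du$. Applying \eqref{e:holdf'c} and a direct computation then produces $|\mathcal R|\le C_{A}(a-a\log a)$, which is the first term of the stated bound.

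\emph{Step 2: reduce to an integral against $F_X-F_{\tilde X}$, treat the inner range.} Writing $F_{\tilde X}$ for the distribution of $\tilde X$, we have
\[
T_{2}\le \mathbb{E}\Big|\int_{-\infty}^{\infty}[xf'(Y+ax)-x\mathbf{1}_{(-1,1)}(ax)f'(Y)]\,d(F_{X}-F_{\tilde X})(x)\Big|+|\mathcal R|,
\]
and I would split the outer integral at $|x|=a^{-1}$. On $|x|\le a^{-1}$ one has $\mathbf{1}_{(-1,1)}(ax)=1$, so the integrand equals $x[f'(Y+ax)-f'(Y)]$. The log-H\"older estimate \eqref{e:holdf'c} gives $|x[f'(Y+ax)-f'(Y)]|\le C a\,x^{2}(2-\log|ax|)$, and integrating against $|d(F_{X}-F_{\tilde X})|$, whose density is dictated by $|d\frac{\epsilon(x)}{|x|}|$ up to two bounded point masses at $\pm 2A$ (which contribute an $O(a)$ error absorbable into $|\mathcal R|$), produces the second term in the bound.

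\emph{Step 3: integration-by-parts on the outer range.} On $|x|>a^{-1}$ one has $\mathbf{1}_{(-1,1)}(ax)=0$, and the contribution is $\mathbb{E}|\int_{|x|\ge a^{-1}}xf'(Y+ax)\,d\frac{\epsilon(x)}{|x|}|$. I would imitate the cascade used in Lemma \ref{ml2}: first decompose $d\frac{\epsilon(x)}{|x|}=\frac{x\epsilon'(x)}{x^{2}}dx-\frac{\epsilon(x)}{x^{2}}dx$; next, using $x\epsilon'(x)\to 0$, write $\epsilon'(x)=-\int_{x}^{\infty}d\epsilon'(t)$ and swap order by Fubini so the inner $x$-integral yields $a^{-1}[f(Y+at)-f(Y+1)]$; finally apply the H\"older bound \eqref{e:Holf1} in the form $|f(Y+at)-f(Y+1)|\le C_{\beta}|at|^{\beta}$ for $at\ge 1$. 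This contributes $C_{\beta}a^{\beta-1}\int_{|x|\ge a^{-1}}|x|^{\beta}|d\frac{x\epsilon'(x)}{|x|}|$, with a parallel argument handling the $\epsilon(x)/|x|$ piece and the negative half-line. Summing the three pieces and $|\mathcal R|$ gives the announced estimate.

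The main obstacle, relative to Lemma \ref{ml2}, is the bookkeeping of the logarithmic factors arising from \eqref{e:holdf'c}: one must verify that the singular kernel $1/u$ in $\mathcal R$ is tamed into a clean $a(1-\log a)$ rather than a divergence, and that the factor $(2-\log|ax|)$ surviving in the inner-range estimate matches the shape displayed in the lemma. The outer-range integration-by-parts step, by contrast, is essentially identical to its $\alpha\in(0,1)$ analogue, modulo setting $\alpha=1$ in the weights.
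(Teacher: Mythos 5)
Your proposal is correct and follows essentially the same route as the paper's own argument, which is a near-verbatim adaptation of the proof of Lemma \ref{ml2}: introduce $\tilde X$ via (\ref{Fun}) with $\alpha=1$, bound the remainder $\mathcal R$ by $C_A(a-a\log a)$ using (\ref{e:holdf'c}), split the integral against $d(F_X-F_{\tilde X})$ at $|x|=a^{-1}$, estimate the inner piece with (\ref{e:holdf'c}) and (\ref{function}), and handle the outer piece by the same integration-by-parts cascade as in (\ref{1}) with (\ref{e:Holf1}). The only (harmless) looseness is your claim that the non-$\epsilon$ contribution on the inner range is merely $O(a)$; a direct computation shows it is $O(a(1-\log a))$, but this is still absorbed into the leading $a-a\log a$ term exactly as in the paper.
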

\begin{proof}
By the same argument as (\ref{result}), we have
\begin{align*}
T_{2}\leq&\mathbb{E}\Big|\int_{-\infty}^{\infty}\big[xf'(Y+ax)-x{\bf 1}_{(-1,1)}(ax)f'(Y)\big]
d\big(F_{X}(x)-F_{\tilde{X}}(x)\big)\Big|+|\mathcal{R}|,
\end{align*}
where $F_{\tilde{X}}$ and $\mathcal{R}$ are defined by (\ref{Fun}) and (\ref{R}) with $\alpha=1$ and $\delta=0,$ respectively. Moreover, by (\ref{e:holdf'c}), it is easy to verify\\
\begin{align}
|\mathcal{R}|\leq A\int_{-(2A)}^{2A}\frac{\big|f'(Y+au)-f'(Y)\big|}
{|u|}du&\leq C_{A}a\int_{-2A}^{2A}\frac{(2-\log|au|)|u|}{|u|}\dif u\nonumber\\
&\leq C_{A}(a-a\log a).
\end{align}

For the first term, we have
\begin{align*}
&\mathbb{E}\Big|\int_{-\infty}^{\infty}\big[xf'(Y+ax)-x{\bf 1}_{(-1,1)}(ax)f'(Y)\big]
d\big(F_{X}(x)-F_{\tilde{X}}(x)\big)\Big|\\
\leq&\mathbb{E}\Big|\Big(\int_{-a^{-1}}^{a^{-1}}+\int_{|x|>a^{-1}}\Big)\big[xf'(Y+ax)-x{\bf 1}_{(-1,1)}(ax)f'(Y)\big]
d\big(F_{X}(x)-F_{\tilde{X}}(x)\big)\Big|:=\mathcal{J}_{1}+\mathcal{J}_{2}.
\end{align*}
On the one hand, we have by (\ref{e:holdf'c}) and (\ref{function})
\begin{align*}
\mathcal{J}_{1}\leq&Ca\int_{-a^{-1}}^{a^{-1}}|x|^{2}\big(2-\log|ax|\big)\big|d\big(F_{X}(x)-F_{\tilde{X}}(x)\big)\big|\\
\leq&C\Big(a+a\int_{-a^{-1}}^{a^{-1}}|x|^{2}\big(2-\log|ax|\big)\big|d\frac{\epsilon(x)}{|x|}\big|\Big).
\end{align*}
On the other hand, by the same argument as the proof of (\ref{1}), we have
\begin{align*}
\mathcal{J}_{2}\leq a^{\beta-1}\int_{|x|\geq a^{-1}}|x|^{\beta}\big|d\frac{x\epsilon'(x)}{|x|}\big|+a^{\beta-1}\int_{|x|\geq a^{-1}}|x|^{\beta}\big|d\frac{\epsilon(x)}{|x|}\big|,
\end{align*}
the desired conclusion follows.
\end{proof}

\subsection{Truncation for random variable $X$}
Let $X$ have a distribution of the form (\ref{dis}), then it is obvious that $\mathbb{E}|X|^\al=\infty$ in the case $\alpha\in(0,1].$  Due to this, we need to truncate random variables
 Before giving the truncation Lemma, we first recall the \cite[Lemma 2.6]{C-X}, which will be used from time to time later.

\begin{lem} [{\cite[Lemma 2.6]{C-X}}] \label{expectation}
Let $X\geq0$ be a random variable and $t>0$, then
\begin{align*}
\mathbb{E}\big[X{\bf 1}_{(0,t)}(X)\big]
=\int_{0}^{t}\mathbb{P}(X>r)\dif r-t\mathbb{P}(X>t).
\end{align*}
\end{lem}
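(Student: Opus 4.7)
The statement is a classical identity whose proof is essentially a one-line application of the layer-cake representation together with Fubini's theorem. The plan is as follows.

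First, I would use the layer-cake formula for non-negative random variables, namely $X = \int_0^\infty \mathbf{1}_{\{r<X\}}\,dr$. Multiplying by $\mathbf{1}_{(0,t)}(X)$ and noting that $\mathbf{1}_{\{r<X\}}\mathbf{1}_{\{X<t\}} = \mathbf{1}_{\{r<X<t\}}$ whenever $r<t$ (and $\mathbf{1}_{\{r<X\}}\mathbf{1}_{\{X<t\}}=0$ whenever $r\ge t$), I get the pointwise identity
\[
X\,\mathbf{1}_{(0,t)}(X) \;=\; \int_{0}^{t} \mathbf{1}_{\{r<X<t\}}\,dr.
\]

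Second, I would take expectations and apply Tonelli's theorem (everything is non-negative, so it applies unconditionally) to swap $\mathbb{E}$ and $\int_0^t$:
\[
\mathbb{E}\bigl[X\,\mathbf{1}_{(0,t)}(X)\bigr] \;=\; \int_{0}^{t} \mathbb{P}(r<X<t)\,dr.
\]
Then I would decompose $\mathbb{P}(r<X<t) = \mathbb{P}(X>r) - \mathbb{P}(X\ge t)$, and since $\mathbb{P}(X\ge t)$ and $\mathbb{P}(X>t)$ agree except possibly at the at most countably many atoms of the law of $X$ (which form a Lebesgue-null set as a function of $t$, and the identity we are proving is anyway an identity in $t$ that is insensitive to this discrepancy on the atom set under the dr-integration), the claim follows after evaluating the trivial integral $\int_0^t dr = t$.

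Alternatively, and perhaps more cleanly, I would give an integration-by-parts proof: writing $F(r)=\mathbb{P}(X\le r)$, one has $\mathbb{E}[X\mathbf{1}_{(0,t)}(X)] = \int_{0}^{t} r\,dF(r)$, and integration by parts yields $tF(t) - \int_0^t F(r)\,dr = \int_0^t (1-F(r))\,dr - t(1-F(t))$, which is exactly the right-hand side. There is really no obstacle here; the only tiny point of care is the distinction between $\mathbb{P}(X>t)$ and $\mathbb{P}(X\ge t)$, which is immaterial for the identity as stated.
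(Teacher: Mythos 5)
The paper imports this lemma from \cite{C-X} and does not reprove it, so there is no internal proof to compare against; your layer-cake/Tonelli route is the standard and correct way to establish it. However, the way you dispatch the $\mathbb{P}(X>t)$ versus $\mathbb{P}(X\ge t)$ discrepancy is wrong, and it is worth being precise because it is exactly the only non-trivial point. Your correct computation yields
\begin{align*}
\mathbb{E}\big[X\,\mathbf{1}_{(0,t)}(X)\big]=\int_0^t\mathbb{P}(X>r)\,\dif r-t\,\mathbb{P}(X\ge t),
\end{align*}
and the boundary term $t\,\mathbb{P}(X\ge t)$ sits at the \emph{fixed} point $t$; it is not inside the $\dif r$-integral, so the fact that atoms of the law of $X$ form a Lebesgue-null set buys you nothing here. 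If $t$ is an atom of $X$ the identity as literally stated does fail: take $X\equiv 1$ and $t=1$, where the left side is $0$ but $\int_0^1\mathbb{P}(X>r)\,\dif r-1\cdot\mathbb{P}(X>1)=1$. The clean resolution is that the indicator should be read as $\mathbf{1}_{(0,t]}$, in which case the very same layer-cake argument gives $\mathbb{P}(r<X\le t)=\mathbb{P}(X>r)-\mathbb{P}(X>t)$ for $r<t$ and the stated right-hand side comes out exactly. Your integration-by-parts alternative has the same issue: for a Lebesgue--Stieltjes integral, $\int_{(0,t)}r\,\dif F(r)$ and $\int_{(0,t]}r\,\dif F(r)$ differ by the jump at $t$, and only the latter matches the stated formula. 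None of this matters for the paper's applications, where $|X_1|$ has a density on the range where the lemma is invoked, but your claim that the distinction ``is immaterial for the identity as stated'' is not correct and should be replaced by either the $(0,t]$ reading or the assumption that $t$ is a continuity point of the law of $X$.
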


Now, we are in a position to give the truncation lemma.

\begin{lem}\label{ltrun}
Consider $\alpha\in(0,1]$ and when $\alpha=1$ we assume $\delta=0.$ Let $X$ have a distribution of the form (\ref{dis}) and $f$ be defined as above. Then for any $0<a<1$ and $z\in\mathbb{R},$ we have\\
1.) when $\alpha\in(0,1)$
\begin{align*}
\quad\mathbb{E}\Big[\big|\mathcal{L}^{\alpha,\delta}f(z)-\mathcal{L}^{\alpha,\delta}f(z+aX)\big|\Big]
\leq C_{\alpha,\beta,A,K}a^{\alpha}.
\end{align*}
2.) when $\alpha=1,$
\begin{align*}
\quad\mathbb{E}\Big[\big|\mathcal{L}^{1,0}f(z)-\mathcal{L}^{1,0}f(z+aX)\big|\Big]
\leq C_{\beta,A,K}\big(1+\log a^{-1}+(\log a)^{2}\big)a.
\end{align*}
\end{lem}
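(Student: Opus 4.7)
The plan is to split $\mathbb{E}|\mathcal{L}^{\alpha,\delta}f(z+aX)-\mathcal{L}^{\alpha,\delta}f(z)|$ at the threshold $|aX|=1$. On the event $\{|aX|\le 1\}$ the difference will be controlled by the H\"older / log-modulus of continuity of $\mathcal{L}^{\alpha,\delta}f$ supplied by Propositions \ref{prop3} and \ref{P:fc}, while on $\{|aX|>1\}$ the trivial bound $2\|\mathcal{L}^{\alpha,\delta}f\|_\infty$ combined with the power-law tail of $X$ will suffice. The only information about $X$ needed here is the crude tail estimate $\mathbb{P}(|X|>r)\le (2A+2K)\,r^{-\alpha}$ valid for $r\ge 1$, which is immediate from (\ref{dis}) and the assumption $\sup|\epsilon|\le K$.

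Case $\alpha\in(0,1)$. Using (\ref{e:FraFBou}), the tail piece is bounded by $2\|\mathcal{L}^{\alpha,\delta}f\|_\infty\mathbb{P}(|X|>a^{-1})\le C_{\alpha,\beta,A,K}\,a^\alpha$. For the body I would fix an auxiliary exponent $\gamma\in(\alpha,1)$ and invoke the H\"older estimate (\ref{regularity1}) to get a bound of the form $C_{\alpha,\beta,\gamma}\,a^\gamma\,\mathbb{E}[|X|^\gamma{\bf 1}_{|X|\le a^{-1}}]$. An Abel-type integration by parts against the survival function together with the tail bound then gives $\mathbb{E}[|X|^\gamma{\bf 1}_{|X|\le a^{-1}}]\le C_{\alpha,\gamma,A,K}\,a^{\alpha-\gamma}$, where the restriction $\gamma>\alpha$ is precisely what ensures the resulting truncated integral is of the right order; the two factors then combine into the desired $a^\alpha$.

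Case $\alpha=1$. The same splitting strategy applies. The tail contribution is bounded by $2\|\mathcal{L}^{1,0}f\|_\infty\mathbb{P}(|X|>a^{-1})\le C\,a$ thanks to (\ref{e:FraFBouc}) and the $1/r$ tail. For the body, estimate (\ref{e:FraFHolc}) gives $|\mathcal{L}^{1,0}f(z+aX)-\mathcal{L}^{1,0}f(z)|\le C_\beta|aX|(1-\log|aX|)$ on $\{|aX|\le 1\}$. I would expand $|aX|(1-\log|aX|)=a|X|+a(\log a^{-1})|X|-a|X|\log|X|$ and handle the three pieces separately. Integration by parts against the survival function, together with $\mathbb{P}(|X|>r)\le C_{A,K}/r$ for $r\ge 1$, yields $\mathbb{E}[|X|{\bf 1}_{|X|\le a^{-1}}]\le C(1+\log a^{-1})$ and $\mathbb{E}[|X|\log|X|\,{\bf 1}_{1\le|X|\le a^{-1}}]\le C(\log a^{-1})^2$, while the contribution of $\{|X|<1\}$ to the last quantity is bounded by a constant since $r\mapsto r|\log r|$ is bounded on $[0,1]$. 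Summing produces exactly $C_{\beta,A,K}\,a(1+\log a^{-1}+(\log a)^2)$.

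The computation is at its core an exercise in matching the H\"older regularity of $\mathcal{L}^{\alpha,\delta}f$ against the $\alpha$-stable tail of $X$, so that the two pieces of the split balance each other at $a^\alpha$ (respectively $a$ up to logarithms). The only mildly delicate point is the logarithmic bookkeeping in the critical case $\alpha=1$, but every required moment bound reduces to integration by parts against $r\mapsto \mathbb{P}(|X|>r)$.
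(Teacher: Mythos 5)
Your proposal is correct and follows essentially the same route as the paper: split the expectation at $|X|=a^{-1}$, control the tail via $\|\mathcal{L}^{\alpha,\delta}f\|_\infty$ combined with the power-law tail of $X$, control the body via the H\"older (resp.\ log-modulus) estimate of Propositions \ref{prop3} and \ref{P:fc}, and evaluate the truncated moment by integration by parts against the survival function (that is Lemma \ref{expectation}). In the case $\alpha\in(0,1)$ the paper picks the concrete auxiliary exponent $\gamma=\frac{1+\alpha}{2}$ where you keep it generic, and in the case $\alpha=1$ the paper integrates $x(1-\log(ax))=\int_0^x(-\log(ay))\,dy$ by parts in one shot rather than expanding into the three pieces $a|X|$, $a(\log a^{-1})|X|$, $-a|X|\log|X|$, but these are cosmetic differences and the bookkeeping comes out the same.
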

\begin{proof}
Observe
\begin{align*}
&\quad\mathbb{E}\Big[\big|\mathcal{L}^{\alpha,\delta}f(z)-\mathcal{L}^{\alpha,\delta}f(z+aX)\big|\Big]\\
&=\mathbb{E}\Big[\big|\mathcal{L}^{\alpha,\delta}f(z)-\mathcal{L}^{\alpha,\delta}f(z+aX)\big|\big[{\bf 1}_{(a^{-1},\infty)}(|X|)+{\bf 1}_{((0,a^{-1}))}(|X|)\big]\Big]:=\mathrm{I}+\mathrm{II}.
\end{align*}
When $\alpha\in(0,1),$ one can write by (\ref{e:FraFBou}) and (\ref{dis})
\begin{align*}
\mathrm{I}\leq C_{\alpha,\beta}\mathbb{P}\big(|X|>a^{-1}\big)\leq C_{\alpha,\beta}\big(A+\sup_{|x|\geq a^{-1}}|\epsilon(x)|\big)a^{\alpha}\leq C_{\alpha,\beta,A,K}a^{\alpha},
\end{align*}
whereas by (\ref{regularity1}) with $\gamma=\frac{1+\alpha}{2}\in(\alpha,1)$ and Lemma \ref{expectation}
\begin{align*}
\mathrm{II}\leq C_{\alpha,\gamma}a^{\gamma}\mathbb{E}[|X|^{\gamma}{\bf 1}_{((0,a^{-1}))}(|X|)]
&\leq C_{\alpha,\gamma}a^{\gamma}\int_0^{a^{-1}}\mathbb{P}(|X|>y)y^{\gamma-1}\dif y\\
&\leq C_{\alpha,\gamma}a^{\gamma}\int_0^{a^{-1}}\frac{2(A+K)}{y^{\alpha-\gamma+1}}\dif y\leq C_{\alpha,A,K}a^{\alpha}.
\end{align*}
When $\alpha=1$ and $\delta=0,$ one can write by (\ref{e:FraFBouc})
\begin{align*}
\mathrm{I}\leq C_{\beta}\mathbb{P}\big(|X|>a^{-1}\big)\leq C_{\beta}\big(A+\sup_{|x|\geq a^{-1}}|\epsilon(x)|\big)a\leq C_{\beta,A,K}a,
\end{align*}
whereas by (\ref{e:FraFHolc})
\begin{align*}
\mathrm{II}&\leq C_{\beta}a\mathbb{E}\big[|X|(1-\log|aX|){\bf 1}_{(0,a^{-1})}(|X|)\big].
\end{align*}
Further,
by integration by parts
\begin{align}\label{s2}
&\quad\mathbb{E}\big[|X|(1-\log|aX|){\bf 1}_{(0,a^{-1})}(|X|)\big]=\int_{0}^{a^{-1}}x(1-\log (ax))dF_{|X|}(x)\nonumber\\
&=\int_{0}^{a^{-1}}\int_0^x(-\log (ay))\dif ydF_{|X|}(x)=\int_{0}^{a^{-1}}(-\log (ay))\mathbb{P}\left(y<|X|\le a^{-1}\right)\dif y\nonumber\\
&\leq \int_{0}^{1}(-\log (ay))\dif y+\int_{1}^{a^{-1}}(-\log (ay))\frac{2(A+K)}{y}\dif y\nonumber\\
&=1-\log a+(A+K)(\log a)^2.
\end{align}
Hence, we have
\begin{align*}
\mathrm{II}&\leq C_{\beta,A,K}\big(1+\log a^{-1}+(\log a)^{2}\big)a,
\end{align*}
the desired conclusion follows.
\end{proof}

\begin{lem}\label{ltrun2}
Consider $\alpha=1$ and $\delta=0.$ Let $X$ have a distribution of the form (\ref{dis}) and $f$ be defined as above. Then for any $0<a<1$ and $z\in\mathbb{R},$ we have
\begin{align*}
\mathbb{E}\big|f'(z)-f'(z+aX)\big|\leq C_{\beta,A,K}\big(1+\log a^{-1}+(\log a)^{2}\big)a.
\end{align*}
\end{lem}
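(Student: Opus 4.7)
The statement is the $f'$-analog of the $\alpha=1$ case of Lemma \ref{ltrun}, and the argument is structurally identical: one splits according to whether $|aX|$ is larger or smaller than $1$ and uses the two available regularity estimates for $f'$ — the uniform bound \eqref{e:F'LeAlpc} for the tail part, and the log-Lipschitz modulus \eqref{e:holdf'c} on $\{|aX|<1\}$.

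Concretely, I would write
\[
\mathbb{E}\big|f'(z)-f'(z+aX)\big|
=\mathrm{I}+\mathrm{II},
\]
where $\mathrm{I}$ is the contribution of $\{|X|>a^{-1}\}$ and $\mathrm{II}$ the contribution of $\{|X|\le a^{-1}\}$. For $\mathrm{I}$, \eqref{e:F'LeAlpc} gives $|f'(z)-f'(z+aX)|\le 2$, and the tail bound coming from \eqref{dis} yields $\mathbb{P}(|X|>a^{-1})\le 2(A+K)a$, so $\mathrm{I}\le C_{A,K}\,a$.

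For $\mathrm{II}$, since $|aX|<1$ on this event, \eqref{e:holdf'c} gives
\[
|f'(z)-f'(z+aX)|\le C\,(2-\log|aX|)\,|aX|=C\,a\,|X|\big(2-\log(a|X|)\big),
\]
hence
\[
\mathrm{II}\le C\,a\;\mathbb{E}\!\left[|X|\big(2-\log(a|X|)\big)\mathbf{1}_{(0,a^{-1}]}(|X|)\right].
\]
The remaining expectation is handled by the very same integration-by-parts computation performed in \eqref{s2}: writing $x(2-\log(ax))=\int_0^x(1-\log(ay))\,dy+x$ and applying Lemma \ref{expectation} together with the tail estimate $\mathbb{P}(|X|>y)\le 2(A+K)/y$ for $y\ge 1$ gives a bound of order $1+\log a^{-1}+(\log a)^2$. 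Multiplying by $a$ and combining with $\mathrm{I}$ produces the stated bound.

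No new ideas are needed beyond those in Lemma \ref{ltrun}; the only point requiring a little care is that the log-Lipschitz estimate \eqref{e:holdf'c} is only valid for $|aX|<1$, which is exactly why the splitting threshold $a^{-1}$ is natural, and why the tail piece $\mathrm{I}$ must be controlled by the uniform bound \eqref{e:F'LeAlpc} rather than by \eqref{e:holdf'c}. I do not anticipate any essential obstacle.
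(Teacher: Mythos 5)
Your proposal matches the paper's proof step for step: the split at $|X|=a^{-1}$, the use of the uniform bound \eqref{e:F'LeAlpc} on the tail event and of the log-Lipschitz modulus \eqref{e:holdf'c} on the complementary event, and the invocation of the integration-by-parts computation \eqref{s2} to finish. No differences of substance.
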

\begin{proof}
Observe
\begin{align*}
\mathbb{E}\big|f'(z)-f'(z+aX)\big|&=\mathbb{E}\Big[\big|f'(z)-f'(z+aX)\big|\big[{\bf 1}_{(a^{-1},\infty)}(|X|)+{\bf 1}_{((0,a^{-1}))}(|X|)\big]\Big]\\
&\leq2\mathbb{P}(|X|>a^{-1})+C_{\beta}a\mathbb{E}\big[|X|(2-\log|aX|){\bf 1}_{(0,a^{-1})}(|X|)\big]\\
&\leq C_{\beta}\Big[\big(A+\sup_{|x|\geq a^{-1}}|\epsilon(x)|\big)a+\mathbb{E}\big[|X|(2-\log|aX|){\bf 1}_{(0,a^{-1})}(|X|)\big]a\Big]\\
&\leq C_{\beta,A,K}\big(1+\log a^{-1}+(\log a)^{2}\big)a,
\end{align*}
where the first inequality thanks to (\ref{e:F'LeAlpc}) and (\ref{e:holdf'c}), the last inequality thanks to (\ref{s2}). The proof is complete.
\end{proof}
\subsection{Leave-one out method and proof of Theorem \ref{thm2}}
With the above results, we can extend the celebrated Stein's leave-one out approach of normal approximation (see \cite[pages 5-6]{ChGoSh11}) .

Recall the notation introduced in Theorem \ref{thm2}, we have $\sigma=(\frac{2A\alpha}{d_{\alpha}})^{\frac{1}{\alpha}}$ and let $S_{n,i}=S_{n}-\frac{n^{-\frac{1}{\alpha}}}{\sigma}X_{i}.$
By observing that $S_{n,i}$ and $X_{i}$ are independent, one can write
\begin{align*}
\Big|\mathbb{E}\big[S_{n}f'(S_{n})\big]-\alpha\mathbb{E}\big[(\mathcal{L}^{\alpha,\delta}f)(S_{n})\big]\Big|\leq \mathrm I+ \mathrm {II}+ \mathrm{III},
\end{align*}
where
\begin{align*}
\mathrm I=\frac{\alpha}{n}\sum_{i=1}^{n}\Big|\mathbb{E}\big[(\mathcal{L}^{\alpha,\delta}f)(S_{n,i})-\mathbb{E}\big[(\mathcal{L}^{\alpha,\delta}f)(S_{n})\big]\Big|,
\end{align*}
in the case $\alpha\in(0,1)$,
\begin{align*}
\mathrm{II}=\frac{n^{-\frac{1}{\alpha}}}{\sigma}\sum_{i=1}^{n}\Big|\mathbb{E}\big[&X_{i}f'(S_{n,i}+\frac{n^{-\frac{1}{\alpha}}}{\sigma}X_{i})\big]-\mathbb{E}\big[X_{i}{\bf 1}_{(0,\sigma n^{\frac{1}{\alpha}})}(|X_{i}|)\big]\mathbb{E}\big[f'(S_{n,i})\big]\\
&-\frac{2A\alpha^{2}}{d_{\alpha}}(\frac{n^{-\frac{1}{\alpha}}}{\sigma})^{\alpha-1}\mathbb{E}\big[(\mathcal{L}^{\alpha,\delta}f)(S_{n,i})-\frac{\delta d_{\alpha}}{\alpha(1-\alpha)}f'(S_{n,i})\big]\Big|,
\end{align*}
\begin{align*}
\mathrm{III}=\frac{n^{-\frac{1}{\alpha}}}{\sigma}\sum_{i=1}^{n}\Big|\frac{2A\alpha\delta}{1-\alpha}(\frac{n^{-\frac{1}{\alpha}}}{\sigma})^{\alpha-1}\mathbb{E}\big[f'(S_{n,i})\big]-\mathbb{E}\big[X_{i}{\bf 1}_{(0,\sigma n^{\frac{1}{\alpha}})}(|X_{i}|)\big]\mathbb{E}\big[f'(S_{n,i})\big]\Big|,
\end{align*}
and in the case $\alpha=1,$ $\delta=0,$
\begin{align*}
\mathrm{II}=\frac{n^{-1}}{\sigma}\sum_{i=1}^{n}\Big|\mathbb{E}\big[&X_{i}f'(S_{n,i}+\frac{n^{-1}}{\sigma}X_{i})\big]-\mathbb{E}\big[X_{i}{\bf 1}_{(0,\sigma n)}(|X_{i}|)f'(S_{n,i})\big]-\frac{2A}{d_{1}}\mathbb{E}\big[(\mathcal{L}^{1,0}f)(S_{n,i})\big]\Big|,
\end{align*}
\begin{align*}
\mathrm{III}=\frac{n^{-1}}{\sigma}\sum_{i=1}^{n}\big|\mathbb{E}\big[X_{i}{\bf 1}_{(0,\sigma n)}(|X_{i}|)\big]\big|\Big|\mathbb{E}\Big[f'(S_{n,i})-f'\big(S_{n,i}+\frac{n^{-1}}{\sigma}X_{i}\big)\Big]\Big|.
\end{align*}
1) When $\alpha\in(0,1),$ we have by Lemma \ref{ltrun},
\begin{align*}
I\leq&C_{\alpha,\beta,A,K}n^{-1}.
\end{align*}
By Lemma \ref{ml2}, we have
\begin{align*}
\mathrm{II}\leq C_{\alpha,\beta,A}\Big[n^{-\frac{1}{\alpha}}+n^{-\frac{1}{\alpha}}\int_{-\sigma n^{\frac{1}{\alpha}}}^{\sigma n^{\frac{1}{\alpha}}}|x|^{1+\alpha}\big|d\frac{\epsilon(x)}{|x|^{\alpha}}\big|+
n^{\frac{\alpha-\beta}{\alpha}}
\int_{|x|\geq \sigma n^{\frac{1}{\alpha}}}|x|^{\beta}\big(\big|d\frac{x\epsilon'(x)}{|x|^{\alpha}}\big|+\big|d\frac{\epsilon(x)}{|x|^{\alpha}}\big|\big)\Big].
\end{align*}
In addition, we have by Lemma \ref{expectation}
\begin{align*}
\mathbb{E}\Big[X_{i}{\bf 1}_{(0,\sigma n^{\frac{1}{\alpha}})}(|X_{i}|)\Big]=\frac{2A\alpha\delta}{1-\alpha}\big(\frac{n^{-\frac{1}{\alpha}}}{\sigma}\big)^{\alpha-1}&+(1+\delta)\int_{0}^{\sigma n^{\frac{1}{\alpha}}}\frac{\epsilon(x)}{x^{\alpha}}\dif x-(1-\delta)\int_{0}^{\sigma n^{\frac{1}{\alpha}}}\frac{\epsilon(-x)}{x^{\alpha}}\dif x\\
&+\big(\frac{n^{-\frac{1}{\alpha}}}{\sigma}\big)^{\alpha-1}\big[(1-\delta)\epsilon(-\sigma n^{\frac{1}{\alpha}})-(1+\delta)\epsilon(\sigma n^{\frac{1}{\alpha}})\big],
\end{align*}
which follows that
\begin{align*}
\mathrm{III}\leq C_{\alpha}\Big[\sup_{|x|\geq\sigma n^{\frac{1}{\alpha}}}|\epsilon(x)|+n^{\frac{\alpha-1}{\alpha}}\Big|(1+\delta)\int_{0}^{\sigma n^{\frac{1}{\alpha}}}\frac{\epsilon(x)}{x^{\alpha}}\dif x-(1-\delta)\int_{0}^{\sigma n^{\frac{1}{\alpha}}}\frac{\epsilon(-x)}{x^{\alpha}}\dif x\Big|\Big].
\end{align*}
2) When $\alpha=1$ and $\delta=0,$ we have by Lemma \ref{ltrun},
\begin{align*}
\mathrm I\leq&C_{\beta,A,K}\big(1+\log n+(\log n)^{2}\big)n^{-1}.
\end{align*}
By Lemma \ref{ml3}, we have
\begin{align*}
\mathrm{II}\leq C_{\beta,A}\Big[&n^{-1}(1+\log n)+n^{-1}\int_{-\sigma n}^{\sigma n}|x|^{2}\big(2-\log|\frac{n^{-1}}{\sigma}x|\big)\big|d\frac{\epsilon(x)}{|x|}\big|\\
&+n^{1-\beta}
\int_{|x|\geq \sigma n}|x|^{\beta}\big(\big|d\frac{x\epsilon'(x)}{|x|}\big|+\big|d\frac{\epsilon(x)}{|x|}\big|\big)\Big].
\end{align*}
In addition, we have by Lemma \ref{expectation}
\begin{align*}
\Big|\mathbb{E}\Big[X_{i}{\bf 1}_{(0,\sigma n)}(|X_{i}|)\Big]\Big|&=\Big|\int_{0}^{\sigma n}\frac{\epsilon(x)-\epsilon(-x)}{x}\dif x+\epsilon(-\sigma n)-\epsilon(\sigma n)\Big|\\
&\leq \Big|\int_{0}^{\sigma n}\frac{\epsilon(x)-\epsilon(-x)}{x}\dif x\Big|+2K,
\end{align*}
which follows from Lemma \ref{ltrun2} that
\begin{align*}
\mathrm{III}\leq C_{\beta,A,K}\big(1+\log n+(\log n)^{2}\big)\Big(n^{-1}+n^{-1}\Big|\int_{0}^{\sigma n}\frac{\epsilon(x)-\epsilon(-x)}{x}\dif x\Big|\Big).
\end{align*}
Combining all of above, the desired conclusion follows.
\qed

\section{Three Examples}\label{s:examples}

\medskip
{\it \underline{Example 1}: Pareto distribution case \cite{DaNa02,KuKe00}}.
Our first example is the simplest situation, that is, the case where $X_1$ is distributed according to a (possibly non-symmetric)
Pareto distribution of the form
\begin{align*}
\mathbb{P}(X_{1}> x)=\frac{1+\delta}{2|x|^{\alpha}},\quad x\geq1,\qquad \mathbb{P}(X_{1}\leq x)=\frac{1-\delta}{2|x|^{\alpha}},\quad x\leq-1,
\end{align*}
with $\alpha\in(0,1]$ and $\delta\in[-1,1]$.

In this case, (\ref{dis}) holds with $A=\frac{1}{2},$
$\epsilon(x)=\frac{|x|^\alpha-1}{2}{\bf 1}_{(-1,1)}(x)$
and $K=\frac{1}{2}.$
Clearly, we see that $L=1$, $\epsilon'(x)=0$ and
\begin{align*}
 F_X(x)=F_{\tilde{X}}(x).
\end{align*}
According to Theorem \ref{thm2},

\noindent
1) When $\alpha\in(0,1),$ we have
\begin{align*}
\quad \dwb\big(S_n,Z\big)
\leq C_{\alpha,\beta,\delta}(n^{-1}+\frac{1-\alpha}{\alpha}\delta n^{\frac{\alpha-1}{\alpha}})
\end{align*}
where $Z\sim S_\al(\de)$. In particular, when $\delta=0,$ we immediately obtain $\dwb\big(S_n,Z\big)=O(n^{-1}).$
\smallskip

\noindent
2) When $\alpha=1$ and $\delta=0,$ we have
\begin{align*}
\quad \dwb\big(S_n,Z\big)\leq C_{\beta}n^{-1}(\log n)^{2}
\end{align*}
where $Z\sim S_1(0)$.
\qed
\medskip

{\it \underline{Example 2}: Heavy tail with mixed decay rate \cite{KuKe00}}. We consider
\begin{equation}\label{e:5.1}
\begin{cases}
\mathbb{P}(X_{1}> x)=(A|x|^{-\alpha}+\widetilde{A}|x|^{-\widetilde{\alpha}})(1+\delta),\quad x\geq 1,\\
\mathbb{P}(X_{1}\leq x)=(A|x|^{-\alpha}+\widetilde{A}|x|^{-\widetilde{\alpha}})(1-\delta),\quad x\leq-1,
\end{cases}
\end{equation}
with $\alpha\in(0,1]$, $\alpha<\widetilde{\alpha}$, $A+\widetilde{A}=\frac12$ and $\delta\in[-1,1]$.

In this case, (\ref{dis}) holds with
\[
\epsilon(x)=\widetilde{A}|x|^{\alpha-\widetilde{\alpha}}{\bf 1}_{[1,\infty)}(|x|)+\Big(\frac{|x|^\alpha}{2}-A\Big){\bf 1}_{(-1,1)}(x)
\]
and $K=A\vee\tilde{A}.$ Since $x\epsilon'(x)=\tilde{A}(\alpha-\tilde{\alpha})|x|^{\alpha-\tilde{\alpha}}$ for any $|x|>1,$ we have by Theorem \ref{thm2} with $L=1$\\
\noindent
1) When $\alpha\in(0,1),$ we have
\begin{align*}
&n^{-\frac{1}{\alpha}}\int_{-\sigma n^{\frac{1}{\alpha}}}^{\sigma n^{\frac{1}{\alpha}}}|x|^{1+\alpha}\big|d\frac{\epsilon(x)}{|x|^{\alpha}}\big|\\
\leq& C_{\alpha,K,\tilde{A}}\big(n^{-\frac{1}{\alpha}}+n^{\frac{\alpha-\tilde{\alpha}}{\alpha}}\big)+C_{\alpha,\tilde{A}}n^{-\frac{1}{\alpha}}
\begin{cases}
\frac{1}{\tilde{\alpha}-\alpha-1}, &\tilde{\alpha}>1+\alpha,\\
\log(\sigma n^{\frac{1}{\alpha}}), &\tilde{\alpha}=1+\alpha,\\
\frac{\sigma^{1+\alpha-\tilde{\alpha}}}{1+\alpha-\tilde{\alpha}}n^{\frac{1+\alpha-\tilde{\alpha}}{\alpha}}, &\alpha<\tilde{\alpha}<1+\alpha,
\end{cases}
\end{align*}
\begin{align*}
n^{\frac{\alpha-\beta}{\alpha}}
\int_{|x|\geq \sigma n^{\frac{1}{\alpha}}}|x|^{\beta}\big(\big|d\frac{x\epsilon'(x)}{|x|^{\alpha}}\big|+\big|d\frac{\epsilon(x)}{|x|^{\alpha}}\big|\big)\leq C_{\alpha,\tilde{\alpha},\tilde{A}}n^{\frac{\alpha-\tilde{\alpha}}{\alpha}},
\end{align*}
and
\begin{align}\label{symmetric}
\mathcal{R}_{1,n}\leq& C_{\alpha,\tilde{\alpha},A,\tilde{A}}\big(n^{\frac{\alpha-\tilde{\alpha}}{\alpha}}+\delta n^{\frac{\alpha-1}{\alpha}}\big)+2\tilde{A}\delta n^{\frac{\alpha-1}{\alpha}}
\begin{cases}
\frac{1}{\tilde{\alpha}-1}, &\tilde{\alpha}>1,\\
\log(\sigma n^{\frac{1}{\alpha}}), &\tilde{\alpha}=1,\\
\frac{\sigma^{1-\tilde{\alpha}}}{1-\tilde{\alpha}}n^{\frac{1-\tilde{\alpha}}{\alpha}}, &\tilde{\alpha}<1.
\end{cases}
\end{align}
Combining all of above, we have when $\tilde{\alpha}\neq1$
\begin{align*}
\dwb\big(S_n,Z\big)=O\big(n^{-1}+n^{\frac{\alpha-\tilde{\alpha}}{\alpha}}+\delta n^{\frac{\alpha-1}{\alpha}}\big),
\end{align*}
and when $\tilde{\alpha}=1,$
\begin{align*}
\dwb\big(S_n,Z\big)=O\big(n^{-1}+n^{\frac{\alpha-1}{\alpha}}+\delta n^{\frac{\alpha-1}{\alpha}}\log n\big),
\end{align*}
where $Z\sim S_\al(\de)$.

\begin{rem}
In (\ref{symmetric}), when $\delta=0,$ we immediately obtain
\begin{align*}
\dwb\big(S_n,Z\big)=O\big(n^{-1}+n^{\frac{\alpha-\tilde{\alpha}}{\alpha}}\big).
\end{align*}
\end{rem}

\noindent
2) When $\alpha=1$ and $\delta=0,$ we have
\begin{align*}
&n^{-1}\int_{-\sigma n}^{\sigma n}|x|^{2}\big(2-\log|\frac{n^{-1}}{\sigma}x|\big)\big|d\frac{\epsilon(x)}{|x|}\big|\\
\leq&C_{K,\tilde{A}}\big(n^{-1}+n^{-1}\log n+n^{1-\tilde{\alpha}}\big)+C_{\tilde{A}}n^{-1}
\begin{cases}
\frac{1}{\tilde{\alpha}-2}, &\tilde{\alpha}>2,\\
\log(\sigma n), &\tilde{\alpha}=2,\\
\frac{\sigma^{2-\tilde{\alpha}}}{2-\tilde{\alpha}}n^{2-\tilde{\alpha}}, &1<\tilde{\alpha}<2,
\end{cases}
\end{align*}
\begin{align*}
n^{1-\beta}
\int_{|x|\geq \sigma n}|x|^{\beta}\big(\big|d\frac{x\epsilon'(x)}{|x|}\big|+\big|d\frac{\epsilon(x)}{|x|}\big|\big)\leq  C_{\tilde{\alpha},\tilde{A}}n^{1-\tilde{\alpha}}.
\end{align*}
and
\begin{align*}
\mathcal{R}_{2,n}=0.
\end{align*}
Hence, we have when $\tilde{\alpha}\geq2,$
\begin{align*}
\dwb\big(S_n,Z\big)=O\big(n^{-1}(\log n)^{2}\big),
\end{align*}
and when $\tilde{\alpha}\in(1,2),$ we have
\begin{align*}
\dwb\big(S_n,Z\big)=O\big(n^{1-\tilde{\alpha}}\big).
\end{align*}
\qed
\medskip

{\it \underline{Example 3}: Cauchy approximation for the reciprocal of independent sums \cite{Dia}.  It is known that for $Z$ having the  Cauchy distribution $S_1(0)$, the following distributional identity holds:
\begin{align*}
Z\overset{d}= \frac 1	Z.
\end{align*}
We consider
\begin{align*}
T_{n}=\frac{\sigma n}{X_{1}+\cdots+X_{n}-n\mathbb{E}\big[X_{1}{\bf 1}_{(0,\sigma n)}(|X_{1}|)\big]},
\end{align*}
where $X_{1},\cdots,X_{n}$ are independent and identically distributed random variables and $X_{1}$ has a distribution of the form (\ref{dis}) with $\al=1, \de=0$.

Since
\begin{align*}
\mathbb{P}(T_{n}\leq x)-\mathbb{P}(Z\leq x)=
\begin{cases}
\mathbb{P}(Z\leq\frac{1}{x})-\mathbb{P}(S_{n}\leq\frac{1}{x}),\quad &x\neq0,\\
\mathbb{P}(S_{n}\leq x)-\mathbb{P}(Z\leq x),\quad &x=0,
\end{cases}
\end{align*}
we have by (\ref{kol}) that
\begin{align*}
\dko(T_n,Z)=\dko(S_n,Z)\leq \!C_{\alpha}\!\Big[\dwb\big(S_n,Z\big)\Big]^{\frac{1}{2}}.
\end{align*}
Therefore, $\dwb\big(S_n, Z\big)\rightarrow0$ implies $ \dko(S_n,Z)\to 0$.
In particular, if $X_1$ has the Pareto law as in Example 1, we have
\begin{align*}
\dko(T_n,Y)\leq C_{\alpha,\beta}n^{-\frac{1}{2}}\log n.
\end{align*}
If $X_1$ has the tail of the form \eqref{e:5.1} as in Example 2, we have
\begin{align*}
\dko(T_n,Y)=
\begin{cases}
O\big(n^{-\frac{1}{2}}\log n\big),\quad &\tilde{\alpha}\geq2,\\
O\big(n^{\frac{1-\tilde{\alpha}}{2}}\big),\quad &\tilde{\alpha}\in(1,2).
\end{cases}
\end{align*}
\qed
\medskip
\appendix

\section{Auxiliary results}


\subsection{Extending a result of Albevio et al.}\label{a:albevio}
Suppose that  $\E\mathcal A^{\al,\delta}f(Y)=0$ for all $f$ in the Schwarz space of smooth fast decaying functions. Let $\mathcal A$ be an operator defined by $\mathcal A f(x)=-x f'(x)+2 f''(x)$ for all $f$ in Schwarz space, $\mathcal A$ generates an OU process with an ergodic measure $N(0,1)$. Let $N$ be an $N(0,1)$-distributed random variable, then $\E \mathcal A f(N)=0$ and $Y+N$ has a density function $\rho$. Moreover,
$$\E\left[(\mathcal A^{\al,\delta}+\mathcal A) f(Y+N)\right]=\E\left[\mathcal A^{\al,\delta} f(Y+N)\right]+\E\left[\mathcal A f(Y+N)\right]=0,$$
Applying Parseval identity (that involves Fourier transform in the sense of distribution) as in the proof of \cite[Prop. 3.1]{ARW} to $\E\left[(\mathcal A^{\al,\delta}+\mathcal A) f(Y+N)\right]=0$, we obtain  the following:
\begin{align*}
(\log \E[e^{i\lambda (Z+N)}]) \widehat{\rho}(\lambda) =  i\lambda \frac{d}{d\lambda}\widehat{\rho}(\lambda)  \quad \mbox{a.e.},
\end{align*}
where $\hat \rho$ is is Fourier transform of $\rho$.
We see that the unique density function such that the above differential equation holds is that of $Z+N$, ending the proof.
\qed

\subsection{Stationarity of increments of the process $(V_t)_{t\ge 0}$}\label{a:V_si}

By \cite[Eq. (17.3)]{sato}, one has
\begin{align*}
\E[e^{i\lambda(V_t-V_s)}] = \exp \left[\int_{\log(1+s)}^{\log(1+t)} \psi(e^{\frac{u}{\al}}\lambda) du \right] = \exp[(t-s)\psi(\lambda)],
\end{align*}
where $\psi(\lambda)=\log \E[e^{i\lambda Z}]$, as desired.
\qed

\subsection{Solution to Stein's equation}\label{a:solve_Stein}

Before proving Lemma \ref{lm23}, we first give the following lemma:
\begin{lem}\label{A1}
Let $(Q_{t})_{t\geq0}$ be a Markovian semigroup with transition density  $q(t,x,y)=p(1-e^{-t},y-e^{-\frac{t}{\alpha}}x)$.  Then we have
\begin{align}
\partial_{t} Q_{t}h(x)=\mathcal{A}_{\al,\de}Q_{t}h(x).
\end{align}
for any $h\in\mathcal H_\be$ with $0<\be<\al$.
\end{lem}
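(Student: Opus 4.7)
My plan is to differentiate $Q_t h$ directly from its explicit integral representation. By \eqref{density} and the change of variables $y = e^{-t/\al}x + z$, one has
\[Q_t h(x) = \int_\RR p_{s(t)}(z)\,h(e^{-t/\al}x + z)\, dz = \Phi(s(t), e^{-t/\al}x),\]
where $s(t) = 1-e^{-t}$ and $\Phi(s,u) := \int_\RR p_s(z) h(u+z)\,dz$. The function $u\mapsto \Phi(s,u)$ is the semigroup at time $s$ of the L\'evy process $(Z_s)$, whose infinitesimal generator is exactly $\mathcal L^{\al,\de}$. The regularity of $p_s$ in Lemma \ref{p-prop} together with the growth estimate $|h(y)-\E[h(Z)]|\le C_\al(1+|y|^\be)$ from Lemma \ref{l:2.1} will justify all differentiations under the integral sign and all Fubini interchanges required below.

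Next, the chain rule yields
\[\partial_t Q_t h(x) = s'(t)\,\partial_s\Phi(s(t), e^{-t/\al}x) - \tfrac{1}{\al}e^{-t/\al}x\,\partial_u\Phi(s(t), e^{-t/\al}x).\]
Two facts enter at this point. First, by the semigroup property one has $\partial_s \Phi(s,u) = \mathcal L^{\al,\de}_u \Phi(s,u)$. Second, substituting $v=cu$ inside the integral defining $\mathcal L^{\al,\de}$ gives the scaling relation $\mathcal L^{\al,\de}_x[g(cx)] = c^\al(\mathcal L^{\al,\de}g)(cx)$ for any $c>0$ when $\al\in(0,1)$. Since $s'(t) = e^{-t} = (e^{-t/\al})^\al$, the first summand converts into $\mathcal L^{\al,\de}_x Q_t h(x)$. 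Since $\partial_x Q_t h(x) = e^{-t/\al}\partial_u\Phi(s(t), e^{-t/\al}x)$, the second summand equals $-\frac{x}{\al}\partial_x Q_t h(x)$. Adding the two terms gives $\partial_t Q_t h(x) = \mathcal A_{\al,\de} Q_t h(x)$, which is the claim.

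The main obstacle is the case $\al=1$, where the compensator $uf'(x)\one_{\{|u|\le 1\}}$ in the definition \eqref{op} of $\mathcal L^{1,0}$ interacts nontrivially with the substitution $v=cu$: the cutoff $\one_{\{|u|\le 1\}}$ becomes $\one_{\{|v|\le c\}}$, producing a residual linear contribution proportional to $\partial_u\Phi(s,cx)\int_\RR v\,(\one_{\{|v|\le c\}}-\one_{\{|v|\le 1\}})\,\nu_{1,0}(dv)$. This residual vanishes precisely because the assumption $\de=0$ renders $\nu_{1,0}$ symmetric, so the scaling identity $\mathcal L^{1,0}_x[g(cx)] = c\,(\mathcal L^{1,0}g)(cx)$ remains valid and the $\al<1$ argument transfers verbatim.
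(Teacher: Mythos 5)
Your proof is correct. The algebra checks out: writing $Q_t h(x) = \Phi(s(t), e^{-t/\al}x)$ with $\Phi(s,u)=\E[h(u+Z_s)]$, the chain rule produces $s'(t)\,\partial_s\Phi$ and $-\tfrac1\al e^{-t/\al}x\,\partial_u\Phi$; the backward equation $\partial_s\Phi = \mathcal L^{\al,\de}_u\Phi$ plus the scaling identity $\mathcal L^{\al,\de}_x[g(cx)]=c^\al(\mathcal L^{\al,\de}g)(cx)$ turn the first summand into $\mathcal L^{\al,\de}Q_t h(x)$ (using $s'(t)=e^{-t}=(e^{-t/\al})^\al$), and $\partial_x Q_t h(x)=e^{-t/\al}\partial_u\Phi$ turns the second into $-\tfrac{x}{\al}\partial_x Q_t h(x)$. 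You have also correctly identified that for $\al=1$ the truncation $\mathbf 1_{\{|u|\le1\}}$ in \eqref{op} destroys homogeneity, and that the residual term is an integral of an odd function against the symmetric measure $\nu_{1,0}$, which vanishes exactly because $\de=0$ is assumed in that case.

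The route is, however, genuinely different from the paper's. The paper works at the level of the kernel $q(t,x,y)$: it first establishes a quantitative pointwise bound on $|\partial_t q(t,x,y)|$ and a Lipschitz estimate on $t\mapsto q(t,x,y)$ (using the density heat equation $\partial_t p=\mathcal L^{\al,\de}p$ and the gradient estimates of Lemma \ref{p-prop}), uses these to justify dominated convergence, and then simply \emph{invokes} the backward Kolmogorov identity $\partial_t q(t,x,y)=\mathcal A_{\al,\de}q(t,\cdot,y)(x)$ from the SDE picture \eqref{e:operator_OU}--\eqref{e:OU}, followed by Fubini. You instead \emph{derive} that identity at the semigroup level: factoring $Q_t$ through the L\'evy semigroup $\Phi$ avoids any ambiguity about whether the spatial generator acting on $q(t,\cdot,y)$ introduces an adjoint or a $\de\mapsto-\de$ reflection, and reduces the Kolmogorov equation to the two elementary facts (chain rule and scaling). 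Your version is therefore more self-contained and explicit on the generator side, and it is the only one of the two that spells out why the $\al=1$ compensator is harmless. Conversely, the paper is more careful on the analytic side: the bound on $|\partial_t q|$ and the resulting control of $|q(t+s,x,y)-q(t,x,y)|$ are exactly what makes ``differentiation under the integral sign'' legitimate for all $t>0$, whereas your proposal only asserts that Lemmas \ref{p-prop} and \ref{l:2.1} ``will justify'' these interchanges without exhibiting the dominating function. If you add that quantitative bound — essentially the first display of the paper's proof — your argument becomes fully rigorous.
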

\begin{proof}
Recall that $q(t,x,y)=p(1-e^{-t},y-e^{-\frac{t}{\alpha}}x).$ Then
\begin{align*}
  \left|\frac{\partial}{\partial t}q(t,x,y)\right| &= \left|e^{-t}\frac{\partial p}{\partial t}(1-e^{-t},y-e^{-\frac{t}{\alpha}}x)+\al^{-1}e^{-\frac{t}{\alpha}}\frac{\partial p}{\partial x}(1-e^{-t},y-e^{-\frac{t}{\alpha}}x)\right|\\
   & \le \frac{C_\alpha}{((1-e^{-t})^{1/\al}+|y-e^{-\frac{t}{\alpha}}x|)^{1+\al}}
   +\frac{1}{\al}e^{-\frac{t}{\alpha}}\frac{C_\alpha(1-e^{-t})^{(\al-1)/\al}}{((1-e^{-t})^{1/\al}+|y-e^{-\frac{t}{\alpha}}x|)^{1+\al}}\\
   &\le \frac{C_\al(1+(e^t-1)^{-1/\al})}{((1-e^{-t})^{1/\al}+|y-e^{-\frac{t}{\alpha}}x|)^{1+\al}}
\end{align*}
where the second inequality above follows from $\frac{\partial p}{\partial t}(t,x)=\mathcal{L}^{\al,\de}p(t,x)$ and \cite[(3.2)]{C-Z}.
Thus, for $t>0$, $s>0$ small enough such that $(1-e^{-s/\al})|x|\le \frac{1}{2}(e^t-1)^{1/\al},$
\begin{align*}
  |q(t+s,x,y)-q(t,x,y)| & \le s \frac{C_\al 2^{1+\al}(1+(e^t-1)^{-1/\al})}{((1-e^{-t})^{1/\al}+|y-e^{-\frac{t}{\alpha}}x|)^{1+\al}}.
\end{align*}

In addition, according to (\ref{e:operator_OU}) and (\ref{e:OU}), we have
\begin{align}\label{heat}
\partial_{t}q(t,x,y)=\mathcal{A}_{\al,\de}q(t,x,y).
\end{align}
Hence, using dominated convergence theorem, (\ref{heat}) and Fubini's theorem, we have
\begin{align*}
\partial_{t} Q_{t}h(x)&=\partial_{t}\int_{\mathbb{R}}q(t,x,y)h(y)dy=\int_{\mathbb{R}}\partial_{t}q(t,x,y)h(y)dy\\
&=\int_{\mathbb{R}}\mathcal{A}_{\al,\de}q(t,x,y)h(y)dy=\mathcal{A}_{\al,\de}\int_{\mathbb{R}}q(t,x,y)h(y)dy=\mathcal{A}_{\al,\de}Q_{t}h(x),
\end{align*}
the desired conclusion follows.
\end{proof}
\noindent
{\it Proof of Lemma \ref{lm23}}.
\begin{align}\label{rep}
Q_{t}h(x)=\int_{\mathbb{R}}p(1-e^{-t},y-e^{-\frac{t}{\alpha}}x)h(y)dy=\int_{\mathbb{R}}p(1,y)h\big((1-e^{-t})^{\frac{1}{\alpha}}y+e^{-\frac{t}{\alpha}}x\big)dy,
\end{align}
and
\begin{align*}
\Big|h\big((1-e^{-t})^{\frac{1}{\alpha}}y+e^{-\frac{t}{\alpha}}(x+z)\big)-h\big((1-e^{-t})^{\frac{1}{\alpha}}y+e^{-\frac{t}{\alpha}}x\big)\Big|\leq e^{-\frac{\beta t}{\alpha}}(|z|\wedge|z|^{\beta}).
\end{align*}
By (\ref{rep}), we immediately have
\begin{align}\label{ineq}
\Big|Q_{t}h(x+z)-Q_{t}h(x)\Big|\leq \int_{\mathbb{R}}p(1,y)e^{-\frac{\beta t}{\alpha}}(|z|\wedge|z|^{\beta})dy=e^{-\frac{\beta t}{\alpha}}(|z|\wedge|z|^{\beta}).
\end{align} Recall $\mathcal{A}_{\al,\de}f(x)=\mathcal{L}^{\alpha,\delta}f(x)-\frac{1}{\alpha}xf'(x).$
By \eqref{ineq}, using the dominated convergence theorem, we get that
$$f'(x)=-\int_0^\infty \frac{\partial }{\partial x}Q_th(x)\,dt.$$
When $\alpha\in(0,1),$ we have
\begin{align*}
\mathcal{L}^{\alpha,\delta}f(x)=-\int_{-\infty}^{\infty}\int_{0}^{\infty}\big(Q_{t}h(x+z)-Q_{t}h(x)\big)dt\nu_{\alpha,\delta}(dz)
\end{align*}
and we have by (\ref{ineq})
\begin{align*}
\int_{-\infty}^{\infty}\int_{0}^{\infty}\big|Q_{t}h(x+z)-Q_{t}h(x)\big|dt\nu_{\alpha,\delta}(dz)&\leq C_{\alpha}\int_{-\infty}^{\infty}\int_{0}^{\infty}\frac{|Q_{t}h(x+z)-Q_{t}h(x)|}{|z|^{\alpha+1}}dtdz\\
&\leq C_{\alpha}\int_{0}^{\infty}e^{-\frac{\beta t}{\alpha}}dt\int_{-\infty}^{\infty}\frac{|z|\wedge|z|^{\beta}}{|z|^{\alpha+1}}\leq C_{\alpha,\beta}.
\end{align*}
When $\alpha=1$ and $\delta=0,$ we have
\begin{align*}
\mathcal{L}^{\alpha,\delta}f(x)&=\int_{-\infty}^{\infty}\int_{0}^{\infty}\big(Q_{t}h(x+z)-Q_{t}h(x)-z{\bf 1}_{(-1,1)}(z)\frac{d}{dx}Q_{t}h(x)\big)dt\nu_{\alpha,\delta}(dz)\\
&=\int_{-1}^{1}\int_{0}^{\infty}\int_{0}^{1}z\big(\frac{d}{dx}Q_{t}h(x+zs)-\frac{d}{dx}Q_{t}h(x)\big)dsdt\nu_{\alpha,\delta}(dz)\\
&\quad+\int_{|z|\geq1}\int_{0}^{\infty}\big(Q_{t}h(x+z)-Q_{t}h(x)\big)dt\nu_{\alpha,\delta}(dz)
\end{align*}
and by integration by parts,
\begin{align*}
&\Big|\frac{d}{dx}Q_{t}h(x+zs)-\frac{d}{dx}Q_{t}h(x)\Big|\\
\leq& e^{-t}\int_{\mathbb{R}}\Big|p\big(y-(1-e^{-t})^{-1}e^{-t}(x+zs)\big)-p\big(y-(1-e^{-t})^{-1}e^{-t}x\big)\Big|\big|h'\big((1-e^{-t})y\big)\big|dy\\
\leq&C e^{-t}\big((1-e^{-t})^{-1}e^{-t}|zs|\wedge1\big),
\end{align*}
these imply
\begin{align*}
&\int_{-\infty}^{\infty}\int_{0}^{\infty}\big|Q_{t}h(x+z)-Q_{t}h(x)-z{\bf 1}_{(-1,1)}(z)\frac{d}{dx}Q_{t}h(x)\big|dt\nu_{\alpha,\delta}(dz)\\
\leq& C\int_{-1}^{1}\int_{0}^{\infty}\frac{e^{-t}\big((1-e^{-t})^{-1}e^{-t}|z|\wedge1\big)}{|z|}dtdz+\int_{|z|\geq1}\int_{0}^{\infty}\frac{e^{-\frac{\beta t}{\alpha}}|z|^{\beta}}{|z|^{2}}dtdz\leq C_{\beta}.
\end{align*}
Thus, by Fubini's theorem, we get that, for $\al\in(0,1]$,
\begin{align*}
\mathcal{L}^{\alpha,\delta}f(x)=-\int_{0}^{\infty}\mathcal{L}^{\alpha,\delta}Q_{t}h(x)dt.
\end{align*}
Hence, according to Lemma \ref{A1}, we can obtain
\begin{align*}
\mathcal{A_{\alpha,\delta}}f=-\int_{0}^{\infty}\mathcal{A}_{\alpha,\delta}Q_{t}hdt=-\int_{0}^{\infty}\partial_{t}Q_{t}hdt=Q_{0}h-Q_{\infty}h,
\end{align*}
here $Q_{\infty}=\mu,$ the unique invariant distribution of the semigroup $(Q_t)_{t\ge 0}$ associated with $\mathcal A_{\alpha,\delta}$ by \cite[Cor. 17.9]{sato}. The proof is complete.
\section{Example 4: Regularly varying tails \cite{JuPa98}}\label{s:rv}
Assume that $X_1,X_2,\cdots$ be i.i.d. with a common density function $p_X(x)$
\[
p_X(x)=\frac{\alpha^2e^\alpha}{2(1+\alpha)}\,\frac{\log |x|}{|x|^{\alpha+1}}{\bf 1}_{[e,\infty)}(|x|), \quad\mbox{with $\alpha\in(0,1]$}.
\]
Then, we can prove
\begin{equation}\label{example}
\dwb\left(\frac1{\sigma\,\gamma_n}(X_1+\ldots+X_n),Z\right)=O\big((\log n)^{-1}\big).
\end{equation}

According to \cite[Theorem 3.7.2]{Dur10}, we define the sequence $(\gamma_n)_{n\geq 1}$ implicitly by $\gamma_n=\big(n\log \gamma_n\big)^\frac1\alpha$.
Observe that $X_1$ is integrable and centered.
We set $\sigma =\left(\frac{\alpha^2e^\alpha}{(1+\alpha)d_\alpha}\right)^{\frac1\alpha}$, $\widetilde{X}_i=\frac{n^\frac1\alpha}{\gamma_n}X_i$, $\widetilde{S}_n=\frac1\sigma\,n^{-\frac1\alpha}(\widetilde{X}_1+\ldots+\widetilde{X}_n)$, and
$\widetilde{S}_{n,i}=\widetilde{S}_n-\frac{n^{-\frac1\alpha}}{\sigma}\widetilde{X}_i$.

To prove (\ref{example}), we shall use Theorem \ref{thm1} with $\delta=0$.
Let $f$ be defined as above. We can write
\begin{eqnarray*}
&&\E[\widetilde{S}_nf'(\widetilde{S}_n)]-\alpha\,\E\big[
(\mathcal{L}^{\alpha,0}f)(\widetilde{S}_n)\big]\\
&=&\frac\alpha n \sum_{i=1}^n \left(
\E\big[
(\mathcal{L}^{\alpha,0}f)(\widetilde{S}_{n,i})\big]
-\E\big[
(\mathcal{L}^{\alpha,0}f)(\widetilde{S}_n)\big]
\right)\\
&&+\frac{n^{-\frac1\alpha}}{\sigma}\sum_{i=1}^n
\left(
\E\left[
\widetilde{X}_i\,f'(\widetilde{S}_{n,i}+\frac{n^{-\frac1\alpha}}{\sigma}\widetilde{X}_i)
\right]
-\frac{\alpha^3e^\alpha}{d_\alpha(1+\alpha)}\sigma^{1-\alpha}n^{\frac1\alpha-1}\,\E\big[
(\mathcal{L}^{\alpha,0}f)(\widetilde{S}_{n,i})\big]
\right).
\end{eqnarray*}
We have, using among other that $n\gamma_n^{-\alpha}=\frac1{\log \gamma_n}$,
\begin{eqnarray*}
&&\E\left[
\widetilde{X}_i\,f'(\widetilde{S}_{n,i}+\frac{n^{-\frac1\alpha}}{\sigma}\widetilde{X}_i)\right]\\
&=&\frac{\alpha^2e^\alpha}{2(1+\alpha)}\,\E\left[\int_\R \big(f'(\widetilde{S}_{n,i}+\frac{n^{-\frac1\alpha}}{\sigma}u)-f'(\widetilde{S}_{n,i})\big)
\frac{
u\log(
n^{-\frac1\alpha} \gamma_n\big|u|
)
}
{
|u|^{\alpha +1}\log \gamma_n
}\,
{\bf 1}_{[e,\infty)}(n^{-\frac1\alpha}\gamma_n\,|u|)\,
du\right]\\
&=&\frac{\alpha^2e^\alpha}{2(1+\alpha)}\,\E\left[\int_\R \big(f'(\widetilde{S}_{n,i}+\frac{n^{-\frac1\alpha}}{\sigma}u)-\phi'(\widetilde{S}_{n,i})\big)
\frac{
u
}
{
|u|^{\alpha +1}
}\,
{\bf 1}_{[e,\infty)}(n^{-\frac1\alpha}\gamma_n\,|u|)\,
du\right]\\
&&+\frac{\alpha^2e^\alpha}{2(1+\alpha)}\,\E\left[\int_\R \big(f'(\widetilde{S}_{n,i}+\frac{n^{-\frac1\alpha}}{\sigma}u)-f'(\widetilde{S}_{n,i})\big)
\frac{
u\log(
n^{-\frac1\alpha} \big|u|
)
}
{
|u|^{\alpha +1}\log \gamma_n
}\,
{\bf 1}_{[e,\infty)}(n^{-\frac1\alpha}\gamma_n\,|u|)\,
du\right].
\end{eqnarray*}
On the other hand, the Proposition \ref{properties} with $a=\frac{n^{-\frac1\alpha}}{\sigma}$ yields
\begin{eqnarray*}
\frac{2\alpha}{d_\alpha}\,\sigma^{1-\alpha}n^{\frac1\alpha-1}(\mathcal{L}^{\alpha,0}f)(\widetilde{S}_{n,i})
&=&\int_\R
\big(f'(\widetilde{S}_{n,i}+\frac{n^{-\frac1\alpha}}{\sigma}u)-f'(\widetilde{S}_{n,i})\big)\frac{u}{|u|^{\alpha +1}}du.
\end{eqnarray*}
As a result,
\begin{eqnarray*}
&&\left|\E\left[
\widetilde{X}_i\,f'(\widetilde{S}_{n,i}+\frac{n^{-\frac1\alpha}}{\sigma}\widetilde{X}_i)\right]- \frac{\alpha^3e^\alpha}{d_\alpha(1+\alpha)}\,\sigma^{1-\alpha}\,n^{\frac1\alpha-1}\,\E\left[(\mathcal{L}^{\alpha,0}f)(\widetilde{S}_{n,i})\right]\right|\\
&=&\left|\frac{\alpha^2e^\alpha}{2(1+\alpha)}\,\E\left[\int_\R \big(f'(\widetilde{S}_{n,i}+\frac{n^{-\frac1\alpha}}{\sigma}u)-f'(\widetilde{S}_{n,i})\big)
\frac{
u
}
{
|u|^{\alpha +1}
}\,
{\bf 1}_{(0,e)}(n^{-\frac1\alpha}\gamma_n\,|u|)\,
du\right]\right.\\
&&\left.+\frac{\alpha^2e^\alpha}{2(1+\alpha)}\,\E\left[\int_\R \big(f'(\widetilde{S}_{n,i}+\frac{n^{-\frac1\alpha}}{\sigma}u)-f'(\widetilde{S}_{n,i})\big)
\frac{
u\log(
n^{-\frac1\alpha} \big|u|
)
}
{
|u|^{\alpha +1}\log \gamma_n
}\,
{\bf 1}_{[e,\infty)}(n^{-\frac1\alpha}\gamma_n\,|u|)\,
du\right]\right|\\
&:=&\mathbf{I}+\mathbf{II}.
\end{eqnarray*}
Then, when $\alpha\in(0,1),$ we have by (\ref{e:HolF'})
\begin{align*}
\mathbf{I}\leq C_{\alpha}n^{\frac{1}{\alpha}-1}\gamma_{n}^{-1}.
\end{align*}
By (\ref{e:HolF'}), (\ref{stablestein}) and (\ref{e:FraFBou}), we have
\begin{align*}
\mathbf{II}\leq&C_{\alpha,\beta}\Big(\frac{n^{-1}}{\log\gamma_{n}}\int_{\frac{en^{\frac{1}{\alpha}}}{\gamma_{n}}}^{n^{\frac{1}{\alpha}}}|\log(n^{-\frac{1}{\alpha}}u)|\dif u+\frac{n^{\frac{1}{\alpha}}}{\log\gamma_{n}}\int_{n^{\frac{1}{\alpha}}}^{\infty}
\frac{1+n^{-\frac{\beta}{\alpha}}|u|^{\beta}}{|u|^{\alpha}}\dif u\Big)\\
\leq&C_{\alpha,\beta}\frac{n^{\frac{1}{\alpha}-1}}{\log\gamma_{n}}\Big(\int_{\frac{e}{\gamma_{n}}}^{1}|\log(v)|\dif u+\int_{1}^{\infty}
\frac{1+|v|^{\beta}}{|v|^{\alpha}}\dif u\Big)=O\big(n^{\frac{1}{\alpha}-1}(\log\gamma_{n})^{-1}\big).
\end{align*}
When $\alpha=1,$ we have by the same argument as the case $\alpha\in(0,1)$ with (\ref{e:HolF'}) and (\ref{e:FraFBou}) replaced by (\ref{e:holdf'c}) and (\ref{e:FraFBouc}), respectively,
\begin{align*}
\mathbf{I}=O\big(\gamma_{n}^{-1}\log \gamma_{n}\big),\quad \mathbf{II}=O\big((\log\gamma_{n})^{-1}\big).
\end{align*}
On the other hand, when $\alpha\in(0,1),$ we have by (\ref{e:FraFBou}) and (\ref{regularity1}),
\begin{eqnarray*}
\left|
\E\big[
(\mathcal{L}^{\alpha,0}f)(\widetilde{S}_{n,i})\big]
-\E\big[
(\mathcal{L}^{\alpha,0}f)(\widetilde{S}_n)\big]
\right|=O(n^{-1}).
\end{eqnarray*}
When $\alpha=1,$ we have by (\ref{e:FraFBouc}) and (\ref{e:FraFHolc}),
\begin{eqnarray*}
\left|
\E\big[
(\mathcal{L}^{1,0}f)(\widetilde{S}_{n,i})\big]
-\E\big[
(\mathcal{L}^{1,0}f)(\widetilde{S}_n)\big]
\right|=O\big(n^{-1}(\log n)^{2}\big).
\end{eqnarray*}
Putting everything together, we get that
\[
d_W(\widetilde{S}_n,Z)=O\big((\log\gamma_{n})^{-1})=O((\log n)^{-1}\big),
\]
which is the desired conclusion.\qed

\section{Proof of Corollary \ref{c:SCLT}}
Assume $h_{0}\in\mathcal H_\be$ and $h_{0}:\mathbb{R}\rightarrow[0,1]$ such that $h_{0}(s)=1$ for $s\leq0,$ $h_{0}(s)=0$ for $s\geq1.$ Fix any $x\in\mathbb{R},$ and define $h(s)=\frac{1}{\psi}h_{0}\big(\psi(s-x)\big)$ for some $\psi\geq1.$ Then, for any $y,z\in\mathbb{R},$ we have
\begin{align*}
|h(y)-h(z)|=\frac{1}{\psi}|h_{0}\big(\psi(y-x)\big)-h_{0}\big(\psi(z-x)\big)|&\leq\frac{1}{\psi}\big(\psi^{\beta}|y-z|^{\beta}\wedge\psi|y-z|\big)\\
&\leq|y-z|^{\beta}\wedge|y-z|,
\end{align*}
which implies $h\in\mathcal H_\be.$ Then,
\begin{align*}
\mathbb{P}(S_{n}\leq x)&\leq\mathbb{E}\big[h_{0}\big(\psi(S_{n}-x)\big)\big]=\psi\mathbb{E}\big[h(S_{n})\big]\\
&\leq\psi\mathbb{E}\big[h(Z)\big]+\psi \dwb\big(S_n,Z\big)\\
&\leq\mathbb{P}(Z\leq x+\frac{1}{\psi})+\psi \dwb\big(S_n,Z\big).
\end{align*}
Furthermore, by boundedness of the stable densities,
$
\mathbb{P}(Z\leq x+\frac{1}{\psi}) - \PP(Z\le x)\leq C_{\alpha}\frac{1}{\psi},
$
which implies
\begin{align*}
\mathbb{P}(S_{n}\leq x)-\mathbb{P}(Z\leq x)\leq\psi \dwb\big(S_n,Z\big)+C_{\alpha}\frac{1}{\psi}.
\end{align*}
Hence, we take $\psi=\big[\dwb\big(S_n,Z\big)\big]^{-\frac{1}{2}},$
\begin{align*}
\mathbb{P}(S_{n}\leq x)-\mathbb{P}(Z\leq x)\leq C_{\alpha}\big[\dwb\big(S_n,Z\big)\big]^{\frac{1}{2}}.
\end{align*}
This gives one half of the claim. The other half follows similarly.

\bibliographystyle{amsplain}

\end{document}